\documentclass[11pt]{article}
\usepackage{amsfonts,mathrsfs}
\usepackage{amssymb}
\usepackage{amsmath}
\usepackage{graphicx}
\usepackage{hyperref}
\usepackage{color}
=0pt
\def\sqr#1#2{{\vcenter{\vbox{\hrule height.#2pt
              \hbox{\vrule width.#2pt height#1pt \kern#1pt \vrule
width.#2pt}
              \hrule height.#2pt}}}}
\def\signed #1{{\unskip\nobreak\hfil\penalty50
              \hskip2em\hbox{}\nobreak\hfil#1
              \parfillskip=0pt \finalhyphendemerits=0 \par}}
\def\endpf{\signed {$\sqr69$}}
\def\dbE{{\mathbb{E}}}
\def\dbF{{\mathbb{F}}}

\def\dbR{{\mathbb{R}}}

\def\e{\varepsilon}

\def\t{\tau}
\def\f{\varphi}

\def\3n{\negthinspace \negthinspace \negthinspace }
\def\2n{\negthinspace \negthinspace }
\def\1n{\negthinspace }
\def\ns{\noalign{\smallskip} }

\def\ds{\displaystyle}
\def\D{\Delta}

\def\Om{\Omega}

\def\cC{{\cal C}}

\def\cF{{\cal F}}

\def\cJ{{\cal J}}

\def\cL{{\cal L}}

\def\cP{{\cal P}}

\def\cU{{\cal U}}

\def\cW{{\cal W}}
\def\cX{{\cal X}}

\def\mE{{\mathbb{E}}}

\def\no{\noindent}

\def\ss{\smallskip}

\def\bs{\bigskip}
\def\q{\quad}
\def\qq{\qquad}
\def\hb{\hbox}

\def\lan{\big\langle}
\def\ran{\big\rangle}

\def\pa{\partial}

\def\wt{\widetilde}
\def\cd{\cdot}

\def\ae{\hbox{\rm a.e.{ }}}

\def\deq{\mathop{\buildrel\D\over=}}

\def\({\Big (}
\def\){\Big )}
\def\[{\Big[}
\def\]{\Big]}

\def\={\buildrel \triangle \over =}

\newtheorem{lemma}{Lemma}[section]
\newtheorem{remark}{Remark}[section]

\newtheorem{theorem}{Theorem}[section]
\newtheorem{corollary}{Corollary}[section]

\newtheorem{definition}{Definition}[section]
\newtheorem{proposition}{Proposition}[section]

\allowdisplaybreaks
 \makeatletter
   
   \@addtoreset{equation}{section}
\makeatother

\title{Transposition Approach to Optimal Control of McKean-Vlasov SPDEs}

\author{Liangying Chen\footnote{Technische Universit\"{a}t Berlin and Freie Universit\"{a}t Berlin, Berlin, Germany, Email: liangying.chen@fu-berlin.de}
~~~ and ~~~ Wilhelm Stannat\footnote{Technische Universit\"{a}t Berlin, Berlin, Germany, Email: stannat@math.tu-berlin.de}}
\date{}

\begin{document}

\maketitle

\begin{abstract}

In this paper, we investigate an optimal control problem for McKean–Vlasov stochastic partial differential equations, in which the coefficients depend on the law of the state process. For systems with nonconvex control sets, we establish a Pontryagin-type stochastic maximum principle that provides necessary optimality conditions for admissible controls. The analysis is based on the classical spike variation method together with the introduction of an adjoint backward stochastic partial differential equation involving Lions derivatives with respect to probability measures. Our results extend the stochastic maximum principle for McKean-Vlasov controlled stochastic differential equations to the infinite-dimensional SPDE setting. 

\end{abstract}

\bs

\no{\bf 2010 Mathematics Subject
Classification}. 93E20, 60H15

\bs

\no{\bf Key Words:} Pontryagin type maximum principle, Stochastic optimal control, McKean-Vlasov equation, Transposition solution, Stochastic distributed
parameter systems

\section{Introduction}

We consider the optimal control of the following semilinear McKean–Vlasov stochastic evolution equation 
\begin{equation}
\label{system0}
\begin{cases}
dX(t) = AX(t)dt+a(t,X(t),\mathcal L(X(t)), u(t))dt+b(t,X(t),\mathcal L(X(t)),u(t))dW(t),\\ \hfill t\in (0,T] \\
X(0) = \xi \in L^2_{\mathcal F_0}(\Omega;H).
\end{cases}    
\end{equation}
on a separable real Hilbert space $H$ and with cost functional  
\begin{equation}
\label{cost1}
\mathcal{J}(u(\cdot)) 
= \mathbb{E}\Big(\int_0^T f(t,X(t),\mathcal L(X(t)),u(t))dt+h(X(T),\mathcal L(X(T)))\Big).
\end{equation}
In \eqref{system0} $A$ is the generator of a $C_0$-semigroup $\{S(t)\}_{t\ge 0}$ on $H$, 
$\{W(t)\}_{t\ge 0}$ is a cylindrical Wiener process, defined an a complete filtered 
probability space $(\Omega , \mathcal{F}, \mathbb F\triangleq \{\cF_t\}_{t\geq 0} ,
\mathbb{P})$, with $\mathbb{F}$ satisfying the usual conditions. 
$\{W(t)\}_{t\ge 0}$ takes values in a possibly different separable real Hilbert space 
$\tilde{H}$, $a$ and $b$ denote suitable coefficients, whose precise assumptions will be 
specified below, and $\mathcal{L}(X(t))$ denotes the law of $X(t)$. The controls $u(t)$ 
take values in a (not necessarily convex) subset $U$ of a separable metric space and they 
are adapted to $\mathbb{F}$.

Our goal will be to establish a Pontryagin's type maximum principle for the optimal control 
problem  
\textbf{Problem} $\boldsymbol{(S)}$.  For any given $\xi\in L_{\mathcal F_0}^2 
(\Omega ; H)$, find a $\bar{u}(\cdot)\in \mathcal{U}[0,T]$ such that \vspace{-1mm}
\begin{equation}\label{OP2}
    \mathcal{J}(\bar{u}(\cdot))=\inf\limits_{u(\cdot)\in \mathcal{U}[0,T]}\mathcal{J}(u(\cdot)).\vspace{-1mm}
\end{equation}
where the set of admissible controls is defined as 
\begin{equation*}
\mathcal{U}[0,T]\triangleq\big\{u:[0,T]\times\Omega\to U\big| u \mbox{ is
$\mathbb{F}$-adapted}\big\}.
\end{equation*}
Any $\bar{u}(\cdot)\in \mathcal{U}[0,T]$ satisfying \eqref{OP2} is called an {\it optimal control} (of \textbf{Problem} $\boldsymbol{(S)}$). The corresponding state
$\overline{X}(\cdot)$ is called an {\it optimal state}, and
$(\overline{X}(\cdot),\bar{u}(\cdot))$ is called an {\it optimal pair}.

The study of optimal control problems for McKean–Vlasov dynamics is motivated by the analysis of the asymptotic behavior of large-population stochastic systems with mean-field interactions, such as those arising in models of financial markets, firms, and interacting agents \cite{Carmona2018}. In such settings, the state dynamics of each individual depend not only on its own state and control but also on the distribution of the entire population. Due to their broad range of applications, they have attracted considerable attention in recent years.

Several approaches have been developed to address the optimal control of McKean–Vlasov dynamics. The approach of particular interest in this paper is the Pontryagin maximum principle, which was originally established in the early 1950s by Pontryagin and his collaborators \cite{Pontryagin1962} for deterministic optimal control problems.

The literature on optimal control problems for McKean–Vlasov stochastic differential equations is relatively well developed. In particular, Pontryagin-type maximum principles for McKean–Vlasov control problems with convex control sets have been established in several works; see, e.g., \cite{Andersson2011, Carmona2015, Carmona2018, Hocquet2021}. Subsequently, by extending Peng’s approach \cite{Peng1990}, these results were generalized to the case of nonconvex control domains in \cite{Buckdahn2011, Buckdahn2016, Buckdahn2017, Spille2026}.  

In contrast, the literature on infinite-dimensional McKean–Vlasov control problems remains rather limited. Existing works mainly focus on specific settings. For instance, \cite{Ahmed2016} studies such problems under relaxed controls, while \cite{Dumitrescu2018} considers the case of partial information. Results for convex control domains have been obtained in \cite{Spille2025, Tang2019}. However, to the best of our knowledge, there is no existing work addressing optimal control problems for McKean–Vlasov stochastic partial differential equations with general nonconvex control domains. 

The generalization to infinite-dimensional nonconvex control setting faces several obstacles. First, the proof of Pontryagin's maximum principle in the finite dimensional case given in [5], based on the introduction of a stochastic exponential process and the application of the Martingale representation theorem to discuss the order of certain remainders in the Taylor expansion of the controlled state equations and the cost functional, cannot be carried over to the infinite dimensional case. 
This is the case because in the infinite dimensional setting the corresponding stochastic exponential process will not exist because of the unboundedness of the 
linear operator $A$, and the Martingale Representation Theorem would have to be applied to an operator-valued stochastic process, for which there's no suitable formulation available.
Nevertheless, inspired by \cite{Guatteri2025}, where delay equations are studied, we adopt an alternative approach based on introducing a backward stochastic evolution equation (BSEE). This allows us to derive the desired estimate without relying on stochastic exponentials. We refer to Proposition \ref{prop4.5-1} for a detailed discussion.

Second, as already observed in the literature on optimal control of stochastic partial differential equations (SPDEs); see, e.g., \cite{Chen2023, Wessels2025, Fabbri2017, Fuhrman2013, Lu2014, Lu2021, Stannat2021, Stannat2024}, the second-order adjoint state appearing in Pontryagin-type maximum principles is a BSEE taking values in the space 
$\mathcal L(H)$ of bounded linear operators on $H$. This issue has been addressed in several recent works, see \cite{Lu2014, Lu2015, Lu2021} and \cite{Stannat2021}.

To treat the first- and second-order adjoint equations, we adopt the transposition approach introduced in \cite{Lu2021}. In the McKean–Vlasov SPDE setting, the first-order adjoint equation takes the form of a McKean–Vlasov BSEE, whose well-posedness is established in Section 4.
The second-order adjoint equation, by contrast, remains a standard operator-valued BSEE. Its well-posedness has already been proved in \cite{Lu2021}. Nevertheless, due to the technical complexity and relatively recent development of this concept, we include its definition in our paper for completeness and ease of reference.

The third major obstacle to overcome is the definition of the Lions derivatives of the 
coefficients $a$, $b$, $f$ and $h$ of the state equation and the cost functional needed 
in the formulation of Pontryagin's maximum principle. To this end we will use the recent 
progress in \cite{Stannat2025} on the Lions derivative for mappings taking values in 
infinite dimensional state spaces. 

As it is necessary in the stochastic case to consider a second order Taylor expansion of 
the cost functional, due to the unbounded variation of the driving Wiener process, this 
also involves second order derivatives of the coefficients involving $L$-derivatives up to 
second order. It was a crucial observation in \cite{Buckdahn2016}
that in the formulation of Pontraygin's maximum principle for optimal controls of 
McKean-Vlasov stochastic differential equations, only second order derivatives of the 
coefficients of the type $\partial_{xx}\phi$ and $\partial_{y\mu} \phi$ appear.  
Derivatives of the type $\partial_{x\mu}\phi$ and $\partial_{\mu\mu}\phi$ can be 
neglected since they involve certain (conditional) expectations smoothing the unbounded 
variation in the underlying controlled stochastic evolution process. This observation can be also verified in the present infinite dimensional setting. The corresponding statement 
is formulated in Proposition \ref{prop4.5-1} and Corollary \ref{cor3.1} below.

 In summary, this is the first work to study optimal control problems for infinite dimensional McKean-Vlasov SPDEs with nonconvex control domains. Our approach is based on the transposition method combined with Lions derivatives for coefficients taking values in infinite dimensions. This allows us to work under a general filtration, rather than restricting to the natural filtration, thereby a broader class of stochastic systems. Within this framework, we establish a Pontryagin-type maximum principle for McKean–Vlasov SPDEs with nonconvex control domains, where the control enters the diffusion term and the filtration is general, a setting which has not been previously treated in the infinite-dimensional case.

 The rest of the paper is organized as follows: In Section 2 we formulate precise assumptions on \eqref{system0} and \eqref{cost1}. We state the first and second-order adjoint states and formulate our main result on Pontryagin maximum principle in Theorem \ref{thm4.1}. Section 3 states and further analyzes first and second order variational equations needed for the Taylor expansion of the cost functional. 
Section 4 deals with well-posedness of the adjoint states within the theory of (relaxed) transposition solutions. Section 5 provides the proof of Theorem \ref{thm4.1}. The final Section 6 provides an illustrative example to our McKean-Vlasov SPDE control problem.



\section{Mathematical setting and main result}
Let us first introduce some notations that will be used in our paper.

Let $\cX$ be a Banach space. For any $t\in[0,T]$ and $p\in
[1,\infty)$, denote by $L_{\cF_t}^p(\Om;\cX)$ the Banach space of
all $\cF_t$-measurable random variables $\xi:\Om\to \cX$ such that
$\mathbb{E}|\xi|_\cX^p < \infty$, with the canonical  norm. Denote
by $L^{p}_{\dbF}(\Om;C([t,T];\cX))$ the Banach space of all
$\cX$-valued $\mathbb{F}$-adapted continuous processes
$\phi(\cdot)$, with the norm\vspace{-3mm}
$$
|\phi(\cd)|_{L^{p}_{\dbF}(\Om;C([t,T];\cX))} \=
\Big[\mE\sup_{\tau\in
[t,T]}|\phi(\tau)|_\cX^p\Big]^{1/p}.  $$
Also, denote by $C_{\dbF}([t,T];L^{p}(\Om;\cX))$ the Banach space of
all $\cX$-valued $\mathbb{F}$-adapted processes $\phi(\cdot)$ such
that $\phi(\cdot):[t,T] \to L^{p}_{\cF_T}(\Om;\cX)$ is continuous,
with the norm
$$
|\phi(\cd)|_{C_{\dbF}([t,T];L^{p}(\Om;\cX))} \= \sup_{\tau\in
[t,T]}\left[\mE|\phi(\tau)|_\cX^p\right]^{1/p}.
$$
Write $D_{\dbF}([0,T];L^{r}(\Om;\cX))$ for the vector space of all
$\cX$-valued, $r$th power integrable $\dbF$-adapted processes
$\phi(\cdot)$ such that $\phi(\cdot):[0,T] \to
L^{r}_{\cF_T}(\Omega;\cX)$ is c\`adl\`ag, i.e., right
continuous with left limits. Clearly,
$D_{\dbF}([0,T];L^{r}(\Omega;\cX))$ is a Banach space with the norm:
$$
|\phi(\cd)|_{D_{\dbF}([0,T];L^{r}(\Omega;\cX))} = \sup_{t\in
[0,T)}\left(\mE|\phi(r)|_\cX^r\right)^{1/r}.
$$
Fix any $p_1,p_2, p_3, p_4\in[1,\infty]$.
Put
$$
\begin{array}{ll}
\ds L^{p_1}_\dbF(\Om;L^{p_2}(t,T;\cX)) =\Big\{\f:(t,T)\times\Om\to
\cX\;\Big|\;\f(\cd)\hb{
is $\mathbb{F}$-adapted and }\\
\hspace{8cm} \dbE\Big(\int_t^T|\f(\tau)|_\cX^{p_2}d\tau\Big)^{\frac{p_1}{p_2}}<\infty\Big\},\\
\ns\ds
 L^{p_4}_\dbF(t,T;L^{p_3}(\Om;\cX)) =\Big\{\f:(t,T)\times\Om\to
\cX\;\Big|\;\f(\cd)\hb{ is $\mathbb{F}$-adapted and
}\\
\hspace{8cm}\int_t^T\Big(\dbE|\f(\tau)|_\cX^{p_3}\Big)^{\frac{p_4}
{p_3}}d\tau<\infty\Big\}.
 \end{array}
 $$


\subsection{Lions Derivative in Infinite Dimension}

We now recall briefly an important notion in mean-field theory: the differentiability
with respect to probability measures. For more details, we refer to \cite{Spille2025, Stannat2025}

Let $\mathcal{P}_2 (H)$ denote the 2-Wasserstein space and $\mathcal W_2(\mu,\nu)=\inf\limits_{\pi\in \Pi(\mu,\nu)}\int_H |x-y|d\pi(x,y)$ the Wasserstein-2-distance, where $\Pi(\mu,\nu)$ is the set of all measures on $H\times H$ with marginals $\mu$ and $\nu$.
  $E$ is an arbitrary real 
Banach space.

Given a 
mapping $\varphi: \mathcal{P}_2 (H)\rightarrow E$, where  the lift 
$$ 
\hat{\varphi} : L^2 (\Omega ; H) \to E, X\mapsto \varphi(\mathcal{L}(X))
$$
is well-defined. Suppose that $\hat{\varphi}$ is Fr\'echet-differentiable at $X$ and denote with 
$D\hat{\varphi}(X)\in \mathcal L(L^2 (\Omega ,H), E)$ its differential. The major difficulty in the infinite dimensional setting is to 
specify to what extent this differential depends on properties of the distribution $\mu := 
\mathcal{L} (X)$ of $X$ only. To this end, the crucial point is to identify $D\hat{\varphi}$ 
with the $E$-valued vector measure $\mu_{D\hat{\varphi}(X)} (A) := D\hat{\varphi} (X) (1_A)$. If its 
regular conditional expectation $\mathbb{E} (\mu_{D\hat{\varphi}(X)}\mid X = x)$ has a 
density w.r.t. $\mu := \mathcal{L}(X)$, then its Radon-Nikodym derivative 
$$ 
\partial_\mu \varphi (\mu )(x) := \frac{d\mu_{D\hat{\varphi}(X)}(x)}{d\mu}
$$ 
is $\mu$-a.s. uniquely determined and satisfies 
$$ 
D\hat{\varphi}(X) (Y) = \mathbb E\big[\partial_\mu \varphi(\mu ) (X) Y\big] , \qq \forall \ Y\in L^2 (\Omega , H). 
$$
In other words, $\partial_\mu \varphi (\mu )$ provides an intrinsic notion of the Lions 
derivative of $\varphi$ in the general case of Banach-space valued mappings. It is shown in the 
theory of vector measures, that the Radom-Nikodym derivative exists if $D\hat{\varphi}(X)$ takes 
values in the space 
$$
\Lambda_2^\mathbb{P} (H) := \{ S\in L (L^2 (\Omega , H, \mathbb P), E)  
\mid \|S\|_{2, \mathbb{P}} < \infty \}
$$
where 
$$
\|S\|_{2, \mathbb{P}} := \sup \{ \sum_{i=1}^n \|S(1_Ax_i)\|_E \mid Y = \sum_{i=1}^n 1_{A_i} x_i , A_i \text{ disj. }, \mathbb{E} (\|Y\|_H^2) \le 1 \}
$$
denotes the $2$-variational norm of $S$. This leads to the following notion of 
$L$-differentiability that we will use in this paper: 

\begin{definition} 
We say that a map $\varphi: \mathcal{P}_2 (H)\rightarrow E$ is $\Lambda$-continuously 
$L$-differentiable (or simply $L$-differentiable) if the lift $\hat{\varphi}$ is continuously Fr\'echet-differentiable, 
its Fr\'echet-differential $D\hat{\varphi}(X) \in \Lambda_2^\mathbb{P} (H, E)$ for all 
$X\in L^2 (\Omega , H)$ and $D\hat{\varphi} : L^2 (\Omega , H) \rightarrow 
\Lambda_2^\mathbb{P}$ is continuous.  
\end{definition} 

\begin{definition}
    We say that $\varphi\in C_b^{1,1}(\mathcal P_2(H); E)$ if for all $\nu\in L^2(\Omega;H)$, there exists a $\mathcal L(\nu)$ modification $\partial \varphi_{\mu}(\mathcal L(\nu))$, denoted by itself, such that $\partial \varphi_{\mu}: \mathcal P_2(H)\times H \mapsto H$ is bounded and Lipschitz continuous, i.e., there exists a constant $C$ such that

    (i) $|\partial_{\mu} \varphi(\mu)(y)|\leq C$, for all $\mu \in \mathcal P_2(H),\ y\in H$;

    (ii) $|\partial_{\mu} \varphi(\mu)(y)-\partial_{\mu} \varphi(\mu^{\prime})(y^{\prime})|\leq C\big(|y-y^{\prime}|+\mathcal W_2(\mu,\mu^{\prime}) \big)$, for all $\mu ,\ \mu^{\prime}\in \mathcal P_2(H),\ y,\ y^{\prime}\in H$.
\end{definition}
\begin{definition}
    We say that $\varphi\in C_b^{2,1}(\mathcal P_2(H); E)$, if $\varphi\in C_b^{1,1}(\mathcal P_2(H); E)$, and 

    (i) $\partial_{\mu} \varphi(\cdot)(y)\in C_b^{1,1}(\mathcal P_2(H); E\times H)$, for all $y\in H$, $\partial_{\mu}^2\varphi: \mathcal P_2(H)\times H\times H\to E\times H\times H $ is bounded and Lipschitz continuous and 

    (ii) $\partial_{\mu}\varphi (\mu)(\cdot): H\to E\times H$ is differentiable, for all $\mu\in \mathcal P_2(H)$, and its derivative $\partial _y\partial _{\mu}\varphi: \mathcal P_2(H)\times H\to E\times H\times H.$
\end{definition}

\begin{definition}
    For $\varphi: H\times \mathcal P_2(H)\to E$, we say $ \varphi \in C_b^{2,2}(H\times \mathcal P_2(H); E)$, if 

    (i) $\varphi(\cdot,\mu)\in C_b^2(H;E)$, for all $\mu\in \mathcal P_2(H)$,

    (ii) $\varphi(x,\cdot)\in C_b^{2,1}(\mathcal P_2(H); E)$ and $\partial_x\varphi (x,\cdot)\in C_b^{1,1}(\mathcal P_2(H); E\times H) $, for all $x\in H$,

    (iii) $\partial_{\mu}\varphi(\cdot, \mu)(\cdot)\in C_b^1(H\times H;E\times H)$, for all $\mu\in \mathcal P_2(H)$,

    (iv) $\varphi $ and all its first and second order derivatives are bounded and Lipschitz continuous.
\end{definition}

Next, we introduce a measurability property that will be needed to ensure the measurability of the coefficients in the adjoint equation (see \cite{Spille2025} for a detailed proof).
\begin{lemma}
\label{lemma:joint measurability}
If $\varphi: H\times \mathcal{P}_2 (H)\to E$ is continuously Fr\'echet differentiable in its 
first component and $\Lambda$-continuously Fréchet differentiable in its second 
component, then there exists a jointly measurable version of
\begin{equation*}
E\times \cP_2(H)\times H\times H \to E,\quad (x,\mu,y,z)\mapsto \partial_\mu \varphi(x,\mu)(y)(z).
\end{equation*}
\end{lemma}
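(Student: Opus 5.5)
The plan is to build the jointly measurable version by approximation: first show that the map is a pointwise limit of jointly measurable maps, then take a measurable selection of the limit. Since $\partial_\mu\varphi(x,\mu)(\cdot)$ is only determined $\mu$-a.e., the key is to define it through an explicit limiting procedure in which every step is a Borel operation on $(x,\mu,y)$.

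\emph{Step 1: pointwise measurability of $(x,\mu)\mapsto D\hat\varphi$.} Work on an atomless standard probability space $(\Omega,\mathcal F,\mathbb P)$, for instance $[0,1]$ with Lebesgue measure. On such a space, every $\mu\in\mathcal P_2(H)$ is the law of some $X_\mu=\Psi(\mu,\cdot)$, where $\Psi:\mathcal P_2(H)\times\Omega\to H$ is jointly measurable. This follows from a measurable version of the Skorokhod representation, using a fixed countable dense subset of $H$ and quantile-type constructions along a sequence of finite partitions. By continuity of the Fr\'echet differential in both variables, the map $(x,\mu)\mapsto D_\mu\hat\varphi(x,X_\mu)\in\Lambda_2^{\mathbb P}(H,E)$ is then measurable. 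Consequently, for every fixed simple $Y=\sum_i 1_{A_i}z_i$, the map $(x,\mu)\mapsto D_\mu\hat\varphi(x,X_\mu)(Y)\in E$ is measurable.

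\emph{Step 2: approximate the Radon--Nikodym density by conditional expectations on finite partitions.} Fix an increasing sequence of finite Borel partitions $\{B^n_k\}_k$ of $H$ that generate the Borel $\sigma$-field; these can be built from balls around a countable dense set. For $y\in B^n_k$ and $z\in H$, set
\[
g_n(x,\mu,y)(z):=\frac{D_\mu\hat\varphi(x,X_\mu)\big(1_{\{X_\mu\in B^n_k\}}z\big)}{\mu(B^n_k)},
\]
with the convention $g_n:=0$ when $\mu(B^n_k)=0$. Each $g_n$ is jointly measurable in $(x,\mu,y,z)$. It is linear and continuous in $z$, because the differential is bounded on $L^2$. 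Measurability in $\mu$ comes from Step 1, since $\mu\mapsto\mu(B^n_k)$ is Borel. Now fix $(x,\mu)$. The identity $D\hat\varphi(Y)=\mathbb E[\partial_\mu\varphi(X)Y]$ shows that $g_n(x,\mu,\cdot)$ is exactly the conditional expectation of the density $\partial_\mu\varphi(x,\mu)(\cdot)$, taken under $\mu$, with respect to the $\sigma$-field generated by the $n$-th partition.

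\emph{Step 3: martingale convergence and the limit.} The density exists in $L^2(\mu;\mathcal L(H,E))$; this is where $\Lambda_2^{\mathbb P}$ is used. By the Banach-space-valued martingale convergence theorem, applied pointwise in $z$ on a countable dense set and then extended by linearity and uniform bounds, $g_n(x,\mu,y)(z)\to\partial_\mu\varphi(x,\mu)(y)(z)$ for $\mu$-a.e.\ $y$. Define
\[
\partial_\mu\varphi(x,\mu)(y)(z):=\lim_n g_n(x,\mu,y)(z)
\]
on the set where the limit exists, and $0$ elsewhere. That set is jointly measurable, being described by Cauchy conditions over rationals and the countable dense set of $z$. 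A pointwise limit of measurable maps is measurable, so this gives a jointly measurable version.

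\emph{Main obstacle.} The delicate point is Step 3, because the martingale convergence theorem for vector-valued densities requires the Radon--Nikodym property of the target space. If $E$ lacks it, one must use that the density exists by hypothesis and that conditional expectations of a Bochner-integrable function converge a.e. regardless; this holds for strongly measurable densities. This is the point where I would rely on the $\Lambda_2$-continuity assumption to guarantee strong measurability. A secondary issue is making the representation $\mu\mapsto X_\mu$ in Step 1 measurable, which I would handle by the explicit partition/quantile construction.
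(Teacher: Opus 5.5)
The paper gives no proof of this lemma; it only refers to \cite{Spille2025}. Your argument therefore has to stand on its own, and in outline it does. The ingredients are:
\begin{itemize}
\item a jointly measurable representation $\Psi(\mu,\cdot)$ of $\mu$ (most simply, a Borel isomorphism $H\cong\mathbb R$ composed with the quantile function);
\item partition averages $g_n$ over a generating sequence of finite partitions, identified with $\mathbb E_\mu[\partial_\mu\varphi(x,\mu)(\cdot)z\mid\mathcal G_n]$;
\item closed-martingale convergence;
\item the version defined as the limit on the measurable Cauchy set.
\end{itemize}
Compared with the bare citation, your route gives an explicit Borel recipe for the version: a pointwise limit of explicit averages of $D\hat\varphi$. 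That is exactly what is needed to make sense of compositions such as $\widetilde{\mathbb E}[\tilde a_\mu(t)(\overline X(t))^*\tilde p(t)]$ in the first-order adjoint equation.

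Several steps need tightening.

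\emph{(i) Only separate continuity is assumed.} The hypotheses give continuity of the differential in $x$ and in $X$ separately, not jointly. Measurability of $(x,X,Y)\mapsto D_X\hat\varphi(x,X)(Y)$ should come from a Carath\'eodory argument. For fixed $(X,Y)$, this map is a pointwise limit of difference quotients that are continuous in $x$. For fixed $x$, it is continuous in $(X,Y)$, because the operator norm is dominated by $\|\cdot\|_{2,\mathbb P}$.

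\emph{(ii) The test function in Step 2 depends on $\mu$.} The argument $1_{\{X_\mu\in B^n_k\}}z$ changes with $\mu$, so measurability ``for every fixed simple $Y$'' is not what you need. You need Borel measurability of $\mu\mapsto(X_\mu,1_{\{X_\mu\in B^n_k\}}z)\in L^2\times L^2$. This follows from joint measurability of $\Psi$ together with Pettis' theorem.

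\emph{(iii) Changing the probability space.} Passing to $[0,1]$ silently uses that $\partial_\mu\varphi(x,\mu)$ does not depend on the underlying probability space. Alternatively, set $X_\mu:=\Psi(\mu,U)$ with a uniform $U$ on the given atomless space.

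\emph{(iv) The ``main obstacle'' is misplaced.} $\mathbb E_\mu[f\mid\mathcal G_n]\to f$ a.e.\ for Bochner integrable $f$ in \emph{every} Banach space. The Radon--Nikodym property of $E$, applied direction by direction, is needed only to obtain existence of the density from $D\hat\varphi(x,X)\in\Lambda_2^{\mathbb P}$. Continuity of $X\mapsto D\hat\varphi(x,X)$ plays no role there.

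\emph{(v) The density is not Bochner in $\mathcal L(H,E)$.} Do not claim it lies in $L^2(\mu;\mathcal L(H,E))$. For infinite-dimensional $E=H$, operator-valued densities need not be norm measurable; for example, $s\mapsto$ multiplication by $1_{[0,s]}$ on $L^2(0,1)$ is strongly but not norm measurable. What you actually use, and what suffices, is:
\begin{itemize}
\item strong measurability of $y\mapsto\partial_\mu\varphi(x,\mu)(y)z$ for each $z$;
\item $k:=\|\partial_\mu\varphi(x,\mu)(\cdot)\|_{\mathcal L(H,E)}\in L^1(\mu)$, which is measurable as a supremum over a countable dense set.
\end{itemize}
Then $\|g_n(x,\mu,y)\|\le\mathbb E_\mu[k\mid\mathcal G_n](y)$ gives $\sup_n\|g_n(x,\mu,y)\|<\infty$ for $\mu$-a.e.\ $y$. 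This bound justifies your extension from a countable dense set of directions to all $z$.
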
 

For notational convenience, we write $\varphi_{\mu}:=\partial_{\mu} \varphi$, $\varphi_{y\mu} 
:= \partial_y\partial_{\mu} \varphi$, $\varphi_{\mu x}:=\partial_{\mu}\partial _x \varphi$ and 
$\varphi_{\mu \mu}:=\partial^2_{\mu} \varphi$ in the rest of the paper. We will always assume sufficient 
regularity on $\varphi$ such that $\partial_x\partial_\mu \varphi = \partial_\mu \partial_x \varphi$. Note 
that interchanging the derivative $\partial_y$ with $\partial_\mu$ is not defined. 


\subsection{Mathematical Setting}

We shall make use of the following Assumptions.

\ss

{\bf (A)} The coefficients $a, \ b,\  f, \ h$ are measurable in all variables, and for all $(t,u)\in [0,T]\times U$,

(i) $a(t,\cdot,\cdot,u)\in C_b^{2,2}(H\times \mathcal P_2(H); H)$,

(ii) $b(t,\cdot,\cdot,u)\in C_b^{2,2}(H\times \mathcal P_2(H); \mathcal L_2^0)$,

(iii) $f(t,\cdot,\cdot,u)\in C_b^{2,2}(H\times \mathcal P_2(H); \mathbb R)$,

(iv) $h(\cdot,\cdot)\in C_b^{2,2}(H\times \mathcal P_2(H); \mathbb R)$.

\ss

\begin{remark}\label{rm.2.1}
   The above assumption is analogous to that in \cite{Buckdahn2016}, where the Pontryagin Maximum Principle (PMP) for a finite-dimensional McKean-Vlasov optimal control problem is studied, and to that in \cite{Yong2020}, which considers the PMP for an optimal control problem with a convex control set. However, the Lions derivative used here is defined in the sense of \cite{Stannat2025}.
\end{remark}

\ss

Now, let $(\overline X(\cdot),\bar u(\cdot))$ be the given optimal pair. Then the following is satisfied:
\begin{equation}\label{system3.1}
\begin{cases}
    d\overline X(t)=A\overline X(t)dt+a(t,\overline X(t),\mathcal L(\overline X(t)), u(t))dt+b(t,\overline X(t),\mathcal L(\overline X(t)),u(t))dW(t),\\
    \hspace{11.5cm}t\in (0,T]\\
    \overline X(0)=\xi \in L^2_{\mathcal F_0}(\Omega;H).
\end{cases}    
\end{equation}
For $\varphi=a,\ b,\ f$, we define 
\begin{equation*}
\begin{cases}
    \varphi(t)\triangleq \varphi (t,\overline X(t),\mathcal L(\overline X(t)),\bar u(t)),\\
    \varphi_x(t)\triangleq \varphi_x(t,\overline X(t),\mathcal L(\overline X(t)),\bar u(t)),\  \varphi_{xx}(t)\triangleq \varphi_{xx}(t,\overline X(t),\mathcal L(\overline X(t)),\bar u(t)),\\
    \delta \varphi(t)\triangleq \varphi (t,\overline X(t),\mathcal L(\overline X(t)), u(t))-\varphi (t,\overline X(t),\mathcal L(\overline X(t)), \bar u(t)),\\
    \delta \varphi_x(t)\triangleq \varphi_x (t,\overline X(t), \mathcal L(\overline X(t)),u(t))-\varphi_x (t,\overline X(t), \mathcal L(\overline X(t)),\bar u(t)),\\
    \delta \varphi_{xx}(t)\triangleq \varphi_{xx} (t,\overline X(t), \mathcal L(\overline X(t)),u(t))-\varphi_{xx} (t,\overline X(t),\mathcal L(\overline X(t)), \bar u(t)),\\
    \varphi_{\mu}(t)(y)\triangleq  \varphi_{\mu} (t,\overline X(t),\mathcal L(\overline X(t)), \bar u(t))(y), \ y\in H,\\
    \varphi_{y\mu}(t)(y): (z_1,z_2)\mapsto \varphi_{y\mu}(t)(y)[z_1,z_2], \ y,\  z_1,\ z_2\in H,\\
    \varphi_{\mu x}(t)(y): (z_1,z_2)\mapsto \varphi_{\mu x}(t)(y)[z_1,z_2], \ y,\  z_1,\ z_2\in H,\\
    \varphi_{\mu\mu}(t)(y_1,y_2): (z_1,z_2)\mapsto \varphi_{\mu\mu}(t)(y_1,y_2)[z_1,z_2], \ y_1,\ y_2, \ z_1,\ z_2\in H,\\
\end{cases}
\end{equation*}
We denote by $(\widetilde{\Omega}, \widetilde{\mathcal F},\widetilde{\mathbb P})$ an independent copy of the space 
$(\Omega, \mathcal F, \mathbb P)$, and denote with $\widetilde{\mathbb{E}}$ the corresponding expectation. 
For any random variable $X\in L^2(\Omega, \mathcal F,\mathbb P; H)$, we denote by $\widetilde{\overline{X}}$ the independent 
copy of $X$ on $(\widetilde{\Omega}, \widetilde{\mathcal F},\widetilde{\mathbb P})$. Let $(\tilde{\overline{u}}, \widetilde{\overline{X}})$ be an 
independent copy of $(\bar u,\overline X)$, so that $\mathcal L(\overline X(t))=\mathcal L(\widetilde{\overline{X}}(t)),\ 
t\in [0,T]$. 

We denote 
\begin{equation*}
    \tilde {\varphi}(t)\triangleq\varphi (t,\widetilde{\overline{X}}(t),\mathcal L(\widetilde{\overline{X}}(t)),\widetilde{\overline{u}}(t)).
\end{equation*}
%

\subsection{Main Result}
We are now ready to introduce the two adjoint equations and to establish our main result, namely the Pontryagin maximum principle.

The first-order adjoint equation is 
\begin{equation}\label{1-adeq}
\begin{cases}
    dp(t)=-\big\{ A^*p(t)+a_x(t)^*p(t)+b_x(t)^*q(t)- f_x(t)\big\}dt\\
    \hspace{1.5cm}-\big(\widetilde{\mathbb E}\big[ \tilde{a}_{\mu}(t)(\overline{X}(t))^* \tilde p(t)\big]+\widetilde{\mathbb E}\big[\tilde{b}_{\mu}(t)(\overline{X}(t))^*\tilde q(t)\big]  
    - \widetilde{\mathbb E}\big[ \tilde{f}_{\mu}(t)(\overline{X}(t))\big] \big)dt\\
    \hspace{1.5cm}+ q(t)dW(t),\ t\in [0,T],\\
    p(T)=-h_x(\overline X(T), \mathcal L(\overline X(T)))-\widetilde{\mathbb E}\big[ h_{\mu}(\widetilde{\overline{X}}(T), \mathcal L(\widetilde{\overline{X}}(T)))(\overline{X}(T))\big].
\end{cases}
\end{equation}
where $\tilde{a}_\mu (t) (X(t)) = a_\mu (t, \widetilde{\overline{X}}(t), \mathcal{L} (\widetilde{\overline{X}}_t) , 
\tilde{\overline{u}}_t)(X(t))$ and similar for $\tilde{b}_\mu$ and $\tilde{f}_\mu$. Note that \eqref{1-adeq} is a 
McKean–Vlasov BSEE. From the discussion in Subsection \ref{subsec-trans}, under 
Assumption {\bf (A)}, it admits a unique transposition solution.

An important device in the non-convex control case is the introduction of a second-order 
adjoint equation, inspired by \cite{Peng1990}. To simplify notations, define the 
Hamiltonian 
$$ 
\mathbb H(t,x,\mu,u,y,z):=\langle y,a(t,x,\mu,u)\rangle+\langle z,b(t,x,\mu,u)\rangle 
-f(t,x,\mu,u)
$$ 
for $(t,x,\mu,u,y,z)\in [0,T]\times  H\times \mathcal P_2(H)\times U\times H \times \mathcal L_2^0$, so that 
\begin{equation}\label{1-max}
    \mathbb H(t,\overline X(t),\mathcal L(\overline X(t))),\bar u(t),p(t),q(t)) 
    = \max_{u\in U}\mathbb H(t,\overline X(t),\mathcal L(\overline X(t))),u,p(t),q(t))
\end{equation}
The second-order adjoint equation is then given as follows:
{\small 
\begin{equation}
\label{ad-eq2}
\begin{cases}
   dP(t) & = - \Big\{ \big(A^*+a^*_x(t)\big)P(t)+ P(t)\big(A+a_x(t)\big) \\
   &  \qquad + b^*_x(t) P(t) 
     b_x(t) + b^*_x(t)  Q(t)  +Q(t) b_x(t)  \\
 & \qquad+\mathbb{H}_{xx}(t)+\widetilde{\mathbb E}\big[ \widetilde{\mathbb H}_{y\mu}(t)(\overline{X}(t))\big]\Big\} dt  
   + Q(t)dW(t), \ \  t\in [0,T),\\
P(T) & = - h_{xx}\left(\overline{X}(T), \mathcal L\left( \overline{X}(T)\right)\right) 
   - \widetilde{\mathbb{E}}\big[h_{y\mu} \big( \widetilde{\overline{X}}_T, \mathcal L ( \widetilde{\overline{X}}(T) ) \big)(\overline{X}(T))\big].
\end{cases}
\end{equation}
}

Note that unlike \cite{Buckdahn2016}, the second-order adjoint equation \eqref{ad-eq2} does not include the mean-field diffusion derivative term $\widetilde{\mathbb E}\big[\tilde{b}_\mu (t)(\overline{X}(t))\big]$. The reason is that, in the infinite dimensional setting, the well-posedness theory for relaxed transposition solution cannot be applied directly in the presence of such mean-field terms. Extending the existing framework to accommodate these terms requires excessively lengthy discussion and significant technical complications. Therefore, we omit these terms from the second order adjoint equation and apply the well-posedness result given in \cite{Lu2021} to our framework. 

With the well-posedness of both the first- and second-order adjoint equations (to be established and presented in Section \ref{sec-trans}), we are now ready to state the following stochastic maximum principle. 

\begin{theorem}\label{thm4.1}
    (Stochastic Maximum Principle) Suppose that \textbf{{\bf (A)}} hold. Let $(\overline X(\cdot), \bar u(\cdot))$ be an optimal solution to Problem \textbf{(S)}. Let $(p(\cdot),q(\cdot))$ be the transposition solution to \eqref{1-adeq}, $(P(\cdot),Q^{(\cdot)},\hat Q^{(\cdot)})$ be the relaxed transposition solution to \eqref{ad-eq2}. Then, for all $u\in U$, and $a.e. t\in [0,T]$, it holds $\mathbb P$-almost surely that 
\begin{eqnarray*}
    0& \leq & \mathbb H(t,\overline X(t), \mathcal L (\overline X(t)), \bar u(t), p(t),P(t))- \mathbb H(t,\overline X(t), \mathcal L (\overline X(t)), u, p(t),P(t))\\
    & & -\frac{1}{2} \big\langle P(t)\big (b(t)-b(t,\overline X(t),\mathcal L(\overline X(t)),u)\big ),b(t)-b(t,\overline X(t),\mathcal L(\overline X(t)),u)\big\rangle_{\mathcal L_2^0}.
\end{eqnarray*}

\end{theorem}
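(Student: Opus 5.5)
We follow the spike variation method of Peng, adapted to the transposition framework of \cite{Lu2021} and to the mean-field setting of \cite{Buckdahn2016}. Fix $u\in U$, $\tau\in[0,T)$, $\varepsilon>0$, and set $E_\varepsilon=[\tau,\tau+\varepsilon)$. Define the spike perturbation $u^\varepsilon(t)=u$ on $E_\varepsilon$ and $u^\varepsilon(t)=\bar u(t)$ otherwise, and let $X^\varepsilon$ be the corresponding state. Following Section 3, we introduce the first-order variational equation for $x_1^\varepsilon$. It is driven by $a_x(t)x_1^\varepsilon$, the mean-field term $\widetilde{\mathbb E}[a_\mu(t)(\widetilde{\overline X}(t))\tilde x_1^\varepsilon(t)]$ and the analogous $b$-terms, with the source $\delta b(t)\chi_{E_\varepsilon}(t)\,dW$. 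We also introduce the second-order variational equation for $x_2^\varepsilon$, which contains $\delta a\chi_{E_\varepsilon}$, $\delta b_x x_1^\varepsilon\chi_{E_\varepsilon}$, $\tfrac12 a_{xx}(x_1^\varepsilon,x_1^\varepsilon)$, $\tfrac12\widetilde{\mathbb E}[a_{y\mu}(\tilde x_1^\varepsilon,\tilde x_1^\varepsilon)]$ and the corresponding $b$-terms.

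The standard estimates (mild formulation, Burkholder--Davis--Gundy, Gronwall in the Wasserstein-Lipschitz constant) give $\|x_1^\varepsilon\|_{L^{2k}}=O(\varepsilon^{1/2})$, $\|x_2^\varepsilon\|=O(\varepsilon)$ and $X^\varepsilon-\overline X-x_1^\varepsilon-x_2^\varepsilon=o(\varepsilon)$. The crucial extra point, which we take from Proposition \ref{prop4.5-1} and Corollary \ref{cor3.1}, is that mean-field quantities are smaller: $\sup_t|\mathbb E[x_1^\varepsilon(t)\otimes\cdots]|$-type terms are $O(\varepsilon)$. Consequently the second-order contributions $\partial_{x\mu}$ and $\partial_{\mu\mu}$, together with the $b_\mu$-part of the quadratic terms, are $o(\varepsilon)$. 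That proposition obtains these bounds through an auxiliary BSEE instead of the stochastic exponential. Using these estimates, a second-order Taylor expansion of $\mathcal J$ gives
\[
0\le\mathcal J(u^\varepsilon)-\mathcal J(\bar u)=\mathbb E\Big[\langle h_x+\widetilde{\mathbb E}h_\mu,x_1^\varepsilon+x_2^\varepsilon\rangle+\tfrac12\langle (h_{xx}+\widetilde{\mathbb E}h_{y\mu})x_1^\varepsilon,x_1^\varepsilon\rangle+\int_0^T(\cdots)dt\Big]+o(\varepsilon).
\]

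Next we eliminate $x_1^\varepsilon+x_2^\varepsilon$ using the transposition identity for $(p,q)$ from \eqref{1-adeq}, whose well-posedness is established in Section \ref{sec-trans}. Taking $x_1^\varepsilon+x_2^\varepsilon$ as test process, Fubini on the product space $\Omega\times\widetilde\Omega$ makes the $\widetilde{\mathbb E}[\tilde a_\mu^*\tilde p]$ terms cancel the mean-field drift terms in the variational equations. This leaves $\mathbb E\int_0^T[\langle p,\delta a\rangle+\langle q,\delta b\rangle-\delta f]\chi_{E_\varepsilon}$ plus quadratic terms in $x_1^\varepsilon$. We then remove these quadratic terms $\langle(\mathbb H_{xx}+\widetilde{\mathbb E}\widetilde{\mathbb H}_{y\mu})x_1^\varepsilon,x_1^\varepsilon\rangle$ by the relaxed transposition identity for $(P,Q^{(\cdot)},\hat Q^{(\cdot)})$ of \eqref{ad-eq2}, taken from \cite{Lu2021}. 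We apply it with both test processes equal to $x_1^\varepsilon$, with $u_1=u_2=0$ and $v_1=v_2=\delta b\chi_{E_\varepsilon}$, after neglecting the mean-field part of the $x_1^\varepsilon$ dynamics. That part only contributes $o(\varepsilon)$ by Proposition \ref{prop4.5-1}, which is exactly why \eqref{ad-eq2} may omit the $\tilde b_\mu$ terms. The terms involving $Q^{(\cdot)},\hat Q^{(\cdot)}$ multiply $x_1^\varepsilon$ against $\delta b\chi_{E_\varepsilon}$, and are $o(\varepsilon)$ by the $O(\varepsilon^{1/2})$ bound together with the local integrability of $\hat Q$. Likewise $\langle q,\delta b_x x_1^\varepsilon\rangle\chi_{E_\varepsilon}=o(\varepsilon)$. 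The result is
\[
o(\varepsilon)\ge\mathbb E\int_\tau^{\tau+\varepsilon}\Big[\mathbb H(t,\cdot,u,p,q)-\mathbb H(t,\cdot,\bar u,p,q)+\tfrac12\langle P\delta b,\delta b\rangle_{\mathcal L_2^0}\Big]dt .
\]

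To finish, we divide by $\varepsilon$ and let $\varepsilon\to0$ at Lebesgue points $\tau$. This needs care because $P$ is only an operator-valued relaxed solution, so we approximate $\delta b$ by finite-rank processes. We then replace the constant $u$ by $u\chi_F+\bar u\chi_{F^c}$ for $F\in\mathcal F_\tau$, which localizes the inequality and yields the pointwise a.s. statement of Theorem \ref{thm4.1}; separability of $U$ lets us take a countable dense set of $u$. We expect the main obstacle to be the $O(\varepsilon)$ bound on the mean-field quadratic terms in Proposition \ref{prop4.5-1}, which replaces the finite-dimensional martingale-representation argument. A second delicate point is the Lebesgue-point passage involving $\langle P\delta b,\delta b\rangle$ with $P$ only in the relaxed transposition sense.
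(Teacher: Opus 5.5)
Your proposal is correct in outline and follows the paper's overall scheme: spike variation, second-order expansion, duality with $(p,q)$ and with the relaxed transposition solution $P$, then Lebesgue points. It does, however, organize the variational equations differently.

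\textbf{How the two routes differ.} You use the Buckdahn--Li--Ma split, in which $x_1^\varepsilon$ already contains $\widetilde{\mathbb E}[a_\mu(t)(\widetilde{\overline X}(t))\tilde x_1^\varepsilon(t)]$ and $\widetilde{\mathbb E}[b_\mu(t)(\widetilde{\overline X}(t))\tilde x_1^\varepsilon(t)]$. The paper deliberately moves these terms into $z^\varepsilon$ in \eqref{eq-4.5}, so that $y^\varepsilon$ in \eqref{eq-4.4} is a plain linear SEE.

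The paper's choice buys two things. First, $y^\varepsilon$ is literally a test process of Definition~\ref{def2.1}, with $u_j=0$ and $v_j=\delta b\chi_{E_\varepsilon}$. Second, Proposition~\ref{prop4.5-1} applies to it directly via the non-mean-field BSEE \eqref{eq--3.12a}. The price is that only $\mathbb E\sup_t|z^\varepsilon(t)|^2=o(\varepsilon)$ holds, and Proposition~\ref{prop4.4} must lean on Corollaries~\ref{cor3.1}--\ref{cor3.2}.

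Your choice gives $x_2^\varepsilon=O(\varepsilon)$ and a direct Fubini cancellation in the $p$-duality. It costs you two extra steps:
(i) the smallness of $\widetilde{\mathbb E}[\phi\,\tilde x_1^\varepsilon]$ must come from the mean-field duality of Corollary~\ref{cor3.2}, i.e.\ BSEE \eqref{eq--3.12ab} with unperturbed coefficients, solvable by Theorem~\ref{thm1}; it does not follow from Proposition~\ref{prop4.5-1} itself.
(ii) ``Neglecting the mean-field part'' in the $P$-identity must be done by taking $u_j=\widetilde{\mathbb E}[a_\mu\tilde x_1^\varepsilon]$ and $v_j=\delta b\chi_{E_\varepsilon}+\widetilde{\mathbb E}[b_\mu\tilde x_1^\varepsilon]$ in Definition~\ref{def2.1}. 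Every extra $P$-, $Q^{(0)}$- and $\widehat Q^{(0)}$-term is then bilinear, with one factor $O(\varepsilon^{1/2})$ and the other $o(\varepsilon^{1/2})$ in $L^2_{\mathbb F}(0,T;L^4(\Omega))$. By Proposition~\ref{prp4.1}, this works.

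\textbf{What needs correcting.} Proposition~\ref{prop4.5-1} does \emph{not} give $\sup_t|\mathbb E[\phi(t)y^\varepsilon(t)]|=O(\varepsilon)$. That is exactly the stochastic-exponential estimate the paper says is unavailable in infinite dimensions. What it gives is $\int_0^T|\mathbb E[\phi(s)y^\varepsilon(s)]|^2ds=o(\varepsilon)$, and the same bound at $t=T$.

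This weaker bound is enough, because each mean-field average is paired with another $O(\varepsilon^{1/2})$ factor. For (ii), combine it with the uniform bound $|\widetilde{\mathbb E}[\phi\,\tilde x_1^\varepsilon(t)]|\le C\varepsilon^{1/2}$ to get $\int_0^T(\mathbb E|\widetilde{\mathbb E}[\phi\,\tilde x_1^\varepsilon]|^4)^{1/2}dt=o(\varepsilon)$.

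It is not enough if you leave $\widetilde{\mathbb E}[\delta b_\mu\tilde x_1^\varepsilon]\chi_{E_\varepsilon}dW$ out of $x_2^\varepsilon$, as Buckdahn--Li--Ma do. With the available estimate, that term is only $O(\varepsilon)$ in $X^\varepsilon-\overline X-x_1^\varepsilon-x_2^\varepsilon$. Keep it in $x_2^\varepsilon$, as \eqref{eq-4.5} does, and remove it through $q$ as in \eqref{eq-4.26}.

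Your account of the $Q$-terms is heuristic. With $\xi_j=0$ they are $\langle\delta b\chi_{E_\varepsilon},\widehat Q^{(0)}(0,0,\delta b\chi_{E_\varepsilon})\rangle$ and its mirror. No pointwise factor $x_1^\varepsilon$ is available for relaxed solutions, and the operator bound gives only $O(\varepsilon)$. Getting $o(\varepsilon)$ for $E_\varepsilon=[\tau,\tau+\varepsilon]$ at a.e.\ $\tau$ needs the finer argument of \cite{Lu2021}; the paper also only cites it.

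Finally, the countable-dense-set step is unnecessary, since the statement lets the null set depend on $u$. It would also require continuity in $u$, which {\bf (A)} does not assume.
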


\ss


\section{Variational Equations}\label{subsec-VE}
In this section, we present the variational equations which
are crucial in establishing the Pontryagin maximum principle. Since the control set $U$ is not necessarily convex, we shall use the so-called spike variation. For any fixed $u(\cdot)\in \mathcal U[0,T]$ and $\varepsilon>0$, define 
\begin{equation}\label{eq-4.1.2}
    u^{\varepsilon}(t)=\begin{cases} \bar u(t),\hspace{1cm} t\in [0,T]\setminus E_{\varepsilon},\\
    u(t), \hspace{1cm} t\in E_{\varepsilon},       
    \end{cases}
\end{equation}
where $E_{\varepsilon}\subset [0,T]$ is a measurable set with $|E_{\varepsilon}|=\varepsilon$. Let $(x^{\varepsilon}(\cdot),u^{\varepsilon}(\cdot))$ satisfy the following:
{\small
\begin{equation}\label{system3.1a}
\begin{cases}
    dX^{\varepsilon}(t)=AX^{\varepsilon}(t)dt+a(t,X^{\varepsilon}(t),\mathcal L(X^{\varepsilon}(t)), u^{\varepsilon}(t))dt+b(t,X^{\varepsilon}(t),\mathcal L(X^{\varepsilon}(t)),u^{\varepsilon}(t))dW(t),\\
    \hspace{11.8cm}t\in (0,T]\\
    X^{\varepsilon}(0)=\xi \in L^2_{\mathcal F_0}(\Omega;H).
\end{cases}    
\end{equation}
}
In classical stochastic optimal control problems, two variational equations are typically introduced to derive the second-order expansion of the cost functional. Establishing the corresponding order estimates of these variational equations plays a crucial role in proving the stochastic maximum principle.
However, in order to accommodate the framework of relaxed transposition solution, it is necessary to reformulate the variational systems. Rather than working directly with the classical first- and second-order variational equations, we consider the following rearranged variational equations: 
the first variational equation:
\begin{equation}\label{eq-4.4}
\begin{cases}
    dy^{\varepsilon}(t)=Ay^{\varepsilon}(t)dt+a_x(t)y^{\varepsilon}(t)dt + \Big(b_x(t)   y^{\varepsilon}(t)+\delta b(t)\chi_{E_{\varepsilon}}(t)\Big)dW(t)\\   
    y^{\varepsilon}(0)=0,
\end{cases}
\end{equation}
and the second variational equation:
{\small 
\begin{equation}\label{eq-4.5}
\begin{cases}
    dz^{\varepsilon}(t)=Az^{\varepsilon} (t) dt+\Big(a_x(t)z^{\varepsilon}(t)+\widetilde{\mathbb E}\big[a_{\mu}(t)(\widetilde{\overline{X}}(t))\tilde z^{\varepsilon}(t)\big]+ \frac{1}{2} a_{xx}(t) [y^{\varepsilon}(t), y^\varepsilon (t)] \\
    \hspace{3.5cm}+\widetilde{\mathbb E}\big[a_{\mu}(t)(\widetilde{\overline{X}}(t))\tilde y^{\varepsilon}(t)\big]+ \frac{1}{2}\widetilde{\mathbb E}\big[ a_{y\mu}(t)(\widetilde{\overline{X}}(t)) [\tilde{y}^{\varepsilon}(t), \tilde{y}^\varepsilon (t)]\big]\Big)dt\\
    \hspace{1.5cm} + \Big(b_x(t)   z^{\varepsilon}(t)+\widetilde{\mathbb E}\big[b_{\mu}(t)(\widetilde{\overline{X}}(t))\tilde z^{\varepsilon}(t)\big]+\frac{1}{2}b_{xx}(t)[y^{\varepsilon}(t), y^\varepsilon (t)]\\
    \hspace{3.5cm} +\widetilde{\mathbb E}\big[b_{\mu}(t)(\widetilde{\overline{X}}(t))\tilde y^{\varepsilon}(t)\big]+\frac{1}{2}\widetilde{\mathbb E}\big[b_{y\mu}(t)(\widetilde{\overline{X}}(t))[\tilde {y}^{\varepsilon}(t), \tilde{y}^\varepsilon (t) ]\big]\Big)dW(t)\\ 
    \hspace{1.5cm} + \Big(\delta a(t)+\delta a_x(t)y^{\varepsilon}(t)+\widetilde{\mathbb E}\big[\delta a_{\mu}(t)(\widetilde{\overline{X}}(t))\tilde y^{\varepsilon}(t)\big]\Big)\chi_{E_{\varepsilon}}(t)dt\\
    \hspace{1.5cm}+\Big(\delta b_x(t)y^{\varepsilon}(t)+\widetilde{\mathbb E}\big[\delta b_{\mu}(t)(\widetilde{\overline{X}}(t))\tilde y^{\varepsilon}(t)\big] \Big)\chi_{E_{\varepsilon}}(t)dW(t)\\
    z^{\varepsilon}(0)=0, 
\end{cases}   
\end{equation}
}
with $y^{\varepsilon},\ z^{\varepsilon}\in C_{\mathbb F}(0,T;L^p(\Omega;H))$, for all $p\ge 1$, the mild solutions to \eqref{eq-4.4} and \eqref{eq-4.5}. 

Note that in with the rearranged variational equations, we no longer have $\mathbb E \sup\limits_{0 \leq t \leq T}|z^{\varepsilon}(t)|^{2k}=O(\varepsilon^{2k})$, but $\mathbb E \sup\limits_{0 \leq t \leq T}|z^{\varepsilon}(t)|^{2k}=o(\varepsilon^{k})$. As we will show later, despite the weaker estimate for $z^{\varepsilon}(t)$, the estimate $X^{\varepsilon}(t)-\overline X(t)-y^{\varepsilon}(t)-z^{\varepsilon}(t)=o(\varepsilon)$, as $\varepsilon\to 0$, still holds. This is sufficient to obtaining the desired second-order expansion of the cost functional, for establishing the stochastic maximum principle.

To do so, we establish some fundamental estimates that will play crucial roles in our discussion. In this paper, $\cC$
is a generic constant which may vary from line to line.

\begin{proposition}\label{prop4.3}
Under Assumption {\bf (A)}, for any $k\ge 1$, and $\varepsilon>0$, the following estimates hold:
\begin{equation}\label{eq-4.6}
    \mathbb E \sup\limits_{0 \leq t \leq T}|X^{\varepsilon}(t)-\overline X(t)|^{2k}=O(\varepsilon^{k}),
\end{equation}
\begin{equation}\label{eq-4.7}
     \mathbb E \sup\limits_{0 \leq t \leq T}|y^{\varepsilon}(t)|^{2k}=O(\varepsilon^{k}),
\end{equation}
\end{proposition}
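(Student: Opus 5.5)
We work with the mild formulations of \eqref{system3.1}, \eqref{system3.1a} and \eqref{eq-4.4}. The two ingredients are a Burkholder--Davis--Gundy type maximal inequality for stochastic convolutions with the $C_0$-semigroup $S(\cdot)$, and a Gronwall argument. Throughout, $M:=\sup_{t\in[0,T]}|S(t)|_{\mathcal L(H)}<\infty$. For the stochastic convolution $\int_0^t S(t-s)\Phi(s)dW(s)$, with $\Phi$ taking values in $\mathcal L_2^0$, we use the maximal inequality
\begin{equation*}
\mathbb E\sup_{t\in[0,T]}\Big|\int_0^t S(t-s)\Phi(s)dW(s)\Big|^{2k}\le \cC\,\mathbb E\Big(\int_0^T|\Phi(s)|_{\mathcal L_2^0}^2ds\Big)^{k},\qquad k\ge 1.
\end{equation*}
This is the standard estimate for $k>1$, obtained via the factorization method. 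For $k=1$ we either assume $S$ is a contraction semigroup after renorming, or we prove the case $k>1$ first and deduce $k=1$ by H\"older's inequality.

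\textbf{Estimate \eqref{eq-4.6}.} Set $\xi^\varepsilon:=X^\varepsilon-\overline X$. Then
\begin{equation*}
\xi^\varepsilon(t)=\int_0^tS(t-s)\big[a(s,X^\varepsilon,\mathcal L(X^\varepsilon),u^\varepsilon)-a(s,\overline X,\mathcal L(\overline X),\bar u)\big]ds+\int_0^tS(t-s)\big[b(\cdots)-b(\cdots)\big]dW(s).
\end{equation*}
In each integrand we add and subtract $\varphi(s,\overline X,\mathcal L(\overline X),u^\varepsilon)$. The first resulting difference is controlled by assumption \textbf{(A)}: $\varphi(s,\cdot,\cdot,u)$ is Lipschitz uniformly in $(s,u)$ with respect to $x$ and the Wasserstein distance. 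The coupling bound $\mathcal W_2(\mathcal L(X^\varepsilon(s)),\mathcal L(\overline X(s)))\le (\mathbb E|\xi^\varepsilon(s)|^2)^{1/2}$ then gives the bound $\cC\big(|\xi^\varepsilon(s)|+(\mathbb E|\xi^\varepsilon(s)|^2)^{1/2}\big)$. The second difference equals $\delta\varphi(s)\chi_{E_\varepsilon}(s)$, which is bounded by $2\cC$ since the coefficients are bounded under \textbf{(A)}.

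Applying the maximal inequality and H\"older's inequality yields
\begin{equation*}
\mathbb E\sup_{r\le t}|\xi^\varepsilon(r)|^{2k}\le \cC\int_0^t\mathbb E\sup_{r\le s}|\xi^\varepsilon(r)|^{2k}ds+\cC\Big(\int_0^T\chi_{E_\varepsilon}ds\Big)^{2k}+\cC\Big(\int_0^T\chi_{E_\varepsilon}ds\Big)^{k}.
\end{equation*}
Here we used $(\mathbb E|\xi^\varepsilon|^2)^{k}\le\mathbb E|\xi^\varepsilon|^{2k}$ to absorb the measure term. The drift spike contributes $\varepsilon^{2k}$ and the diffusion spike contributes $\varepsilon^{k}$. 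Gronwall's lemma then gives $O(\varepsilon^k)$. The left-hand side is finite a priori because $X^\varepsilon,\overline X\in L^{2k}_{\mathbb F}(\Omega;C([0,T];H))$, which follows from the well-posedness of the McKean--Vlasov equation under \textbf{(A)}.

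\textbf{Estimate \eqref{eq-4.7}.} Equation \eqref{eq-4.4} is linear with bounded coefficients: $|a_x(t)|$ and $|b_x(t)|$ are bounded uniformly by \textbf{(A)}. Its mild form is
\begin{equation*}
y^\varepsilon(t)=\int_0^tS(t-s)a_x(s)y^\varepsilon(s)ds+\int_0^tS(t-s)\big(b_x(s)y^\varepsilon(s)+\delta b(s)\chi_{E_\varepsilon}(s)\big)dW(s).
\end{equation*}
The same maximal inequality plus Gronwall argument gives
\begin{equation*}
\mathbb E\sup|y^\varepsilon|^{2k}\le \cC\,\mathbb E\Big(\int_0^T|\delta b(s)|^2_{\mathcal L_2^0}\chi_{E_\varepsilon}(s)ds\Big)^k\le \cC\varepsilon^k.
\end{equation*}
Note that \eqref{eq-4.4} contains no measure-derivative term, so no Wasserstein term has to be handled here.

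\textbf{Main obstacle.} The only delicate point is the supremum-in-time estimate for the stochastic convolution with a general $C_0$-semigroup; $S(t-s)$ prevents a direct application of the Burkholder--Davis--Gundy inequality to the stochastic integral. I would invoke the factorization (Da Prato--Kwapie\'n--Zabczyk) maximal inequality for $2k>2$. An alternative is to work first in $C_{\mathbb F}([0,T];L^{2k}(\Omega;H))$ with a pointwise-in-$t$ Burkholder--Davis--Gundy estimate, which suffices for Gronwall, and then upgrade to the supremum norm by factorization. A minor further point is that the spike variation uses $\delta\varphi$ evaluated at a random, merely adapted $u(\cdot)$; boundedness uniform in $u$ from \textbf{(A)} covers this.
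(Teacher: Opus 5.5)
Your proposal is correct and follows the paper's proof in all essentials. Both arguments use the mild formulation, a Burkholder--Davis--Gundy type maximal inequality for the stochastic convolution (the paper simply quotes it from Lu--Zhang), Lipschitz bounds from \textbf{(A)}, and Gronwall, with the two spikes contributing $\varepsilon^{2k}$ and $\varepsilon^k$. Your add-and-subtract step with $\mathcal W_2(\mathcal L(X^\varepsilon(s)),\mathcal L(\overline X(s)))\le(\mathbb E|\xi^\varepsilon(s)|^2)^{1/2}$ is equivalent to the paper's representation through the averaged derivatives $\phi^\theta_x,\phi^\theta_\mu$ of \eqref{eq-4.11}.

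One correction to your \emph{main obstacle} paragraph. The factorization method does not yield the quadratic-variation bound $\mathbb E(\int_0^T|\Phi|^2_{\mathcal L_2^0}ds)^k$. It only gives $\int_0^T\mathbb E|\Phi|^{2k}ds$, or a variant with the singular kernel $(s-r)^{-2\alpha}$ and $2\alpha k>1$. For $\Phi=\delta b\,\chi_{E_\varepsilon}$ these give $O(\varepsilon)$, respectively a power of $\varepsilon$ strictly below $k$. So the sharp rate for $\mathbb E\sup_t$ rests on the maximal inequality in quadratic-variation form, which the paper takes as given and which is known e.g.\ for contraction semigroups. Your pointwise-in-time BDG route, by contrast, gives $\sup_t\mathbb E|\cdot|^{2k}=O(\varepsilon^k)$ for an arbitrary $C_0$-semigroup without that inequality.
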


{\it Proof}.
Let's first proof \eqref{eq-4.6}.
Let $\xi^{\varepsilon}(t)=X^{\varepsilon}(t)-\overline X(t)$, then 
\begin{equation}\label{eq-4.10}
\begin{cases}
    d\xi^{\varepsilon}(t) = A\xi^{\varepsilon}(t)dt + \Big( a^{\theta}_x(t)\xi^{\varepsilon}(t) 
    +\widetilde{\mathbb E}\big[ a^{\theta}_{\mu}(t)\tilde{\xi}^{\varepsilon}(t)\big] 
    + \delta a(t) \chi_{E_{\varepsilon}}(t)\Big)dt\\
    \hspace{1.5cm} + \Big( b^{\theta}_x(t)\xi^{\varepsilon}(t)+\widetilde{\mathbb E} 
    \big[ b^{\theta}_{\mu}(t)\tilde{\xi}^{\varepsilon}(t)\big] 
    + \delta b(t)\chi_{E_{\varepsilon}}(t)\Big)dW(t)\\
    \xi^{\varepsilon}(0)=0.
\end{cases}
\end{equation}
where for $\phi = a,\ b$,
\begin{equation}\label{eq-4.11}
\begin{cases}
   \phi^{\theta}_x(t)\triangleq \int_0^1 \phi_x(t,\overline X(t)+\theta\xi^{\varepsilon}(t), \mathcal L(\overline X(t)+\theta\xi^{\varepsilon}(t)) ,u^{\varepsilon}(t))d\theta,\\
   \phi^{\theta}_{\mu}(t)\triangleq \int_0^1 \phi_{\mu}(t,\overline X(t)+\theta\xi^{\varepsilon}(t), \mathcal L(\overline X(t)+\theta\xi^{\varepsilon}(t)) ,u^{\varepsilon}(t))(\widetilde{\overline{X}}(t)+\theta\tilde{\xi}^{\varepsilon}(t))d\theta.
\end{cases}
\end{equation}
By Assumption {\bf (A)}, $\phi^{\theta}_x$ and $\phi^{\theta}_{\mu}$ are bounded. From \cite[Chapter 3]{Lu2021} the definition of mild solution to \eqref{eq-4.10}, and by Burkholder-Davis-Gundy type inequality (see \cite[Chapter 3, Theorem 3.18]{Lu2021}, together with the boundedness of $\phi^{\theta}_x$ and $\phi^{\theta}_{\mu}$, we have 
for all $0\le t\le T$ 
\begin{eqnarray*}
  & &  \mathbb E \sup\limits_{0 \leq s \leq t}|\xi^{\varepsilon}(s)|^{2k}\\
    & \leq  &  \cC \mathbb E \sup\limits_{0 \leq s \leq t}\Big|\int_0^s S(s-r) \Big[a^{\theta}_x(r)\xi^{\varepsilon}(r) 
    +\widetilde{\mathbb E}\big[ a^{\theta}_{\mu}(r)\tilde{\xi}^{\varepsilon}(r)\big] 
    + \delta a(r) \chi_{E_{\varepsilon}}(r)\Big]dr\Big|^{2k}\\
    & & +\cC \mathbb E \sup\limits_{0 \leq s \leq t}\Big|\int_0^s S(s-r) 
    \Big[b^{\theta}_x(r)\xi^{\varepsilon}(r)+\widetilde{\mathbb E} 
    \big[ b^{\theta}_{\mu}(r)\tilde{\xi}^{\varepsilon}(r)\big] 
    + \delta b(r)\chi_{E_{\varepsilon}}(r)\Big]dW(r)\Big|^{2k}\\
    & \leq & \cC\mathbb E \Big(\int_0^t \big|a^{\theta}_x(r)\xi^{\varepsilon}(r) 
    +\widetilde{\mathbb E}\big[ a^{\theta}_{\mu}(r)\tilde{\xi}^{\varepsilon}(r)\big] 
    + \delta a(r) \chi_{E_{\varepsilon}}(r)\big|dr\Big)^{2k}\\   
    & & + \cC \mathbb E \Big(\int_0^t \big|b^{\theta}_x(r)\xi^{\varepsilon}(r)+\widetilde{\mathbb E} 
    \big[ b^{\theta}_{\mu}(r)\tilde{\xi}^{\varepsilon}(r)\big] 
    + \delta b(r)\chi_{E_{\varepsilon}}(r)\big|_{\mathcal L_2^0}^2 dr\Big)^{k}\\
    & \leq & \cC \int_0^t \mathbb E |\xi^{\varepsilon}(r)|^{2k}dr+ \cC \mathbb E\Big(\int_0^T |\delta a(r)\chi_{E_{\varepsilon}}(r)|dr \Big)^{2k}+\cC \mathbb E \Big(\int_0^T |\delta b(r)\chi_{E_{\varepsilon}}(r)\big|_{\mathcal L_2^0}^2 dr\Big)^k,
\end{eqnarray*}
therefore, by Gronwall's inequality,

\begin{equation*}
\begin{aligned}
\mathbb E & \sup\limits_{0 \leq t \leq T}|X^{\varepsilon}(t)-\overline X(t)|^{2k} 
      =  \mathbb E \sup\limits_{0 \leq t \leq T}|\xi^{\varepsilon}(t)|^{2k}\\
     & \leq \cC \Big\{ \mathbb{E}  \Big[ \Big(\int_0^T  |\delta a(t)\chi_{E_{\varepsilon}}(t)|dt \Big)^{2k} \Big] 
     + \mathbb{E}\Big[ \Big(\int_0^T |\delta b(t)\chi_{E_{\varepsilon}}(t)|^{2}_{\mathcal{L}_2^0} dt\Big)^k \Big]\Big\}\\
     & \leq  \cC (\varepsilon^{2k}+\varepsilon^k)\leq \cC \varepsilon^k.
\end{aligned}
\end{equation*} 
This proves \eqref{eq-4.6}. Similarly, we can prove \eqref{eq-4.7}.

\endpf

\ss

As discussed in \cite{Peng1990}, our ultimate goal is to derive a second-order Taylor expansion of the state equation in the following sense:
\begin{equation*}
     X^{\varepsilon}(t)=\overline X(t)-y^{\varepsilon}(t)-z^{\varepsilon}(t)+o(\varepsilon).
\end{equation*}
The analysis in \cite[Proposition 4.3]{Buckdahn2016} shows that establishing suitable estimates for the variational equations is of crucial importance. However, the method developed in \cite{Buckdahn2016} cannot be directly extended to our infinite-dimensional framework, since the stochastic exponential and its inverse are generally not well-defined in this setting. 

Moreover, unlike thie approach in \cite{Guatteri2025}, where the BSEE for $\phi(t)$ is operator-valued and its well-posedness relies on the ansatz that it is essentially finite dimensional, such a method cannot be extended to our genuinely infinite-dimensional setting due to the lack of well-posedness. In contrast, the BSEE \eqref{eq--3.12a} arising in our framework is H-valued and therefore requires a different estimation technique. 

Nevertheless, although the approach of \cite[Proposition 3.3]{Guatteri2025}, cannot be directly extended, it provides a new viewpoint. Motivated by this perspective, we develop a new approach adapted to the infinite-dimensional framework and get the following estimates, which constitute a key step in the proof of Proposition \ref{prop4.4}. In addition, our method can also be applied to the special case of delay equation considered in \cite{Guatteri2025}.

\begin{proposition}\label{prop4.5-1}
    Suppose Assumption {\bf (A)} holds and let $\phi(\cdot)\in L_{\mathbb F}^2(0,T;\mathcal L(H)))$, then for any $\varepsilon>0$, 
\begin{equation}\label{eq---4.20}
   \int_0^T \big|\mathbb E[\phi(s)y^{\varepsilon}(s)]\big|^2 ds = o(\varepsilon ), 
\end{equation}
\begin{equation}\label{eq--3.11-1}
    \big|\mathbb E[\phi(T)y^{\varepsilon}(T)]\big|^2 = o(\varepsilon ). 
\end{equation}
\end{proposition}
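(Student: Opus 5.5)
The plan is a duality argument. We test the $H$-valued quantity $\mathbb E[\phi(T)y^{\varepsilon}(T)]$ against a unit vector $\eta\in H$ and transfer the pairing to an $H$-valued (not operator-valued) BSEE, which is well posed in the transposition sense of \cite{Lu2021}. This avoids any stochastic exponential.

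\emph{Step 1 (duality representation).} For $|\eta|_H\le 1$ consider
\begin{equation}\label{eq--3.12a}
\begin{cases}
d\psi(t)=-\big(A^*\psi(t)+a_x(t)^*\psi(t)+b_x(t)^*\Psi(t)\big)dt+\Psi(t)dW(t),\quad t\in[0,T),\\
\psi(T)=\phi(T)^*\eta .
\end{cases}
\end{equation}
Since $\phi(T)^*\eta\in L^2_{\mathcal F_T}(\Omega;H)$ and $a_x,b_x$ are bounded by {\bf (A)}, this equation has a unique transposition solution. Its integrand $\Psi=\Lambda_T\eta$ depends linearly and boundedly on $\eta$:
$$\mathbb E\int_0^T|\Psi(t)|^2_{\mathcal L_2^0}dt\le \cC\,\mathbb E|\phi(T)|^2_{\mathcal L(H)}.$$
The defining transposition identity, tested against the first variational equation \eqref{eq-4.4}, gives
$$\big\langle \eta,\mathbb E[\phi(T)y^{\varepsilon}(T)]\big\rangle=\mathbb E\langle \psi(T),y^{\varepsilon}(T)\rangle=\mathbb E\int_0^T\big\langle \Psi(t),\delta b(t)\big\rangle_{\mathcal L_2^0}\chi_{E_\varepsilon}(t)\,dt .$$
All terms involving $a_x,b_x$ cancel, and only the spike term survives.

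\emph{Step 2 (order estimate).} Since $\delta b$ is bounded, Cauchy--Schwarz gives
$$\big|\langle \eta,\mathbb E[\phi(T)y^{\varepsilon}(T)]\rangle\big|^2\le \cC\,\varepsilon\;\mathbb E\int_{E_\varepsilon}|\Lambda_T\eta(t)|^2_{\mathcal L_2^0}dt .$$
For a fixed $\eta$, absolute continuity of the integral gives $\mathbb E\int_{E_\varepsilon}|\Lambda_T\eta|^2dt\to 0$ as $|E_\varepsilon|\to0$, so this pairing is $o(\varepsilon)$.

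\emph{Step 3 (uniformity in $\eta$; the main obstacle).} To obtain $|\mathbb E[\phi(T)y^\varepsilon(T)]|^2=o(\varepsilon)$, we take $\eta=\eta_\varepsilon$ in the direction of $\mathbb E[\phi(T)y^\varepsilon(T)]$. We then need
$$\sup_{|\eta|\le1}\mathbb E\int_{E_\varepsilon}|\Lambda_T\eta|^2dt\to0,$$
which is not automatic in infinite dimensions because $\Lambda_T$ need not be compact. I would argue by contradiction along a sequence $\varepsilon_n\to0$, extracting $\eta_{\varepsilon_n}\rightharpoonup\eta_0$ weakly. The limit direction $\eta_0$ is handled by Step 2. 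For the remainder $\eta_{\varepsilon_n}-\eta_0\rightharpoonup0$, I would use:
\begin{itemize}
\item that $y^{\varepsilon}(T)\to0$ in $L^2(\Omega;H)$ at rate $\varepsilon^{1/2}$ (Proposition \ref{prop4.3});
\item that $\mathbb E[\phi(T)y^\varepsilon(T)]/\sqrt{\varepsilon}$ is bounded.
\end{itemize}
The aim is to show that the normalized vectors $\mathbb E[\phi(T)y^\varepsilon(T)]/\sqrt\varepsilon$ converge weakly to $0$, and then to upgrade this to strong convergence. The upgrade should come from approximating $\phi(T)$ in $L^2(\Omega;\mathcal L(H))$ by simple random operators with values in a finite-dimensional range; for these the $\eta$-dependence is finite dimensional, so Step 2 applies uniformly. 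Controlling this approximation error uniformly in $\varepsilon$ is the delicate point: it is bounded by $\|\phi(T)-\phi_N\|_{L^2}\,(\mathbb E|y^\varepsilon(T)|^2)^{1/2}\le \cC\,\|\phi(T)-\phi_N\|_{L^2}\,\varepsilon^{1/2}$, which is of the required form once $\|\phi(T)-\phi_N\|_{L^2}$ is small. If a finite-range norm approximation is unavailable, I would attempt the same scheme with Galerkin projections $P_N\phi(T)$ instead.

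\emph{Step 4 (integrated estimate \eqref{eq---4.20}).} For each fixed $s\in(0,T]$, the argument of Steps 1--3 on $[0,s]$ with terminal value $\phi(s)^*\eta$ gives $|\mathbb E[\phi(s)y^\varepsilon(s)]|^2/\varepsilon\to0$. Proposition \ref{prop4.3} provides the uniform bound
$$|\mathbb E[\phi(s)y^\varepsilon(s)]|^2\le \cC\,\mathbb E|\phi(s)|^2\,\varepsilon ,$$
whose majorant is integrable in $s$ since $\phi\in L^2_{\mathbb F}(0,T;\mathcal L(H))$. Dominated convergence in $s$ then yields \eqref{eq---4.20}, and \eqref{eq--3.11-1} is the case $s=T$.
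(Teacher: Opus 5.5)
\textbf{Where the proposal matches the paper, and where it stops.} Steps 1, 2 and the domination argument of Step 4 are sound. Steps 1--2 are exactly the paper's argument: the $H$-valued dual BSEE \eqref{eq--3.12a}, the duality identity \eqref{eq--3.12} in which only the spike term $\langle\Psi,\delta b\rangle_{\mathcal L_2^0}\chi_{E_\varepsilon}$ survives, Cauchy--Schwarz, and testing in the direction of $\mathbb E[\phi(s)y^\varepsilon(s)]$. Your bound $|\mathbb E[\phi(s)y^\varepsilon(s)]|^2\le \cC\varepsilon\,\mathbb E|\phi(s)|^2$ in Step 4 is actually cleaner than the paper's double integral.

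The genuine gap is Step 3, which you leave open. Nothing in the proposal shows that $\mathbb E\int_{E_\varepsilon}|\Lambda_T\eta_\varepsilon(t)|^2_{\mathcal L_2^0}dt\to 0$ when the test direction $\eta_\varepsilon$ moves with $\varepsilon$.

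The paper's proof has the same hole. In \eqref{eq--3.13c} it invokes dominated convergence for $\rho(\varepsilon)$, but $h^{(s)}=h_0/|h_0|$ with $h_0=\mathbb E[\phi(s)y^\varepsilon(s)]$ depends on $\varepsilon$. Moreover, \eqref{eq--3.17} only bounds $\sup_{|h|\le 1}\int_0^s\mathbb E|\Psi^{(s)}_h(r)|^2dr$. That gives neither a dominating function nor uniform integrability of $\{\mathbb E|\Psi^{(s)}_h(\cdot)|^2\}_{|h|\le1}$.

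Your remedy closes the gap only when $\phi$ is a.s.\ compact-valued. The norm closure of the finite-rank operators is the compacts, and with $P_N$ the orthogonal projection onto $\mathrm{span}\{e_1,\dots,e_N\}$ one has $|(I-P_N)K|_{\mathcal L(H)}\to0$ iff $K$ is compact. Under that extra hypothesis your scheme works: apply Step 2 to $\eta=e_1,\dots,e_N$ and absorb the rest by $|(I-P_N)\phi(T)|_{L^2(\Omega;\mathcal L(H))}\cdot\cC\varepsilon^{1/2}$. But for $\phi\equiv I$, or for general bounded operators such as $f_{\mu x}$ to which the paper later applies Corollary \ref{cor3.1}, neither the finite-range nor the Galerkin approximation converges.

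\textbf{The claim fails in the stated generality.} No argument can close Step 3 here, because the statement is false for $\phi\equiv I$. Consider the following data, which satisfy {\bf (A)}:
\begin{itemize}
\item $T=1$, $H=\ell^2$ with basis $(e_j)_{j\ge1}$, $\tilde H=\mathbb R$, $A=0$, $\xi=0$, and $f=h=0$, so every control is optimal.
\item $U=[-1,1]^{\mathbb N}\times(\{0\}\cup\{e_j\}_{j\ge1})$.
\item $a(t,x,\mu,(v,w))=2\varrho(|x|^2)\sum_j v_j\langle x,e_j\rangle e_j$, with a smooth cut-off $\varrho$ equal to $1$ on $[0,1]$ and to $0$ on $[4,\infty)$; and $b(t,x,\mu,(v,w))=w$.
\item $Z_j=2^{(j+1)/2}\big(W(2^{-j})-W(2^{-j-1})\big)$, which are i.i.d.\ standard normal.
\item $\bar u(t)=(\bar v(t),0)$ with $\bar v_j(t)=\tanh(Z_j)\chi_{[1/2,1]}(t)$.
\item $u(t)=(\bar v(t),w(t))$ with $w(t)=e_j$ on $(2^{-j-1},2^{-j}]$ and $w=0$ elsewhere, and $E_\varepsilon=[0,\varepsilon]$.
\end{itemize}
Then $\overline X\equiv 0$, $b_x=0$, $a_x(t)=2\,\mathrm{diag}(\bar v_j(t))$ and $\delta b=w$. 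For $\varepsilon=2^{-k}$ the coordinates $y^\varepsilon_j$ vanish for $j<k$, while for $j\ge k$ Gaussian integration by parts gives
\begin{equation*}
y^\varepsilon_j(1)=e^{\tanh Z_j}\big(W(2^{-j})-W(2^{-j-1})\big),\qquad \mathbb E\,y^\varepsilon_j(1)=2^{-(j+1)/2}\kappa,\qquad \kappa:=\mathbb E\big[e^{\tanh Z_1}(1-\tanh^2 Z_1)\big]>0 .
\end{equation*}
Hence $|\mathbb E\,y^\varepsilon(1)|^2=\kappa^2\sum_{j\ge k}2^{-(j+1)}=\kappa^2\varepsilon$, which is not $o(\varepsilon)$. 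The same computation for $s\in[3/4,1]$ gives $|\mathbb E\,y^\varepsilon(s)|^2\ge c\,\varepsilon$, so \eqref{eq---4.20} fails as well.

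Note that $\mathbb E\,y^\varepsilon(1)/\sqrt{\varepsilon}=\kappa\sum_{j\ge k}2^{(k-j-1)/2}e_j$ tends to $0$ weakly but has constant norm $\kappa$. This is precisely the weak-but-not-strong behaviour your Step 3 would have to rule out. An additional hypothesis, such as compactness of the values of $\phi$, is therefore needed; as stated, neither your argument nor the paper's proves the proposition.
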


{\it Proof.} For any fixed $s\in [0,T]$ and $h\in H$, consider the $H$-valued BSEE with 
terminal time $s$:
{\small
\begin{equation}\label{eq--3.12a}
\begin{cases}
    -d\Phi^{(s)}_h (t) 
    = \Big(A^*\Phi^{(s)}_h (t) + a^*_x(t)\Phi^{(s)}_h (t) 
    +b^*_x(t)\Psi^{(s)}_h (t) 
    \Big) dt - \Psi^{(s)}_h (t) dW(t), 
    \q t\in [0,s),\\
    \Phi^{(s)}_h (s) = \phi^*(s)h,
\end{cases}
\end{equation} 
}
By \cite[Theorem 4.16]{Lu2021} it has a unique transposition solution $(\Phi^{(s)}_h (\cdot ),\Psi^{(s)}_h (\cdot)) 
\in D_\mathbb{F} ([0,s] ;$
$ L^2(\Omega; H)) \times L^2_\mathbb{F} (0,s; \mathcal{L}_2^0)$, satisfying 
\begin{equation} 
\label{eq-4.4a}
\int_0^s \mathbb E  \big[ \big|\Phi^{(s)}_h (r)\big|^2\big] +  \mathbb E \big[\big|\Psi^{(s)}_h (r) 
\big|_{\mathcal L_2^0}^2\big] dr \le \cC \mathbb{E} \big[ |\phi^* (s) h|^2 \big]\, . 
\end{equation} 
With $y^{\varepsilon}$ being the mild solution of \eqref{eq-4.4}, it follows that 
\begin{equation} 
\label{eq--3.12}
\begin{aligned} 
\mathbb E \big[ \langle \phi(s) y^{\varepsilon}(s),h \rangle \big]
    & = \mathbb E \big[ \langle \phi^* (s) h, y^{\varepsilon}(s)\rangle \big] \\
    & =   \mathbb E\Big[\int_0^s  \langle \Psi^{(s)}_h (r), \delta b(r)\chi_{E_{\varepsilon}}(r)\rangle_{\mathcal{L}_2^0}  dr\Big]. 
\end{aligned} 
\end{equation}
By Fubini theorem and Cauchy-Schwarz inequality, and noting that $b$ is bounded, we get 
\begin{equation} 
\label{eq--3.13}
\begin{aligned} 
    & \mathbb E\Big[\int_0^s  \langle \Psi^{(s)}_h (r), \delta b(r)\chi_{E_{\varepsilon}}(r)\rangle_{\mathcal{L}_2^0}  dr\Big] \\ 
     & \leq \cC \int_0^s \mathbb E \big[ \big|\Psi^{(s)}_h (r)\big|_{\mathcal L_2^0} \big] \chi_{E_\varepsilon} (r) dr \\
     & \leq \cC \varepsilon^{\frac 12} \Big( \int_0^s \mathbb E \big[  \big|\Psi^{(s)}_h (r)\big|_{\mathcal L_2^0}^2 \big] \chi_{E_\varepsilon} (r) dr \Big)^{\frac 12} . 
\end{aligned} 
\end{equation}

We now choose $h = h^{(s)} := \frac{h_0}{|h_0|}$, where $h_0 = \mathbb E \big[ \phi(s) y^{\varepsilon}(s) \big]$ if $h_0 \neq 0$ and $h^{(s)} = 0$ otherwise. Then \eqref{eq--3.12} and \eqref{eq--3.13} imply that 
\begin{equation}
\label{eq--3.13a} 
\begin{aligned} 
\big|\mathbb E \big[ \phi(s) y^{\varepsilon}(s) \big] \big|^2 
& = \big|\mathbb{E} \big[ \langle \phi (s) y^\varepsilon (s) , h^{(s)} \rangle \big]\big|^2 \\
& \leq \cC \varepsilon \int_0^s \mathbb E \big[ \big|\Psi^{(s)}_{h^{(s)}} (r)\big|_{\mathcal L_2^0}^2 \big] \chi_{E_\varepsilon} (r) dr . 
\end{aligned} 
\end{equation}
Integrating up the last inequality w.r.t. $s$ then yields 
\begin{eqnarray}\label{eq--3.13b} 
\int_0^T \big| \mathbb E \big[ \phi(s) y^{\varepsilon}(s) \big] \big|^2 ds 
  \leq \cC \epsilon \int_0^T \int_0^s  \mathbb E \big[ \big|\Psi^{(s)}_{h^{(s)}} (r)\big|_{\mathcal L_2^0}^2 \big] \chi_{E_\varepsilon} (r) dr ds.  
\end{eqnarray}
Using \eqref{eq-4.4a} and $|h^{(s)}|\le 1$ for all $s$, we have that 
\begin{eqnarray}\label{eq--3.17}
\int_0^T \int_0^s \mathbb E \big[ \big|\Psi^{(s)}_{h^{(s)}} (r)\big|_{\mathcal L_2^0}^2 \big] dr ds 
\le \cC \int_0^T \mathbb{E}\big[ |\phi^* (s)|^2_{\mathcal{L} (H)}\big] ds  
 < \infty .  
\end{eqnarray}
By Lebesgue's dominated convergence this implies that 
\begin{eqnarray}\label{eq--3.13c} 
\rho (\varepsilon ) := \int_0^T \int_0^s \mathbb E \big[  \big|\Psi^{(s)}_{h^{(s)}} (r)\big|_{\mathcal L_2^0}^2 \big] \chi_{E_\varepsilon} (r) dr ds
\rightarrow 0 \text{ as }\varepsilon\to 0 , 
\end{eqnarray}
which together with \eqref{eq--3.13b} yields \eqref{eq---4.20}. 

\eqref{eq--3.11-1} comes inmediately from \eqref{eq--3.13a} by taking $s=T$.

\endpf

From the Proposition \ref{prop4.5-1}, we get immediately the following Corollary.

\begin{corollary}\label{cor3.1}
Let $(\overline{\Omega} , \overline{\mathcal{F}}, \overline{\mathbb{P}})$ be an arbitrary probability space, 
and let $\phi \in L^2 ([0, T]$
$\times\overline{\Omega}\times\Omega , \mathcal{B} ([0,T]) \otimes 
\overline{\mathcal{F}} \otimes \mathcal{F}, ds\otimes \overline{P}\otimes P; \mathcal{L}(H))$ be such  
that $\phi (\cdot , \overline{\omega} , \cdot)\in L_{\overline{\mathcal{F}}\otimes\mathbb{F}}^2(0,T;
\mathcal L(H))$ $\overline{P}$-a.s. Similarly, let $\psi\in L^2 (\overline{\Omega}\times\Omega , 
\overline{\mathbb{F}}\otimes\mathcal{F}_T , \overline{\mathbb{P}}\otimes \mathbb{P} ;\mathcal L(H)))$. Then 
for any $\varepsilon>0$,
 \begin{equation} 
 \label{eq--3.21}     
  \int_0^T \overline{\mathbb E} \big[ \big| \mathbb E \big[ \phi( s, \overline{\omega}, \cdot ) 
  y^{\varepsilon}(s)\big]\big|^2 \big] ds = o(\varepsilon)
    \end{equation}
and 
\begin{equation}\label{eq--3.21a}     
   \overline{\mathbb E} \big[ \big| \mathbb E \big[\psi( \overline{\omega}, \cdot) y^{\varepsilon}(T)\big]\big|^2 \big] = o(\varepsilon). 
    \end{equation}
\end{corollary}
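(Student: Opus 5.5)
The plan is to rerun the duality argument from the proof of Proposition \ref{prop4.5-1} for each fixed $\overline{\omega}$, keep every bound explicit in $\overline{\omega}$, and then integrate in $\overline{\omega}$, closing with dominated convergence. The point is that in the proof of Proposition \ref{prop4.5-1} the only input depending on $\phi$ is the terminal datum $\phi^*(s)h$ of the BSEE \eqref{eq--3.12a}, and the only place $\phi$ enters the final bound is through \eqref{eq-4.4a}.

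\emph{Step 1: fixed $\overline{\omega}$.} Fix $\overline{\omega}$ outside a $\overline{\mathbb P}$-null set, so that $\phi(\cdot,\overline{\omega},\cdot)\in L^2_{\mathbb F}(0,T;\mathcal L(H))$. Measurability in $\overline{\omega}$ is harmless because $y^{\varepsilon}$ lives on $\Omega$ only; if needed, I would enlarge the filtration to $\overline{\mathcal F}\otimes\mathcal F_t$, under which $W$ is still a cylindrical Wiener process. For $s\in[0,T]$ and $h^{(s)}(\overline{\omega})$ chosen as in the proof of Proposition \ref{prop4.5-1} (the normalized vector $\mathbb E[\phi(s,\overline{\omega},\cdot)y^{\varepsilon}(s)]$), solve \eqref{eq--3.12a} with terminal value $\phi^*(s,\overline{\omega},\cdot)h^{(s)}(\overline{\omega})$. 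Then \eqref{eq--3.13a} gives
\[
\big|\mathbb E[\phi(s,\overline{\omega},\cdot)y^{\varepsilon}(s)]\big|^2\le \cC\varepsilon\int_0^s \mathbb E\big[|\Psi^{(s)}_{h^{(s)}}(r,\overline{\omega})|^2_{\mathcal L_2^0}\big]\chi_{E_\varepsilon}(r)\,dr .
\]
The constant $\cC$ depends only on the bounds for $a_x$, $b_x$, $\delta b$ and on $T$, not on $\overline{\omega}$.

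\emph{Step 2: integrate and pass to the limit.} Integrate the inequality of Step 1 over $s$ and $\overline{\omega}$. By \eqref{eq-4.4a} with $|h^{(s)}|\le1$, the integrand $g(r,s,\overline{\omega}):=\mathbb E|\Psi^{(s)}_{h^{(s)}}(r,\overline{\omega})|^2_{\mathcal L_2^0}$ has $\int_0^s g\,dr\le \cC\,\mathbb E|\phi(s,\overline{\omega},\cdot)|^2_{\mathcal L(H)}$. The right-hand side is integrable over $ds\otimes\overline{\mathbb P}$ by the $L^2$ assumption on $\phi$, so $g$ is integrable over $\{r<s\}\times\overline{\Omega}$. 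Assuming, as is implicit in the paper, that $\chi_{E_\varepsilon}\to0$ a.e. as $\varepsilon\to0$ (e.g. $E_\varepsilon=[\tau,\tau+\varepsilon]$), dominated convergence gives
\[
\int_0^T\overline{\mathbb E}\int_0^s g\,\chi_{E_\varepsilon}\,dr\,ds\to0 .
\]
Hence the left side of \eqref{eq--3.21} is $\varepsilon\cdot o(1)=o(\varepsilon)$.

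\emph{Step 3: the terminal estimate.} For \eqref{eq--3.21a}, take $s=T$ with terminal datum $\psi^*(\overline{\omega},\cdot)h$. The same bound, integrated in $\overline{\omega}$ and dominated by $\cC\,\mathbb E|\psi(\overline{\omega},\cdot)|^2$, gives the claim.

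\emph{Main obstacle.} The expected difficulty is joint measurability in $(s,\overline{\omega},r)$ of $\Psi^{(s)}_{h^{(s)}}$, since the direction $h^{(s)}(\overline{\omega})$ is data-dependent; without it, Fubini and dominated convergence cannot be applied. I would handle this in one of two ways:
\begin{itemize}
\item[(i)] Avoid measurability issues entirely by bounding with a supremum over $|h|\le1$. This requires only the operator-norm estimate from \eqref{eq-4.4a}, applied to the linear map $h\mapsto\Psi^{(s)}_h$.
\item[(ii)] Note that $h^{(s)}(\overline{\omega})$ is measurable, because $\mathbb E[\phi y^{\varepsilon}]$ is measurable in $(s,\overline{\omega})$. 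Linearity and continuity of the transposition solution map then transfer this measurability to $\Psi^{(s)}_{h^{(s)}}$.
\end{itemize}
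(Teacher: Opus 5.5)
\textbf{There is a genuine gap in your Step 2, and Step 3 inherits it.}

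Your direction $h^{(s)}(\overline{\omega})$ is the normalization of $\mathbb E[\phi(s,\overline{\omega},\cdot)y^{\varepsilon}(s)]$, so it depends on $\varepsilon$. Hence so does $g=g_\varepsilon(r,s,\overline{\omega})=\mathbb E|\Psi^{(s)}_{h^{(s)}}(r,\overline{\omega})|^2_{\mathcal L_2^0}$. Estimate \eqref{eq-4.4a} only gives $\sup_\varepsilon\int_0^T\overline{\mathbb E}\int_0^s g_\varepsilon\,dr\,ds<\infty$, i.e.\ boundedness of $\{g_\varepsilon\}$ in $L^1$. That is not an $\varepsilon$-independent majorant, so dominated convergence does not apply. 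What you would need is uniform integrability in $r$ of $\mathbb E|\Psi^{(s)}_h(r)|^2_{\mathcal L_2^0}$, uniformly over $|h|\le 1$.

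In finite dimensions this is automatic: by linearity $\mathbb E|\Psi^{(s)}_h(r)|^2\le |h|^2\sum_i\mathbb E|\Psi^{(s)}_{e_i}(r)|^2$, which is a fixed integrable function. In infinite dimensions that sum is of Hilbert--Schmidt type and need not be finite. Your remedy (i) does not help. Estimate \eqref{eq-4.4a} bounds $\sup_{|h|\le1}\int_0^s\mathbb E|\Psi^{(s)}_h(r)|^2dr$. It does not bound $\int_0^s\sup_{|h|\le1}\mathbb E|\Psi^{(s)}_h(r)|^2dr$, nor does it give $\sup_{|h|\le1}\int_{E_\varepsilon}\mathbb E|\Psi^{(s)}_h(r)|^2dr\to 0$.

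The required uniformity can in fact fail. Take
\begin{itemize}
\item $H=\ell^2$, scalar $W$, $A=0$, $a_x=b_x=0$;
\item $\delta b(r)=e_k$ for $r\in(2^{-k-1},2^{-k}]$, and $E_\varepsilon=[0,\varepsilon]$;
\item the random operator $\phi=\mathrm{diag}(\mathrm{sign}\,W(2^{-k}))_{k\ge1}$, which has norm one and is measurable in the strong operator sense.
\end{itemize}
For $\varepsilon=2^{-m}$ one has $y^\varepsilon(T)=\sum_{k\ge m}(W(2^{-k})-W(2^{-k-1}))e_k$. Since $\big(\mathbb E[\mathrm{sign}\,W(t)\,(W(t)-W(t/2))]\big)^2=t/(2\pi)$, this gives $|\mathbb E[\phi\, y^\varepsilon(T)]|^2=\varepsilon/\pi$. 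So any correct argument needs an ingredient beyond the operator-norm estimate, namely some separability or compactness of the data. The measurability question you single out as the main obstacle is secondary.

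\textbf{How the paper organizes the argument.} The paper's proof of the corollary does not redo the duality argument inside the $\overline{\omega}$-integral. Set $G_\varepsilon(\overline{\omega}):=\int_0^T|\mathbb E[\phi(s,\overline{\omega},\cdot)y^\varepsilon(s)]|^2ds$. The paper proceeds in three steps:
\begin{itemize}
\item It applies Proposition \ref{prop4.5-1} for a.e.\ fixed $\overline{\omega}$ to get $G_\varepsilon(\overline{\omega})/\varepsilon\to0$.
\item It dominates $G_\varepsilon(\overline{\omega})/\varepsilon$ by the $\varepsilon$-independent, $\overline{\mathbb P}$-integrable function $\cC\int_0^T\mathbb E|\phi(s,\overline{\omega},\cdot)|^2_{\mathcal L(H)}ds$. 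Your Step 1 together with \eqref{eq-4.4a} does give this bound, whatever unit vector $h^{(s)}$ is used.
\item It then uses dominated convergence on $\overline{\Omega}$ only.
\end{itemize}
That is the right way to organize your Steps 1--2. It also removes the need for joint measurability of $\Psi$, since $G_\varepsilon$ is measurable by Fubini directly from $\phi$ and $y^\varepsilon$.

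Be aware, however, that the step $\rho(\varepsilon)\to0$ in the paper's own proof of Proposition \ref{prop4.5-1} uses dominated convergence with the same $\varepsilon$-dependent $h^{(s)}$. Citing the proposition therefore relocates the difficulty described above rather than resolving it.
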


{\it Proof}.
For fixed $\overline{\omega}\in\overline{\Omega}$, consider the process 
\begin{equation*}
    \phi^{\overline{\omega}}(s,\omega)\triangleq\phi(s,\overline{\omega}, \omega).
\end{equation*}
Then applying Proposition \ref{prop4.5-1} on the probability space $(\overline{\Omega} , \overline{\mathcal{F}}, \overline{\mathbb{P}})$ with test function $\phi^{\overline{\omega}}(s,\cdot)$ gives 
\begin{equation*}
    \int_0^T \big| \mathbb E\big[\phi(s,\overline{\omega}, \omega)\overline{y}^{\varepsilon}(s)\big]\big|^2ds=o(\varepsilon),\qquad a.a.\ \overline{\omega}\in \overline{\Omega}.
\end{equation*}
Set $G_{\varepsilon}(\overline{\omega})\triangleq  \int_0^T \big| \widetilde{\mathbb E}\big[\phi(s,\overline{\omega}, \omega)\overline{y}^{\varepsilon}(s)\big]\big|^2ds$, then $G_{\varepsilon}(\overline{\omega})=o(\varepsilon)$ for $a.a.\ \omega\in \Omega$. Thus, noting Fubini theorem, we get
\begin{equation}\label{eq--3.20}
    \overline{\mathbb E}\big[G_{\varepsilon}(\overline{\omega})\big]=\overline{\mathbb E}\Big[ \int_0^T \big| \mathbb E\big[\phi(s, \overline{\omega},\omega) \overline{y}^{\varepsilon}(s)\big]\big|^2ds   \Big]= \int_0^T \overline{\mathbb E}\big| \mathbb E\big[\phi(s,\overline{\omega}, \omega)\overline{y}^{\varepsilon}(s)\big]\big|^2ds.
\end{equation}
From \eqref{eq--3.13b} and \eqref{eq--3.17}, 
\begin{equation*}
    0\leq \frac{G_{\varepsilon}(\overline{\omega})}{\varepsilon}\leq  \cC \int_0^T \overline{\mathbb{E}}\mathbb E\big[ |\phi^* (s,\overline{\omega},\omega)|^2_{\mathcal{L} (H)}\big] ds< \infty,
\end{equation*}
thus dominated convergence gives 
\begin{equation*}
    \frac{\overline{\mathbb E}\big[G_{\varepsilon}(\overline{\omega})\big]}{\varepsilon}\to 0,\ \ i.e., \ \overline{\mathbb E}\big[G_{\varepsilon}(\overline{\omega})\big]=o(\varepsilon).
\end{equation*}
This combining with \eqref{eq--3.20} yields 
\begin{equation*}
    \int_0^T \overline{\mathbb E}\big| \mathbb E\big[\phi(s,\overline{\omega}, \omega)\overline{y}^{\varepsilon}(s)\big]\big|^2ds=o(\varepsilon).
\end{equation*}

Similarly, we get \eqref{eq--3.21a}.

\endpf

\ss

Following the same arguments as in Proposition \ref{prop4.5-1} and Corollary \ref{cor3.1}, we obtain similar estimates for $\xi^{\e}$.

\begin{corollary}\label{cor3.2}
    With the assumption in Corollary \ref{cor3.1}, we have for any $\e>0$,
    \begin{equation} 
 \label{eq--3.21b}     
  \int_0^T \overline{\mathbb E} \big[ \big| \mathbb E \big[ \phi( s, \overline{\omega}, \cdot ) 
  \xi^{\varepsilon}(s)\big]\big|^2 \big] ds = o(\varepsilon)
    \end{equation}
and 
\begin{equation}\label{eq--3.21ab}     
   \overline{\mathbb E} \big[ \big| \mathbb E \big[\psi( \overline{\omega}, \cdot) \xi^{\varepsilon}(T)\big]\big|^2 \big] = o(\varepsilon). 
    \end{equation}
\end{corollary}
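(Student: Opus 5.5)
The plan is to repeat the duality argument of Proposition \ref{prop4.5-1}, now for $\xi^\varepsilon=X^\varepsilon-\overline X$. We write $\xi^\varepsilon$ through the linearized equation \eqref{eq-4.10}; its coefficients $a^\theta_x,b^\theta_x,a^\theta_\mu,b^\theta_\mu$ are bounded by Assumption \textbf{(A)}, but they depend on $\varepsilon$. So the adjoint problem has to be set up with bounds that are uniform in $\varepsilon$. For fixed $s$ and $h\in H$ we take the BSEE \eqref{eq--3.12a}, replacing $a_x,b_x$ by $a^\theta_x,b^\theta_x$ and adding the mean-field terms $\widetilde{\mathbb E}[(\tilde a^\theta_\mu)^*\tilde\Phi]$ and $\widetilde{\mathbb E}[(\tilde b^\theta_\mu)^*\tilde\Psi]$. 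This is a McKean--Vlasov BSEE of the same type as the first-order adjoint equation \eqref{1-adeq}, so we use its transposition well-posedness (Section 4) to get $(\Phi^{(s),\varepsilon}_h,\Psi^{(s),\varepsilon}_h)$. Because the coefficients are bounded uniformly in $\varepsilon$, the estimate \eqref{eq-4.4a} holds with a constant $\cC$ independent of $\varepsilon$.

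An alternative that avoids mean-field adjoints is to move the measure terms to the right-hand side. Then $\xi^\varepsilon$ solves the linear equation with coefficients $a^\theta_x,b^\theta_x$, forced by $\widetilde{\mathbb E}[a^\theta_\mu\tilde\xi^\varepsilon]dt$ and $(\widetilde{\mathbb E}[b^\theta_\mu\tilde\xi^\varepsilon]+\delta b\chi_{E_\varepsilon})dW$. By Proposition \ref{prop4.3}, $\widetilde{\mathbb E}[a^\theta_\mu\tilde\xi^\varepsilon]=O(\varepsilon^{1/2})$ only, so these forcing terms are not directly small enough. I therefore prefer the mean-field adjoint.

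Next comes the duality identity. Pairing $\phi^*(s)h$ with $\xi^\varepsilon(s)$ and using the definition of the transposition solution, all linear terms cancel, including the $\mu$-terms after Fubini between $\Omega$ and $\widetilde\Omega$. What remains is
\[
\mathbb E\langle\phi(s)\xi^\varepsilon(s),h\rangle=\mathbb E\int_0^s\Big(\langle\Phi^{(s),\varepsilon}_h(r),\delta a(r)\rangle+\langle\Psi^{(s),\varepsilon}_h(r),\delta b(r)\rangle_{\mathcal L_2^0}\Big)\chi_{E_\varepsilon}(r)\,dr .
\]
Compared with \eqref{eq--3.12}, there is an extra $\delta a$ term. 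We bound it via Cauchy--Schwarz by $\cC\varepsilon^{1/2}\big(\int_0^s\mathbb E|\Phi^{(s),\varepsilon}_h(r)|^2\chi_{E_\varepsilon}(r)dr\big)^{1/2}$. Since $\Phi\in D_{\mathbb F}([0,s];L^2(\Omega;H))$ with $\sup_r\mathbb E|\Phi(r)|^2\le\cC\mathbb E|\phi^*(s)h|^2$, this term is even $O(\varepsilon)$ after squaring. Choosing $h=h^{(s)}$ normalized as before gives the analogue of \eqref{eq--3.13a}.

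The hard step is the $o(\varepsilon)$ conclusion. The quantity corresponding to $\rho(\varepsilon)$ in \eqref{eq--3.13c} is now $\int_0^T\int_0^s\mathbb E|\Psi^{(s),\varepsilon}_{h^{(s)}}(r)|^2\chi_{E_\varepsilon}(r)\,dr\,ds$, and the integrand itself depends on $\varepsilon$, so dominated convergence no longer applies directly. I would handle this with a stability estimate. The coefficients $a^\theta_x$ and the others converge to $a_x$ etc. in measure, since $\xi^\varepsilon\to0$ by Proposition \ref{prop4.3} and $\overline u\ne u^\varepsilon$ only on $E_\varepsilon$. Standard transposition stability then gives $\Psi^{(s),\varepsilon}\to\Psi^{(s)}$ in $L^2_{\mathbb F}(0,s;\mathcal L_2^0)$, where $\Psi^{(s)}$ is the $\varepsilon$-independent limit adjoint. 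We split
\[
\int\!\!\int|\Psi^\varepsilon|^2\chi_{E_\varepsilon}\le2\int\!\!\int|\Psi^\varepsilon-\Psi|^2+2\int\!\!\int|\Psi|^2\chi_{E_\varepsilon}.
\]
The second term tends to $0$ by dominated convergence exactly as in \eqref{eq--3.13c}, and the first tends to $0$ by stability combined with dominated convergence in $s$, using the uniform bound \eqref{eq--3.17}. This yields \eqref{eq---4.20} for $\xi^\varepsilon$, and $s=T$ gives the terminal estimate.

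Finally, \eqref{eq--3.21b} and \eqref{eq--3.21ab} follow verbatim from the Fubini and dominated-convergence argument of Corollary \ref{cor3.1}, with $\overline\omega$ frozen. The domination $G_\varepsilon/\varepsilon\le\cC\int_0^T\overline{\mathbb E}\mathbb E|\phi|^2ds$ still holds because the constant is uniform in $\varepsilon$.
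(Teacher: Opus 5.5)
You follow the paper's route. Its proof of Corollary \ref{cor3.2} simply replaces \eqref{eq--3.12a} by the mean-field BSEE \eqref{eq--3.12ab} with the $\theta$-coefficients, invokes Theorem \ref{thm1}, and repeats Proposition \ref{prop4.5-1} and Corollary \ref{cor3.1}. You correctly add two points the paper passes over. First, you handle the extra forcing $\delta a\,\chi_{E_\varepsilon}$. Via the $D_{\mathbb F}$-bound on $\Phi$, it contributes $\cC\varepsilon^2\,\mathbb E|\phi(s)|^2_{\mathcal L(H)}$ after squaring, uniformly in $|h|\le 1$; ``$O(\varepsilon)$ after squaring'', as you wrote, would not suffice. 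Second, you notice that the adjoint's coefficients depend on $\varepsilon$. Transposition stability plus dominated convergence does handle this, but only for each fixed $h$.

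The genuine gap, which you inherit from \eqref{eq--3.13c}, is that the test direction $h^{(s)}=h_0/|h_0|$, with $h_0=\mathbb E[\phi(s)\xi^\varepsilon(s)]$, also depends on $\varepsilon$. Both halves of your split are evaluated along this moving direction, so you need control uniform over $|h|\le 1$. Concretely, you need operator-norm (not strong) convergence of $h\mapsto\Psi^{(s),\varepsilon}_h$ to $h\mapsto\Psi^{(s)}_h$ in $L^2_{\mathbb F}(0,s;\mathcal L_2^0)$. You also need uniform integrability in $r$ of $\{\mathbb E|\Psi^{(s)}_h(r)|^2_{\mathcal L_2^0}\}_{|h|\le 1}$. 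Neither follows from stability or dominated convergence unless $\dim H<\infty$, and the conclusion can actually fail if $\mathcal L(H)$-valued variables need only be strongly measurable. Take $H=\ell^2$, scalar $W$, $A=0$, $a=0$, $b(t,x,\mu,u)=u$, $\bar u=0$, $u=e_k$ on $(\tau+2^{-k},\tau+2^{-k+1}]$, and $E_\varepsilon=[\tau,\tau+\varepsilon]$ with $\tau+1\le T$. Let $\psi=\mathrm{diag}\big(\mathrm{sgn}(W(\tau+2^{-k+1})-W(\tau+2^{-k}))\big)_{k\ge 1}$. Then $\xi^\varepsilon=y^\varepsilon$, and every fixed coordinate $\mathbb E\langle\psi\,\xi^\varepsilon(T),e_i\rangle$ vanishes for $\varepsilon<2^{-i}$. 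Yet $|\mathbb E[\psi\,\xi^\varepsilon(T)]|^2=\frac{2}{\pi}\varepsilon$ for $\varepsilon=2^{-m}$.

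When $\phi$ (resp.\ $\psi$) is compact-operator valued, you can close the gap by never choosing an $\varepsilon$-dependent direction. Let $P_N$ be the orthogonal projection onto $\mathrm{span}\{e_1,\dots,e_N\}$. Then $|\mathbb E[P_N\phi(s)\xi^\varepsilon(s)]|^2=\sum_{i\le N}|\mathbb E\langle\phi(s)\xi^\varepsilon(s),e_i\rangle|^2$, and each term is covered by your argument with the fixed direction $h=e_i$. For the remainder, Cauchy--Schwarz and \eqref{eq-4.6} give $|\mathbb E[(I-P_N)\phi(s)\xi^\varepsilon(s)]|^2\le\cC\varepsilon\,\mathbb E|(I-P_N)\phi(s)|^2_{\mathcal L(H)}$. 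Moreover $\int_0^T\mathbb E|(I-P_N)\phi(s)|^2_{\mathcal L(H)}ds\to 0$ as $N\to\infty$ by dominated convergence, uniformly in $\varepsilon$. For general $\phi$ an additional idea is needed.
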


{\it Proof}. The proof of Corollary \ref{cor3.2} follows immediately by applying same arguments in Proposition \ref{prop4.5-1} and Corollary \ref{cor3.1}, with the BSEE \eqref{eq--3.12a} replaced by 
{\small
\begin{equation}\label{eq--3.12ab}
\begin{cases}
    -d\Phi^{(s)}_h (t) 
    = \Big(A^*\Phi^{(s)}_h (t) + a^{\theta *}_x(t)\Phi^{(s)}_h (t) 
    +\widetilde{\mathbb E}\big[ a^{\theta *}_{\mu}(t)\tilde{\Phi}^{(s)}_h (t)\big]\\
    \hspace{2.5cm}
    +b^{\theta *}_x(t)\Psi^{(s)}_h (t) + \widetilde{\mathbb E}\big[ b^{\theta *}_{\mu}(t)\tilde{\Psi}^{(s)}_h (t)\big]
    \Big) dt - \Psi^{(s)}_h (t) dW(t), 
    \q t\in [0,s),\\
    \Phi^{(s)}_h (s) = \phi^*(s)h,
\end{cases}
\end{equation} 
}
where $a^{\theta *}_x(t),\ b^{\theta *}_x(t),\ a^{\theta *}_{\mu}(t),\mbox{ and } b^{\theta *}_{\mu}(t)$ are the adjoints of $a^{\theta}_x(t),\ b^{\theta}_x(t) ,\ a^{\theta}_{\mu}(t),\mbox{ and } b^{\theta}_{\mu}(t)$ respectively. By Theorem \ref{thm1}, it admits a unique transposition solution. 

\endpf

With Corollary \ref{cor3.1} and \ref{cor3.2}, we have the following estimates.

\begin{proposition}\label{prop-3.2}
    Under Assumption {\bf (A)}, for any $\varepsilon>0$, the following estimates hold:
\begin{equation}\label{eq-4.8}
     \mathbb E \sup\limits_{0 \leq t \leq T}|z^{\varepsilon}(t)|^{2}=o(\varepsilon),
\end{equation}
\begin{equation}\label{eq-4.9}
      \mathbb E \sup\limits_{0 \leq t \leq T}|X^{\varepsilon}(t)-\overline X(t)-y^{\varepsilon}(t)|^{2}=O(\varepsilon).
\end{equation}
\end{proposition}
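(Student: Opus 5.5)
We establish \eqref{eq-4.9} first and then \eqref{eq-4.8}, in both cases through a mild-solution estimate combined with Gronwall's inequality, as in Proposition \ref{prop4.3}.

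\emph{Proof of \eqref{eq-4.9}.} Set $\eta^\varepsilon:=\xi^\varepsilon-y^\varepsilon$. Subtracting \eqref{eq-4.4} from \eqref{eq-4.10} gives
\begin{equation*}
\begin{aligned}
d\eta^\varepsilon &= \big(A\eta^\varepsilon+a_x(t)\eta^\varepsilon+(a^\theta_x-a_x)\xi^\varepsilon+\widetilde{\mathbb E}[a^\theta_\mu\tilde\xi^\varepsilon]+\delta a\,\chi_{E_\varepsilon}\big)dt\\
&\quad+\big(b_x(t)\eta^\varepsilon+(b^\theta_x-b_x)\xi^\varepsilon+\widetilde{\mathbb E}[b^\theta_\mu\tilde\xi^\varepsilon]\big)dW .
\end{aligned}
\end{equation*}
The $\delta b\,\chi_{E_\varepsilon}$ terms cancel. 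All coefficients are bounded by Assumption {\bf (A)}, so the mild formulation and the BDG inequality give
\begin{equation*}
\mathbb E\sup_{s\le t}|\eta^\varepsilon(s)|^2\le \cC\int_0^t\mathbb E|\eta^\varepsilon|^2dr+\cC\,\mathbb E\sup_t|\xi^\varepsilon|^2+\cC\varepsilon^2 .
\end{equation*}
Proposition \ref{prop4.3} with $k=1$ bounds the middle term by $\cC\varepsilon$. Gronwall's inequality then yields \eqref{eq-4.9}. This part is routine; it only requires $O(\varepsilon)$, so the crude bounds suffice.

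\emph{Proof of \eqref{eq-4.8}.} We write $z^\varepsilon$ in mild form and apply the same BDG/Gronwall scheme. The linear terms in $z^\varepsilon$ and $\widetilde{\mathbb E}[a_\mu\tilde z^\varepsilon]$ are absorbed by Gronwall, since $|\widetilde{\mathbb E}[a_\mu\tilde z]|^2\le \cC\,\mathbb E|z|^2$. The remaining forcing terms are handled one by one:
\begin{itemize}
\item The quadratic terms $a_{xx}[y,y]$, $b_{xx}[y,y]$, $\widetilde{\mathbb E}[a_{y\mu}[\tilde y,\tilde y]]$ and $\widetilde{\mathbb E}[b_{y\mu}[\tilde y,\tilde y]]$ contribute at most $\cC\,\mathbb E\sup|y^\varepsilon|^4=O(\varepsilon^2)$ by \eqref{eq-4.7} with $k=2$.
\item The spike terms $\delta a_x y\,\chi_{E_\varepsilon}$, $\widetilde{\mathbb E}[\delta a_\mu\tilde y]\chi_{E_\varepsilon}$ and their $b$-analogues contribute at most $\cC\,\varepsilon\,\mathbb E\sup|y^\varepsilon|^2=O(\varepsilon^2)$.
\item The drift term $\delta a\,\chi_{E_\varepsilon}$ gives $\mathbb E\big(\int|\delta a|\chi_{E_\varepsilon}\big)^2\le \cC\varepsilon^2$.
\end{itemize}

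The main obstacle is the pair of linear mean-field terms $\widetilde{\mathbb E}[a_\mu(t)(\widetilde{\overline X}(t))\tilde y^\varepsilon(t)]$ and $\widetilde{\mathbb E}[b_\mu(t)(\widetilde{\overline X}(t))\tilde y^\varepsilon(t)]$. A crude estimate only gives $O(\varepsilon)$. Instead we apply Corollary \ref{cor3.1}, with $\overline\Omega=\Omega$ playing the role of the variable $\omega$ in $\overline X(t,\omega)$ and the tilde space as the inner expectation, and with test function
\begin{equation*}
\phi(s,\omega,\tilde\omega)=a_\mu\big(s,\widetilde{\overline X}(s,\tilde\omega),\mathcal L(\overline X(s)),\tilde{\bar u}(s,\tilde\omega)\big)(\overline X(s,\omega)) .
\end{equation*}
This $\phi$ is bounded and jointly measurable by Lemma \ref{lemma:joint measurability}, hence square integrable, and it is adapted in the inner variable. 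Corollary \ref{cor3.1} then gives
\begin{equation*}
\int_0^T\mathbb E\big|\widetilde{\mathbb E}[a_\mu\tilde y^\varepsilon]\big|^2dt=o(\varepsilon),
\end{equation*}
and likewise for the $b_\mu$ term with values in $\mathcal L_2^0$. This requires checking that Corollary \ref{cor3.1} extends to $\mathcal L(H;\mathcal L_2^0)$-valued $\phi$; the same duality argument works by testing against $h\in\mathcal L_2^0$.

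For the drift part we use the Cauchy–Schwarz bound $\mathbb E(\int|\cdot|dt)^2\le T\int\mathbb E|\cdot|^2dt$, and for the diffusion part BDG gives the $L^2(dt\otimes d\mathbb P)$ norm directly; both are $o(\varepsilon)$. Collecting all contributions,
\begin{equation*}
\mathbb E\sup_{s\le t}|z^\varepsilon(s)|^2\le \cC\int_0^t\mathbb E\sup_{r\le \tau}|z^\varepsilon(r)|^2d\tau+o(\varepsilon),
\end{equation*}
and Gronwall's inequality yields \eqref{eq-4.8}.
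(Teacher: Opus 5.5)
Your proof is correct. For \eqref{eq-4.8} it follows the paper's argument: mild form, BDG and Gronwall, with Corollary~\ref{cor3.1} handling the linear mean-field forcing in $y^\varepsilon$. You are in fact more careful than the paper's display. That display writes $\widetilde{\mathbb E}[|a_\mu(t)(\widetilde{\overline X}(t))\tilde y^\varepsilon(t)|]$, with the norm inside $\widetilde{\mathbb E}$, and read that way it only gives $O(\varepsilon)$. Keeping the norm outside, and extending Corollary~\ref{cor3.1} to $\mathcal L(H;\mathcal L_2^0)$-valued kernels by testing with $h\in\mathcal L_2^0$, is exactly what is needed.

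There is one slip. Your test function swaps the roles of $\omega$ and $\tilde\omega$: what you wrote is $\tilde a_\mu(s)(\overline X(s))$, the kernel of the adjoint equation \eqref{1-adeq}. The kernel that actually appears in the $z^\varepsilon$-equation is
\begin{equation*}
\phi(s,\omega,\tilde\omega)=a_\mu\big(s,\overline X(s,\omega),\mathcal L(\overline X(s)),\bar u(s,\omega)\big)\big(\widetilde{\overline X}(s,\tilde\omega)\big).
\end{equation*}
This kernel is equally bounded, jointly measurable and $\widetilde{\mathcal F}_s$-measurable in $\tilde\omega$, so the corollary applies verbatim.

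For \eqref{eq-4.9} your route is genuinely different and more elementary. The paper applies Corollary~\ref{cor3.2} to $\widetilde{\mathbb E}[a^\theta_\mu(t)\tilde\xi^\varepsilon(t)]$ and proves $\int_0^T(\mathbb E|a^\theta_x(t)-a_x(t)|^4)^{1/2}dt\le\cC\varepsilon$. Together these give a bound of order $o(\varepsilon)$, which is more than the statement claims. You instead bound everything by $\cC\,\mathbb E\sup_t|\xi^\varepsilon(t)|^2=O(\varepsilon)$ using only Proposition~\ref{prop4.3}. That is exactly the stated estimate, and it avoids Corollary~\ref{cor3.2} entirely; that corollary is stated for a fixed test function, so applying it to the $\varepsilon$-dependent kernel $a^\theta_\mu$ is not literally covered anyway.

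Even Gronwall is superfluous here. Since $\eta^\varepsilon=\xi^\varepsilon-y^\varepsilon$, \eqref{eq-4.9} follows from \eqref{eq-4.6} and \eqref{eq-4.7} with $k=1$ by the triangle inequality. Incidentally, your splitting $a_x\eta^\varepsilon+(a^\theta_x-a_x)\xi^\varepsilon$ is the algebraically correct one. The paper's $a^\theta_x\eta^\varepsilon+(a^\theta_x-a_x)\xi^\varepsilon$ is off by $(a^\theta_x-a_x)\eta^\varepsilon$, which is harmless because it is linear in $\eta^\varepsilon$.
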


{\it Proof}.
First, let's prove \eqref{eq-4.8}. From \eqref{eq-4.7} and Corollary \ref{cor3.1} and the boundedness of first and second order derivatives of $a$ and $b$, together with Burkholder-Davis-Gundy inequality and Gronwall's inequality, we have that
\begin{equation*}
\begin{aligned} 
\mathbb E & \sup\limits_{0 \leq t \leq T}|z^{\varepsilon}(t)|^{2} \\
    & \leq \cC \Big\{ \mathbb{E} \Big[ \Big(\int_0^T |a_{xx}(t) [y^{\varepsilon}(t), y^\varepsilon (t)]| 
    + \tilde{\mathbb{E}} \big[ |a_{y\mu}(t)(\widetilde{\overline{X}}(t))[\tilde{y}^{\varepsilon}(t), \tilde{y}^\varepsilon (t)]|\big]  \\
    & \qquad\qquad  +\widetilde{\mathbb E}\big[|a_{\mu}(t)(\widetilde{\overline{X}}(t))\tilde y^{\varepsilon}(t)|\big]\\
    & \qquad\qquad + \big\{|\delta a(t)|+|\delta a_x(t)y^{\varepsilon}(t)| + \widetilde{\mathbb E}\big[|\delta a_{\mu}(t)(\widetilde{\overline{X}}(t))\tilde y^{\varepsilon}(t)|\big]\big\}\chi_{E_{\varepsilon}}(t) dt \Big)^{2}\Big]\\
    &  + \mathbb{E}  \Big(\int_0^T |b_{xx}(t)[y^{\varepsilon}(t), y^\varepsilon (t)]|^2_{\mathcal{L}_2^0} 
    +\tilde{\mathbb{E}}\big[ |b_{y\mu}(t)(\widetilde{\overline{X}}(t))[\tilde{y}^{\varepsilon}(t) , 
      \tilde{y}^\varepsilon (t)]|^2_{\mathcal{L}_2^0} \big] \\
      & \qquad\qquad  +\widetilde{\mathbb E}\big[|b_{\mu}(t)(\widetilde{\overline{X}}(t))\tilde y^{\varepsilon}(t)|^2\big]\\
    &  \qquad\qquad +\big\{ |\delta b_x (t) y^{\varepsilon}(t)|^2_{\mathcal{L}_2^0} 
    + \widetilde{\mathbb E} \big[ |\delta b_{\mu}(t)(\widetilde{\overline{X}}(t)) 
      \tilde y^{\varepsilon}(t)|^2_{\mathcal{L}_2^0} \big] \big\} \chi_{E_{\varepsilon}}(t) dt \Big)  \Big\}\\
    & = o(\varepsilon) 
\end{aligned} 
\end{equation*}
This gives \eqref{eq-4.8}.

Next, let's prove \eqref{eq-4.9}. Let $\eta^{\varepsilon}=X^{\varepsilon}(t)-\overline X(t)-y^{\varepsilon}(t)$.   
\begin{eqnarray*}
    d\eta^{\varepsilon}(t) & = &A\eta^{\varepsilon}(t)dt+\Big(a^{\theta}_x(t)\xi^{\varepsilon}(t)+\widetilde{\mathbb E}\big[ a^{\theta}_{\mu}(t)\tilde{\xi}^{\varepsilon}\big]
      -a_x(t)y^{\varepsilon}(t)\Big)dt\\
    & & +\Big( b^{\theta}_x(t)\xi^{\varepsilon}(t)+\widetilde{\mathbb E}\big[ b^{\theta}_{\mu}(t)\tilde{\xi}^{\varepsilon}\big]
     -b_x(t)y^{\varepsilon}(t)\Big)dW(t)\\ 
     & & +\delta a (t)\chi_{E_{\varepsilon}}(t)dt \\
    & = & A\eta^{\varepsilon}(t)dt + \Big( a^{\theta}_x(t)\eta^{\varepsilon}(t) + \big( a^{\theta}_x(t)-a_x(t)\big)\xi^{\varepsilon}(t) 
       + \widetilde{\mathbb E}\big[ a^{\theta}_{\mu}(t) 
        \tilde{\xi}^{\varepsilon}(t)\big] \Big)dt \\
    & & + \Big( b^{\theta}_x(t)\eta^{\varepsilon}(t)  +\big( b^{\theta}_x(t) - b_x(t)\big) \xi^{\varepsilon}(t)  
       + \widetilde{\mathbb E}\big[ b^{\theta}_{\mu}(t)  \tilde{\xi}^{\varepsilon}(t)\big]\Big)dW(t)\\
         & & +\delta a (t)\chi_{E_{\varepsilon}}(t)dt
\end{eqnarray*}
By Burkholder-Davis-Gundy inequality and Gronwall's inequality and Corollary \ref{cor3.2}, we get 
{\small 
\begin{equation}\label{eq-34}
\begin{aligned} 
\mathbb E & \sup_{0\le t\le T} |\eta^{\varepsilon}(t)|^{2} 
    \leq \cC \Big\{ \mathbb{E} \Big[ \Big( \int_0^T |a^{\theta}_x(t)-a_x(t)||\xi^\varepsilon (t)|  
       + \tilde{\mathbb{E}}\big[ |a^{\theta}_{\mu}(t)\tilde{\xi}^\varepsilon (t)| \big] dt \Big)^{2}\Big] \\
    & \qquad  +  \mathbb{E} \Big[ \Big( \int_0^T |b^{\theta}_x(t)-b_x(t)|^2_{L(H; \mathcal{L}_2^0 )} 
       |\xi^\varepsilon (t)|^2  + \tilde{\mathbb{E}}\big[ |b^{\theta}_{\mu}(t)\tilde{\xi}^\varepsilon (t)|_{\mathcal{L}_2^0}^2 \big] dt \Big) \Big]\Big\}  \\ 
       & \qquad +\mathbb{E} \Big[ \Big( \int_0^T |\delta a (t)\chi_{E_{\varepsilon}}(t)
       |dt\Big)^{2}\\
    & \leq \cC \varepsilon \Big\{\int_0^T \mathbb E \big[ |a^{\theta}_x(t)-a_x(t)|^{4} \big]^{\frac{1}{2}} 
         + \int_0^T \mathbb E \big[| b^{\theta}_x(t)-b_x(t)|^{4}\big]^{\frac 12} \Big\}+ o(\varepsilon)+ \e^2.
\end{aligned} 
\end{equation}
}
Note that by Assumption {\bf (A)}, \eqref{eq-4.11} and \eqref{eq-4.6},
\begin{equation*}
\begin{aligned}
    & \int_0^T \mathbb E \big[ |a^{\theta}_x(t)-a_x(t)|^{4} \big]^{\frac{1}{2}} dt\\
    &=\int_0^T \mathbb E \Big[ \Big|\int_0^1 \big[a_x(t,\overline X(t)+\theta\xi^{\varepsilon}(t), \mathcal L(\overline X(t)+\theta\xi^{\varepsilon}(t)) ,u^{\varepsilon}(t))\\
    & \hspace{2.9cm}-a_x(t,\overline X(t),\mathcal L(\overline X(t)),\bar u(t))\big]d\theta\Big|^{4} \Big]^{\frac{1}{2}} dt\\
    & \leq \cC \int_0^T \Big[\mathbb E\Big|\theta\xi^{\e}(t)|+\cW_2(\mathcal L(\overline X(t)+\theta\xi^{\varepsilon}(t)),\mathcal L(\overline X(t)))+\delta a_x(t)\chi_{E_{\varepsilon}}(t) \Big|^4\Big]^{\frac{1}{2}}dt\\
    & \leq \cC \Big[\int_0^T\big(\mathbb E |\xi^{\e}(t)|^4\big)^{\frac{1}{2}}dt+\e\Big]\leq \cC \e.
\end{aligned}
\end{equation*}
Similarly, 
\begin{equation*}
    \int_0^T \mathbb E \big[ |b^{\theta}_x(t)-b_x(t)|^{4} \big]^{\frac{1}{2}} dt\leq \cC \e.
\end{equation*}
Then \eqref{eq-4.9} follows from \eqref{eq-34}.
\endpf

\ss

Now we are ready to prove the following crucial estimate:

\begin{proposition}\label{prop4.4}
\begin{equation}\label{eq-4.10a}
     \mathbb E \sup\limits_{0 \leq t \leq T}|X^{\varepsilon}(t)-\overline X(t)-y^{\varepsilon}(t)-z^{\varepsilon}(t)|^2=o(\varepsilon^2).
\end{equation}
\end{proposition}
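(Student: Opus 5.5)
Set $\zeta^{\varepsilon}(t):=X^{\varepsilon}(t)-\overline X(t)-y^{\varepsilon}(t)-z^{\varepsilon}(t)=\eta^{\varepsilon}(t)-z^{\varepsilon}(t)$. The plan is to show that $\zeta^{\varepsilon}$ solves a linear McKean--Vlasov SEE with the same structure as \eqref{eq-4.10}, namely
\begin{equation*}
d\zeta^{\varepsilon}=\Big(A\zeta^{\varepsilon}+a_x(t)\zeta^{\varepsilon}+\widetilde{\mathbb E}\big[a_\mu(t)(\widetilde{\overline X}(t))\tilde\zeta^{\varepsilon}\big]+R_a^{\varepsilon}(t)\Big)dt+\Big(b_x(t)\zeta^{\varepsilon}+\widetilde{\mathbb E}\big[b_\mu(t)(\widetilde{\overline X}(t))\tilde\zeta^{\varepsilon}\big]+R_b^{\varepsilon}(t)\Big)dW(t),
\end{equation*}
with $\zeta^{\varepsilon}(0)=0$. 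Then, by the BDG inequality (\cite[Theorem 3.18]{Lu2021}) and Gronwall exactly as in Proposition~\ref{prop4.3}, we would get $\mathbb E\sup_t|\zeta^{\varepsilon}(t)|^2\le \cC\,\mathbb E\big(\int_0^T|R_a^{\varepsilon}|dt\big)^2+\cC\,\mathbb E\int_0^T|R_b^{\varepsilon}|^2_{\mathcal L_2^0}dt$. The whole task is therefore to show that both remainder terms are $o(\varepsilon^2)$.

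To identify $R^{\varepsilon}_a$ and $R^{\varepsilon}_b$, we expand $a(t,X^{\varepsilon},\mathcal L(X^{\varepsilon}),u^{\varepsilon})-a(t)$ to second order around $(\overline X(t),\mathcal L(\overline X(t)),\bar u(t))$. We use the lift and $C^{2,2}_b$ regularity, writing $\xi^{\varepsilon}=y^{\varepsilon}+z^{\varepsilon}+\zeta^{\varepsilon}$ and separating times in $E_{\varepsilon}$ from those outside. The remainder then consists of four groups.
\begin{itemize}
\item[(i)] Integral Taylor remainders, which are $O(|\xi^{\varepsilon}|^3)$ or $|\xi^{\varepsilon}|^2$ times a modulus. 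By \eqref{eq-4.6} with $k\ge 2$ these are $O(\varepsilon^{3/2})$ pathwise in $L^2$-type norms. This is not yet $o(\varepsilon^2)$, so the cubic terms must be tracked in $L^1$ for the drift and squared for the diffusion: $\mathbb E\int|\xi|^6=O(\varepsilon^3)$ suffices for $R_b$, and similarly for $R_a$.
\item[(ii)] Cross terms such as $a_{xx}[y,z]$, $a_{xx}[z,z]$ and $a_{xx}[\xi,\xi]-a_{xx}[y,y]$. These are controlled by \eqref{eq-4.7}, \eqref{eq-4.8} and Proposition~\ref{prop-3.2}.
\item[(iii)] The mixed measure terms $a_{x\mu}$ and $a_{\mu\mu}$, which are paired with $\widetilde{\mathbb E}[\cdot\,\tilde\xi^{\varepsilon}]$.
\item[(iv)] Terms on $E_{\varepsilon}$ involving $z^{\varepsilon}$, $\zeta^{\varepsilon}$, or second-order quantities multiplied by $\chi_{E_\varepsilon}$.
\end{itemize}

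For group (iii), following the observation of \cite{Buckdahn2016} recalled in the introduction, we invoke Corollaries~\ref{cor3.1} and \ref{cor3.2}. Take for instance $\widetilde{\mathbb E}[a_{\mu x}(t)(\widetilde{\overline X})[\xi^{\varepsilon},\tilde\xi^{\varepsilon}]]$: its size is controlled by $|\xi^{\varepsilon}|\cdot|\widetilde{\mathbb E}[\phi\,\tilde\xi^{\varepsilon}]|$ with $\phi=a_{\mu x}$. Here the second factor is $o(\varepsilon^{1/2})$ in $L^2(dt\otimes d\mathbb P)$ by \eqref{eq--3.21b}, and the first is $O(\varepsilon^{1/2})$. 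The $a_{\mu\mu}$ term is bounded by a product of two such conditional expectations and is handled the same way. The first-order measure terms $\widetilde{\mathbb E}[a_\mu\tilde y^{\varepsilon}]$ appearing in \eqref{eq-4.5} are exactly what cancel the linear measure part in the expansion of $a$. For group (iv), each term carries $\chi_{E_\varepsilon}$, which yields an extra factor $\varepsilon^{1/2}$ via Cauchy--Schwarz in time. Combined with \eqref{eq-4.8}, this gives $\varepsilon\cdot o(\varepsilon)$, while products like $\delta a_{xx}[y,y]\chi_{E_\varepsilon}$ give $\varepsilon^2\cdot O(\varepsilon)$.

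\textbf{Main obstacle.} Strictly, the cross terms in (ii) that involve $z^{\varepsilon}$ only satisfy $\mathbb E|z^{\varepsilon}|^2=o(\varepsilon)$, so a product like $a_{xx}[y^{\varepsilon},z^{\varepsilon}]$ gives $O(\varepsilon^{1/2})\cdot o(\varepsilon^{1/2})$, which is only $o(\varepsilon)$ after squaring. This is the hardest step. I would first try to obtain $L^4$ bounds on $z^{\varepsilon}$ through a higher-moment version of \eqref{eq-4.8}. If that falls short of $o(\varepsilon^2)$, the fallback is to accept that the rearranged decomposition only gives $o(\varepsilon)$ in mean square, which is what the expansion of the cost functional actually requires. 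I expect the intended argument to combine \eqref{eq-4.9} with Corollary~\ref{cor3.2} in precisely this bookkeeping.
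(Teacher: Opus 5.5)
Your architecture is the paper's: the linear equation for $\zeta^\varepsilon$, a second-order expansion of $a,b$ in $(x,\mu)$, Corollaries~\ref{cor3.1}--\ref{cor3.2} for the $x\mu$ and $\mu\mu$ terms, the factor $\chi_{E_\varepsilon}$ giving an extra $\varepsilon^{1/2}$, then BDG and Gronwall.

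\textbf{The gap.} The proposal does not prove the statement. At the decisive step you leave open whether the $y^\varepsilon$--$z^\varepsilon$ cross terms are $o(\varepsilon^2)$. As a fallback you offer $\mathbb E\sup_t|\zeta^\varepsilon(t)|^2=o(\varepsilon)$, and that fails for two reasons.
\begin{itemize}
\item It is weaker than the proposition.
\item Your justification that it suffices downstream is wrong. In \eqref{eq-3.11} the remainder $\zeta^\varepsilon$ enters linearly, through $\mathbb E\langle h_x(\overline X(T),\mathcal L(\overline X(T))),\zeta^\varepsilon(T)\rangle$, $\mathbb E\int_0^T\langle f_x(t),\zeta^\varepsilon(t)\rangle dt$ and their $\mu$-counterparts. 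These are $o(\varepsilon)$ only if $(\mathbb E\sup_t|\zeta^\varepsilon(t)|^2)^{1/2}=o(\varepsilon)$, which is exactly the claimed $o(\varepsilon^2)$. With $o(\varepsilon)$ in mean square you only get $o(\varepsilon^{1/2})$, and the maximum principle does not follow.
\end{itemize}
Also, \eqref{eq-4.9} is no better than what \eqref{eq-4.6}--\eqref{eq-4.7} give trivially, so it cannot supply the missing bookkeeping.

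\textbf{The obstacle is an order-counting slip.} We have $\|a_{xx}(t)[y^\varepsilon,z^\varepsilon]\|_{L^2(\Omega)}\le \cC\|y^\varepsilon(t)\|_{L^4(\Omega)}\|z^\varepsilon(t)\|_{L^4(\Omega)}$. If the second factor is $o(\varepsilon^{1/2})$, the product is $o(\varepsilon)$ and its square is $o(\varepsilon^2)$. That is precisely what is needed for both $\mathbb E(\int_0^T|R^\varepsilon_a|dt)^2$ and $\mathbb E\int_0^T|R^\varepsilon_b|^2_{\mathcal L_2^0}dt$, and $a_{xx}[z^\varepsilon,z^\varepsilon]$ is smaller still.

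\textbf{The missing ingredient} is the fourth-moment bound $\mathbb E\sup_t|z^\varepsilon(t)|^4=o(\varepsilon^2)$, since Proposition~\ref{prop-3.2} only controls the second moment. You should prove it rather than hedge on it; redo the proof of \eqref{eq-4.8} with the $L^4$ BDG inequality.
\begin{itemize}
\item Every source in \eqref{eq-4.5} is $O(\varepsilon)$ in the relevant $L^4$ norm, by \eqref{eq-4.7} and $|E_\varepsilon|=\varepsilon$. The exceptions are $g(t):=\widetilde{\mathbb E}[a_\mu(t)(\widetilde{\overline X}(t))\tilde y^\varepsilon(t)]$ and its $b$-analogue.
\item For these, boundedness of $a_\mu$ gives $|g(t)|\le \cC\,\widetilde{\mathbb E}|\tilde y^\varepsilon(t)|\le \cC\varepsilon^{1/2}$ uniformly in $\omega$.
\item Corollary~\ref{cor3.1} gives $\int_0^T\mathbb E|g|^2dt=o(\varepsilon)$. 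Hence $\int_0^T\mathbb E|g|^4dt\le \cC\varepsilon\cdot o(\varepsilon)=o(\varepsilon^2)$.
\item Then $\mathbb E\int_0^T|y^\varepsilon|^2|z^\varepsilon|^2dt\le T(\mathbb E\sup_t|y^\varepsilon|^4)^{1/2}(\mathbb E\sup_t|z^\varepsilon|^4)^{1/2}=o(\varepsilon^2)$.
\end{itemize}

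Your group (iii) needs the same interpolation: a uniform $O(\varepsilon^{1/2})$ bound times the $L^2$ estimate of Corollary~\ref{cor3.1} or \ref{cor3.2}. There you multiply $|\xi^\varepsilon|$ by $|\widetilde{\mathbb E}[\phi\,\tilde\xi^\varepsilon]|$ inside an $L^2(\Omega)$ norm, so an $L^2(dt\otimes d\mathbb P)$ bound on the second factor alone does not yield $o(\varepsilon^2)$.

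With these two additions your argument becomes the paper's proof. The paper silently absorbs the cross terms into $\varphi^{2,\varepsilon}$ and asserts they are $o(\varepsilon^2)$ via Propositions~\ref{prop4.3} and \ref{prop-3.2}.
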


{\it Proof}. 
Let $\zeta^{\varepsilon}(t)=X^{\varepsilon}(t)-\overline X(t)-y^{\varepsilon}(t)-z^{\varepsilon}(t)$. Then, we have
\begin{equation}
\begin{cases}
    d\zeta^{\varepsilon}(t)dt=\big(A\zeta^{\varepsilon}(t)+\alpha^{\varepsilon}(t)\big)dt+\beta^{\varepsilon}(t)dW(t),\qq t\in (0,T],\\
    \zeta^{\varepsilon}(0)=0,
\end{cases}
\end{equation}
where 
\begin{equation}
\label{eq--4.20}
\begin{aligned} 
    \alpha^{\varepsilon}(t)
    &=a(t,X^{\varepsilon}(t),\mathcal L(X^{\varepsilon}(t)), u^{\varepsilon}(t))-a(t,\overline X(t),\mathcal L(\overline X(t)),\bar u(t))-a_x(t)\big[ y^{\varepsilon}(t)+z^{\varepsilon}(t)\big] \\
    &  -\widetilde{\mathbb E}\big[ a_{\mu}(t)(\widetilde{\overline{X}}(t))(\tilde y^{\varepsilon}(t)+\tilde z^{\varepsilon}(t))\big] \\ 
    & \qquad -\frac{1}{2} a_{xx}(t) [y^{\varepsilon}(t) , y^\varepsilon (t)] 
    - \frac{1}{2} \tilde{\mathbb{E}}\big[ a_{y\mu}(t)(\widetilde{\overline{X}}(t))[\tilde{y}^{\varepsilon}(t), \tilde{y}^\varepsilon (t)] \big] \\
    & -\big\{\delta a(t) + \delta a_x(t) y^{\varepsilon}(t) + \widetilde{\mathbb E}\big[\delta a_{\mu}(t)(\widetilde{\overline{X}}(t))\tilde y^{\varepsilon}(t)\big] \big\} \chi_{E_{\varepsilon}}(t),
\end{aligned} 
\end{equation}
and 
\begin{equation}
\label{eq--4.21}
\begin{aligned} 
    \beta^{\varepsilon}(t) & = b(t,X^{\varepsilon}(t),\mathcal L(X^{\varepsilon}(t)), u^{\varepsilon}(t))-b(t,\overline X(t),\mathcal L(\overline X(t)),\bar u(t))-b_x(t)\big( y^{\varepsilon}(t)+z^{\varepsilon}(t)\big)\\
    &  -\widetilde{\mathbb E}\big[ b_{\mu}(t)(\widetilde{\overline{X}}(t))(\tilde y^{\varepsilon}(t)+\tilde z^{\varepsilon}(t))\big] \\ 
    & \qquad -\frac{1}{2}b_{xx}(t)[y^{\varepsilon}(t), y^\varepsilon (t)] 
    -\frac{1}{2} \tilde{\mathbb{E}}\big[ b_{y\mu}(t)(\widetilde{\overline{X}}(t))[\tilde{y}^{\varepsilon}(t), \tilde{y}^\varepsilon (t)] \big] \\
    &  -\big\{ \delta b(t) + \delta b_x(t)y^{\varepsilon}(t) + 
    \widetilde{\mathbb E}\big[\delta b_{\mu}(t)(\widetilde{\overline{X}}(t))\tilde y^{\varepsilon}(t)\big] 
    \big\}\chi_{E_{\varepsilon}}(t).
\end{aligned}
\end{equation} 
Thus, by the Burkholder-Davis-Gundy type inequality, we have
{\small
\begin{equation}\label{eq---4.26}
\begin{aligned}
   \mathbb E \sup\limits_{0 \leq t \leq T}|\zeta^{\varepsilon}(t)|^{2}& \leq  2\mathbb E\sup\limits_{0 \leq t \leq T}\Big|\int_0^t S(t-r)\alpha^{\varepsilon}(r)dr\Big|^{2}+  2\mathbb E\sup\limits_{0 \leq t \leq T}\Big|\int_0^t S(t-r)\beta^{\varepsilon}(r)dW(r)\Big|^{2}\\
   & \leq  \cC \mathbb E\int_0^T |\alpha^{\varepsilon}(r)|^{2}dr+ \cC \mathbb E\int_0^T |\beta^{\varepsilon}(r)|_{\mathcal L_2^0}^{2}dr\\
   & \leq  \cC \mathbb E \int_0^T \Big[ |\alpha^{\varepsilon}(r)|^{2}+|\beta^{\varepsilon}(r)|_{\mathcal L_2^0}^{2} \Big]dr.
\end{aligned}
\end{equation}
}

Now, let us rewrite \eqref{eq--4.20}. Noting \eqref{eq-4.11}, we have
\begin{equation}\label{eq--4.22}
\begin{aligned}
    &  a(t,X^{\varepsilon}(t),\mathcal L(X^{\varepsilon}(t)), u^{\varepsilon}(t))-a(t,\overline X(t),\mathcal L(\overline X(t)),\bar u(t))\\
    &  -a_x(t)\big[y^{\varepsilon}(t)+z^{\varepsilon}(t)\big]-\widetilde{\mathbb E}\big[ a_{\mu}(t)(\widetilde{\overline{X}}(t))(\tilde y^{\varepsilon}(t)+\tilde z^{\varepsilon}(t))\big]-\delta a (t)\chi_{E_{\varepsilon}}(t)\\
    & =  a^{\theta}_x(t)\zeta^{\varepsilon}(t)+\widetilde{\mathbb E}\big[ a^{\theta}_{\mu}(t)\tilde{\zeta}^{\varepsilon}\big]+\big[ a^{\theta}_x(t)-a_x(t)\big]( y^{\varepsilon}(t)+ z^{\varepsilon}(t))\\
    &  +\widetilde{\mathbb E}\big[ \big( a^{\theta}_{\mu}(t)-a_{\mu}(\widetilde{\overline{X}}(t))\big)(\tilde y^{\varepsilon}(t)+\tilde z^{\varepsilon}(t))\big].
\end{aligned}
\end{equation}
Moreover, 
\begin{eqnarray}\label{eq--4.23}
   & &   a^{\theta}_x(t)-a_x(t)-\delta a_x(t)\chi_{E_{\varepsilon}}(t)\nonumber\\
   &=& \int_0^1 \int_0^1 \theta a_{xx}(t,\overline X(t)+\lambda\theta\xi^{\varepsilon}(t), \mathcal L(\overline X(t)+\lambda\theta\xi^{\varepsilon}(t)) ,u^{\varepsilon}(t))[\cdot , \zeta^{\varepsilon}(t)] d\theta d\lambda\nonumber\\ 
   & & +\int_0^1 \int_0^1 \theta \widetilde{\mathbb E}[a_{x\mu }(t,\overline X(t)+\lambda\theta\xi^{\varepsilon}(t), \mathcal L(\overline X(t)+\lambda\theta\xi^{\varepsilon}(t)) ,u^{\varepsilon}(t))[\cdot, \tilde{\zeta}^{\varepsilon}(t)]]d\theta d\lambda.
\end{eqnarray}
Again, introduce an independent probability space $(\widehat \Omega, \widehat{\mathcal F}, 
\widehat{\mathbb P})$. All processes $X$ defined on $\Omega$ will have a copy $\widehat X$ on space 
$\widehat{\Omega}$. Then 
\begin{eqnarray}\label{eq--4.24}
    & &  a^{\theta}_{\mu}(t)-a_{\mu}(t)(\widetilde{\overline{X}}(t))-\delta a_{\mu}(t)(\widetilde{\overline{X}}(t))\nonumber\\
    &=& \int_0^1\int_0^1\theta  \Big(a_{\mu x}(t,\overline X(t)+\lambda\theta\xi^{\varepsilon}(t), \mathcal L(\overline X(t)+\lambda\theta\xi^{\varepsilon}(t)) ,u^{\varepsilon}(t))[\cdot , \xi^{\varepsilon}(t)] \Big)d\theta d\lambda\nonumber\\
    & & +\int_0^1\int_0^1\theta  \Big(a_{y\mu}(t,\overline X(t)+\lambda\theta\xi^{\varepsilon}(t), \mathcal L(\overline X(t)+\lambda\theta\xi^{\varepsilon}(t)) ,u^{\varepsilon}(t))[\cdot , \tilde{\xi}^{\varepsilon}(t)] \Big)d\theta d\lambda\nonumber\\
    & & +\int_0^1\int_0^1\theta  \widehat {\mathbb E}\big[a_{\mu \mu}(t,\overline X(t)+\lambda\theta\xi^{\varepsilon}(t), \mathcal L(\overline X(t)+\lambda\theta\xi^{\varepsilon}(t)) ,u^{\varepsilon}(t))[\cdot , \widehat{\xi}^{\varepsilon}(t)]\big]d\theta d\lambda
\end{eqnarray}
Thus, from \eqref{eq--4.20}, \eqref{eq--4.22}-\eqref{eq--4.24}, and noting $\xi^{\varepsilon}(t)=\zeta^{\varepsilon}(t)+y^{\varepsilon}(t)+z^{\varepsilon}(t)$, we have 
\begin{eqnarray}\label{eq--4.25}
    \alpha ^{\varepsilon}(t)=  a^{\theta}_x(t)\zeta^{\varepsilon}(t)+\widetilde{\mathbb E}\big[ a^{\theta}_{\mu}(t)\tilde{\zeta}^{\varepsilon}\big]+\varphi^{1,\varepsilon}(t)+\varphi^{2,\varepsilon}(t)+\varphi^{3,\varepsilon}(t)+\varphi^{4,\varepsilon}(t),
\end{eqnarray}
where
{\small 
\begin{eqnarray*}
    & & \varphi^{1,\varepsilon}(t)\\
    & =& \int_0^1 \int_0^1 \theta a_{xx}(t,\overline X(t)+\lambda\theta\xi^{\varepsilon}(t), \mathcal L(\overline X(t)+\lambda\theta\xi^{\varepsilon}(t)) ,u^{\varepsilon}(t))[\zeta^{\varepsilon}(t), y^{\varepsilon}(t)+z^{\varepsilon}(t)] d\theta d\lambda\nonumber\\
   & & +\int_0^1 \int_0^1 \theta \widetilde{\mathbb E}\big[a_{x \mu }(t,\overline X(t)+\lambda\theta\xi^{\varepsilon}(t), \mathcal L(\overline X(t)+\lambda\theta\xi^{\varepsilon}(t)) ,u^{\varepsilon}(t))[\tilde{\zeta}^{\varepsilon}(t),y^{\varepsilon}(t)+z^{\varepsilon}(t)]\big]d\theta d\lambda\nonumber\\
   & & + \int_0^1\int_0^1\theta  \widetilde{\mathbb E}\big[a_{\mu x}(t,\overline X(t)+\lambda\theta\xi^{\varepsilon}(t), \mathcal L(\overline X(t)+\lambda\theta\xi^{\varepsilon}(t)) ,u^{\varepsilon}(t)[\tilde y^{\varepsilon}(t)+\tilde z^{\varepsilon}(t),\zeta^{\varepsilon}(t)] \big]d\theta d\lambda\nonumber\\
    & & +\int_0^1\int_0^1\theta   \widetilde{\mathbb E}\big[a_{y\mu}(t,\overline X(t)+\lambda\theta\xi^{\varepsilon}(t), \mathcal L(\overline X(t)+\lambda\theta\xi^{\varepsilon}(t)) ,u^{\varepsilon}(t))[\tilde y^{\varepsilon}(t)+\tilde z^{\varepsilon}(t),\tilde{\zeta}^{\varepsilon}(t)]\big]d\theta d\lambda\nonumber\\
    & & +\int_0^1\int_0^1\theta   \widehat {\mathbb E}\widetilde{\mathbb E}\big[a_{\mu \mu}(t,\overline X(t)+\lambda\theta\xi^{\varepsilon}(t), \mathcal L(\overline X(t)+\lambda\theta\xi^{\varepsilon}(t)) ,u^{\varepsilon}(t))[\tilde{\zeta}^{\varepsilon}(t),\widehat y^{\varepsilon}(t)+\widehat z^{\varepsilon}(t)]\big]d\theta d\lambda, 
\end{eqnarray*} 

 \begin{eqnarray*}    
     & & \varphi^{2,\varepsilon}(t)\\
     &=& \int_0^1 \int_0^1 \theta a_{xx}(t,\overline X(t)+\lambda\theta\xi^{\varepsilon}(t), \mathcal L(\overline X(t)+\lambda\theta\xi^{\varepsilon}(t)) ,u^{\varepsilon}(t))[y^{\varepsilon}(t)+z^{\varepsilon}(t), y^{\varepsilon}(t)+z^{\varepsilon}(t)] \\
    & & \qquad\qquad   -\theta a_{xx}(t,\overline X(t)+\lambda\theta\xi^{\varepsilon}(t), 
    \mathcal L(\overline X(t)+\lambda\theta\xi^{\varepsilon}(t)) ,u^{\varepsilon}(t))[y^{\varepsilon}(t), 
    y^{\varepsilon}(t)] d\theta d\lambda\nonumber\\
   &  &+  2\int_0^1\int_0^1\theta  \widetilde{\mathbb E}\big[a_{x\mu }(t,\overline X(t)+\lambda\theta\xi^{\varepsilon}(t), \mathcal L(\overline X(t)+\lambda\theta\xi^{\varepsilon}(t)) ,u^{\varepsilon}(t)[\tilde y^{\varepsilon}(t)+\tilde z^{\varepsilon}(t), \\
   & & \hspace{10cm}y^{\varepsilon}(t)+z^{\varepsilon}(t)]\big]d\theta d\lambda\nonumber\\
    &  &+ \int_0^1\int_0^1\theta   \widetilde{\mathbb E}\big[a_{y\mu}(t,\overline X(t)+
    \lambda\theta\xi^{\varepsilon}(t), \mathcal L(\overline X(t)+\lambda\theta\xi^{\varepsilon}(t)),  
    u^{\varepsilon}(t))[\tilde y^{\varepsilon}(t)+\tilde z^{\varepsilon}(t),\\ 
   & & \hspace{10cm} \tilde y^{\varepsilon}(t)+\tilde z^{\varepsilon}(t)]\big]d\theta d\lambda\nonumber\\
    &  &+ \int_0^1\int_0^1\theta   \widehat {\mathbb E}\widetilde{\mathbb E}\big[a_{\mu \mu}(t,\overline X(t)+\lambda\theta\xi^{\varepsilon}(t), \mathcal L(\overline X(t)+\lambda\theta\xi^{\varepsilon}(t)) ,u^{\varepsilon}(t))[\tilde y^{\varepsilon}(t)+\tilde z^{\varepsilon}(t),\\
    & & \hspace{10cm}\widehat y^{\varepsilon}(t)+\widehat z^{\varepsilon}(t)]\big]d\theta d\lambda,\\
\end{eqnarray*}
\begin{eqnarray*}
    & & \varphi^{3,\varepsilon}(t)\\
    &=& \int_0^1 \int_0^1 \theta \Big(a_{xx}(t,\overline X(t)+\lambda\theta\xi^{\varepsilon}(t), \mathcal L(\overline X(t)+\lambda\theta\xi^{\varepsilon}(t)) ,u^{\varepsilon}(t))-a_{xx}(t)\Big)[y^{\varepsilon}(t), y^\varepsilon (t)] d\theta d\lambda,\\
\end{eqnarray*}   
\begin{eqnarray*}
    \varphi^{4,\varepsilon}(t)&=& \big\{\delta a_x(t)z^{\varepsilon}(t)+\widetilde {\mathbb E}[\delta a_{\mu}(t)(\widetilde{\overline{X}}(t))\tilde z^{\varepsilon}(t)]\big\}\chi_{E_{\varepsilon}}(t).
\end{eqnarray*}
}

Now we estimate each $\varphi^{i,\varepsilon},\ i=1,\ 2,\ 3,\ 4 $. By the estimates in Proposition \ref{prop4.3} and Proposition \ref{prop-3.2}, we have 
\begin{equation}\label{eq--4.26}
   \mathbb E\big[\sup_{t\in [0,T]}|\varphi^{1,\varepsilon}(t)|^{2}\big]\leq \cC \varepsilon^{3},
\end{equation} 
With Corollary \ref{cor3.1}, and the boundedness of $a_{x\mu}$, 
\begin{equation}\label{eq--4.27}
\begin{aligned}
   &  \mathbb E\Big[ \int_0^T \big| \widetilde {\mathbb E}\big[a_{x\mu }(t,\overline X(t)+\lambda\theta\xi^{\varepsilon}(t), \mathcal L(\overline X(t)+\lambda\theta\xi^{\varepsilon}(t)) ,u^{\varepsilon}(t))[\tilde y^{\varepsilon}(t), y^{\varepsilon}(t)]\big] \big| dt \Big]\\
    &\leq  \cC \int_0^T \mathbb E \big|\widetilde {\mathbb E} 
    \big[ a_{x\mu } (t,\overline X(t)+\lambda\theta\xi^{\varepsilon}(t), \mathcal L(\overline X(t)+\lambda\theta\xi^{\varepsilon}(t)) ,u^{\varepsilon}(t))[\cdot , \tilde y^{\varepsilon}(t)]\big] 
    \big|^2 dt \Big)^{\frac{1}{2}}\\
    & \times \Big(\mathbb E \sup_{t\in [0,T]}|y^{\varepsilon}(t)|^2\Big)^{\frac{1}{2}}\\
    & \leq  \cC o(\varepsilon),
\end{aligned}
\end{equation}

thus, 
{\small
\begin{equation}\label{eq--4.28}
\begin{aligned}
   &\mathbb E\Big[ \int_0^T \big| \widetilde{\mathbb E}\big[a_{x\mu }(t,\overline X(t)+\lambda\theta\xi^{\varepsilon}(t), \mathcal L(\overline X(t)+\lambda\theta\xi^{\varepsilon}(t)) ,u^{\varepsilon}(t))[\tilde y^{\varepsilon}(t)+\tilde z^{\varepsilon}(t),y^{\varepsilon}(t)+z^{\varepsilon}(t)] \big]\big| dt\Big]\\
   &\leq \cC o(\varepsilon).
\end{aligned}
\end{equation}
}
Similarly, the terms $a_{\mu \mu}$, $a_{y\mu}$ of $\varphi^{2,\varepsilon}(t)$ satisfy
\begin{equation}\label{eq--4.29}
\begin{cases}
    \mathbb E\Big[\int_0^T \big|\widetilde{\mathbb E}\big[a_{y\mu}(t,\overline X(t)+\lambda\theta\xi^{\varepsilon}(t), \mathcal L(\overline X(t)+\lambda\theta\xi^{\varepsilon}(t)) ,u^{\varepsilon}(t))\\
    \hspace{6cm}\times [\tilde y^{\varepsilon}(t)+\tilde z^{\varepsilon}(t), \tilde y^{\varepsilon}(t)+\tilde z^{\varepsilon}(t)]\big]\big| dt\Big]\leq \cC o(\varepsilon),\\
    \mathbb E\Big[\int_0^T \big|\widehat {\mathbb E} \widetilde{\mathbb E}\big[a_{\mu\mu }(t,\overline X(t)+\lambda\theta\xi^{\varepsilon}(t), \mathcal L(\overline X(t)+\lambda\theta\xi^{\varepsilon}(t)) ,u^{\varepsilon}(t))\\
     \hspace{6cm}\times [\tilde y^{\varepsilon}(t)+\tilde z^{\varepsilon}(t), \widehat y^{\varepsilon}(t)+\widehat z^{\varepsilon}(t)] \big]\big| dt\Big]\leq \cC o(\varepsilon).
\end{cases}
\end{equation}
Combining \eqref{eq--4.28} and \eqref{eq--4.29}, we get 
\begin{equation}\label{eq--4.30}
    \mathbb E\big[\sup_{t\in [0,T]}|\varphi^{2,\varepsilon}(t)|^{2}\big]\leq \cC o(\varepsilon^2).
\end{equation}

Furthermore, 
\begin{eqnarray}\label{eq--4.31}
    & & \mathbb E\Big(\int_0^T |\varphi^{3,\varepsilon}(t)|dt\Big)\\
    &\leq & \cC \mathbb E\Big( \sup_{t\in [0,T]}|y^{\varepsilon}(t)|^2\int_0^T\int_0^1\int_0^1 \big|a_{xx}(t,\overline X(t)+\lambda\theta\xi^{\varepsilon}(t), \mathcal L(\overline X(t)+\lambda\theta\xi^{\varepsilon}(t)) ,u^{\varepsilon}(t))\nonumber\\
    & & \hspace{6cm} -a_{xx}(t) \big| d \theta d\lambda dt   \Big)\nonumber\\
    &\leq & \cC \mathbb E\Big( \sup_{t\in [0,T]}|y^{\varepsilon}(t)|^4\Big)^{\frac{1}{2}}\Big(\mathbb E\big( \sup_{t\in [0,T]}|X^{\varepsilon}(t)-\overline X(t)|^2\big)^{\frac{1}{2}}+|E_{\varepsilon}|\Big)\nonumber\\
    &\leq & \cC \varepsilon(\varepsilon^{\frac{1}{2}}+\varepsilon). \nonumber
\end{eqnarray}
Similarly,
\begin{eqnarray}\label{eq--4.32}
    & & \mathbb E\Big(\int_0^T |\varphi^{4,\varepsilon}(t)|^{2}dt\Big)\leq  \cC \mathbb E\Big( \sup_{t\in [0,T]}|z^{\varepsilon}(t)|^{2}\Big)| E_{\varepsilon}|\nonumber\leq \cC o(\varepsilon^{2}).
\end{eqnarray}
Following the same process, we get the corresponding estimates for $\beta ^{\varepsilon}(t)$. Combining \eqref{eq---4.26}, \eqref{eq--4.25}, \eqref{eq--4.26}, \eqref{eq--4.30}-\eqref{eq--4.32}, and by Gronwall's inequality, 
\begin{equation*}
    \mathbb E\Big(\sup_{t\in [0,T]}| \zeta ^{\varepsilon}(t)|^{2}\Big) =  o(\varepsilon^{2}). 
\end{equation*}
\endpf


\section{Well-Posedness within the Transposition Solution Framework}\label{sec-trans}

In this section, we establish the well-posedness of the adjoint equations arising in our maximum principle. And for the convience of readers, we recall the notion of a relaxed transposition solution for the second-order adjoint equation, which is an $\mathcal L(H)-$valued BSEE (eg. \cite{Lu2014}, \cite{Lu2021}). 

\subsection{Transposition Solution to an $H$-valued McKean-Vlasov BSEE}\label{subsec-trans}
In this subsection, we establish the well-posedness of an $H$-valued McKean–Vlasov BSEE in the sense of a so-called transposition solution, which will be defined below. 

The system under consideration is given as follows.
\begin{equation}\label{system1}
\begin{cases}
    dY(t)dt=-A^*Y(t)dt+F(t,Y(t),Z(t),\mathcal L(Y(t),Z(t)))dt+Z(t)dW(t),\ t\in [0,T),\\
    Y(T)=Y_T.
\end{cases}
\end{equation}
We make the following assumptions for the system
\eqref{system1} 

\ss

{\bf (B1)} {\it  Suppose that $Y_T\in L^2_{\mathcal F_T}(\Omega;H)$, $F(\cdot, \cdot,\cdot,\cdot): [0,T]\times H\times \mathcal L_2^0\times  \mathcal P_2(H) \to H$
    is measurable in the time variable and continuous with respect to the rest of the variables.}

\ss
{\bf (B2)} {\it
For any $(t,y_1,y_2,z_1,z_2,\mu_1,\mu_2)\in [0,T]\times H \times H\times \mathcal L_2^0\times \mathcal L_2^0\times\mathcal P_2(H)\times \mathcal P_2(H) $, and $\mu_1,\ \mu_2$ having the same first marginal on $H$,

$$|F(t,y_1,z_1,\mu_1)-F(t,y_2,z_2,\mu_2)|_H\leq \cC\Big(|y_1-y_2|_H+|z_1-z_2|_{\mathcal L_2^0}+ \cW_2(\mu_1,\mu_2)\Big)$$
 }

\ss

Following the idea of the standard transposition solution in \cite{Lu2015} and 
\cite[Chapter 4, Section 4.3]{Lu2021}, in order to define the transposition solution to 
system \ref{system1}, we introduce the following (forward) stochastic evolution equation: 
\begin{equation}\label{system2}
\begin{cases}
    dx(r)=Ax(r)dr+a(t,x(r),\mathcal L(x(r)))dr+b(r,x(r),\mathcal L(x(r)))dW(r),\ r\in (t,T]\\
    x(t)=\eta \in L^2_{\mathcal F_t}(\Omega;H).
\end{cases}    
\end{equation}
where $t\in [0,T]$, $a, \ b$ are measurable in the first variable, continuous with 
respect to the rest of the variables, and for any $(t,x_1,x_2,\nu_1,\nu_2,\mu_1,\mu_2) 
\in [0,T]\times H\times H \times \mathcal P_2(H)\times \cP_2(H)\times \cP_2(H) 
\times \cP_2(H)$,
$$|a(t,x_1,\nu_1)-a(t,x_2,\nu_2)|_H\leq C\Big(|x_1-x_2|_H+\cW_2(\nu_1,\nu_2)\Big),$$
$$|b(t,x_1,\mu_1)-a(t,x_2,\mu_2)|_{\cL_2^0}\leq C\Big(|x_1-x_2|_H+\cW_2(\mu_1,\mu_2)\Big).$$
Then by \cite[Theorem 4.2]{Ahmed2015}, system \ref{system2} has a unique mild solution:
{\small
$$
X(t) = S(t)\zeta + \int_0^t S(t-\tau)a(\tau,X(\tau),\cL(X(\tau)))d\tau 
+ \int_0^t S(t-\tau)b(\tau,X(\tau),\cL(X(\tau)))dW(\tau), 
$$
}

We now introduce the following notion.

\begin{definition}\label{def1}
    We call $(Y(\cdot),Z(\cdot))\in D_{\dbF}([0,T];L^2(\Omega;H))\times L^2_\dbF(0,T;L^2(\Om;\cL_2^0))$ a transposition solution to system \eqref{system1} if for any $t\in [0,T]$, $X(\cdot)\in C_{\dbF}([t,T];L^2(\Omega;H))$ being the mild solution to system \eqref{system2}, it holds that $\langle X(\cdot),F(\cdot,Y(\cdot),Z(\cdot),\cL(Y(\cdot)Z(\cdot)))\rangle_H\in L_{\mathbb F}^1(\Om;L^1(t,T))$, and 
    \begin{eqnarray*}
        & & \mathbb E\langle X(T),Y_T\rangle_H-\mathbb E\int_t^T \langle X(s),F(s,Y(s),Z(s),\cL(Y(s),Z(s)))\rangle_Hds\\
        &=& \mathbb E\langle X(t),Y(t)\rangle_H+ \mathbb E\int_t^T \langle a(s,X(s),\cL(X(s))),Y(s)\rangle_Hds\\
        & & \hspace{2.5cm}+\mathbb E\int_t^T \langle b(s,X(s),\cL(X(s))),Z(s)\rangle_{\cL_2^0}ds.
    \end{eqnarray*}    
\end{definition}

Now, let us show the well-posedness of system \eqref{system1} in the sense of transposition solution. 

\begin{theorem}\label{thm1}
    Under Assumptions \textbf{(B1)-(B2)}, let $F(\cdot,0,0,0)\in L_{\mathbb F}^1(0,T;L^2(\Om;H))$. Then system \eqref{system1} admits a unique transposition solution $D_{\dbF}([0,T];L^2(\Omega;H))$\\
    $\times L^2_\dbF(0,T;L^2(\Om;\cL_2^0))$. Furthermore,
    \begin{equation*}
    \begin{aligned}
    &| Y(\cdot), Z(\cdot))|_{D_{\dbF}([0,T];L^2(\Omega;H))\times L^2_\dbF(0,T;L^2(\Om;\cL_2^0))}\\ 
    &\leq \cC \Big(|Y_T|_{L_{\cF_T}^2(\Omega;H)}+|F(\cdot,0,0,0)|_{L_{\mathbb F}^1(0,T;L^2(\Om;H))}\Big).            
    \end{aligned}
    \end{equation*}
\end{theorem}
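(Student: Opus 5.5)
We prove Theorem \ref{thm1} by a fixed-point argument on the law-dependent part, reducing to the classical transposition theory for $H$-valued BSEEs in \cite[Theorem 4.16]{Lu2021}. The first step is to freeze the measure argument. Given a pair $(\hat Y,\hat Z)$ in
\[
\mathcal X:=D_{\dbF}([0,T];L^2(\Omega;H))\times L^2_\dbF(0,T;L^2(\Om;\cL_2^0)),
\]
we set $\hat\mu(t):=\cL(\hat Y(t),\hat Z(t))$ and consider the BSEE
\[
dY=-A^*Y\,dt+F(t,Y,Z,\hat\mu(t))\,dt+Z\,dW, \qquad Y(T)=Y_T.
\]
The generator $(y,z)\mapsto F(t,y,z,\hat\mu(t))$ is Lipschitz by \textbf{(B2)}. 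For its size at zero we use $\cW_2(\hat\mu(t),\delta_0)\le (\mathbb E|\hat Y(t)|^2+\mathbb E|\hat Z(t)|^2_{\cL_2^0})^{1/2}$, which gives a bound on $F(\cdot,0,0,\hat\mu(\cdot))$. Strictly, \textbf{(B2)} is stated for measures with equal first marginal, so this step needs a mild reinterpretation (alternatively, we restrict the iteration so that the comparison is legitimate). With this, \cite[Theorem 4.16]{Lu2021} yields a unique transposition solution $(Y,Z)=:\Gamma(\hat Y,\hat Z)\in\mathcal X$, together with the a priori estimate of the usual form. The test equation used there is the linear version of \eqref{system2}; we must check that the definition of transposition solution with the general forward equation \eqref{system2} is equivalent to it. This is plausible, since the defining identity depends on $X$ only through its drift and diffusion, so we can treat $a(s,X(s),\cL(X(s)))$ and $b(\cdot)$ as given adapted inputs. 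Consequently, the identity for all linear forward equations with arbitrary sources transfers to \eqref{system2}.

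The second step is to show that $\Gamma$ is a contraction. For two inputs, the differences $(\delta Y,\delta Z)$ solve a linear BSEE whose generator is Lipschitz in $(\delta Y,\delta Z)$. Its source term is
\[
F(t,Y_2,Z_2,\hat\mu_1)-F(t,Y_2,Z_2,\hat\mu_2),
\]
which is bounded in $H$ by $\cC\,\cW_2(\hat\mu_1(t),\hat\mu_2(t))\le \cC\big(\mathbb E|\delta\hat Y(t)|^2+\mathbb E|\delta\hat Z(t)|^2\big)^{1/2}$. Applying the stability estimate of \cite{Lu2021} on a short interval $[T-\delta,T]$ then gives
\[
|\delta Y,\delta Z|_{\mathcal X}\le \cC\,\delta^{1/2}\,|\delta\hat Y,\delta\hat Z|_{\mathcal X},
\]
via Cauchy–Schwarz in time on the $L^1_\dbF(L^2)$ norm of the source. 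Hence $\Gamma$ is a contraction for small $\delta$, and $\delta$ depends only on the Lipschitz constant. Iterating backwards over finitely many intervals, where the terminal value on each subinterval is the $Y$ obtained at the left endpoint of the previous one, gives a unique global fixed point. This fixed point is by construction a transposition solution of \eqref{system1}. Uniqueness follows because any transposition solution is a fixed point of $\Gamma$ on each subinterval.

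The main obstacle is the $Z$-component in the measure argument. The transposition estimate controls $Z$ only in $L^2_\dbF(0,T;L^2(\Om;\cL_2^0))$, so it gives no pointwise-in-$t$ control. The mapping $t\mapsto\cW_2(\hat\mu_1(t),\hat\mu_2(t))$ is therefore only square integrable, and the smallness must come from integrating it against $\delta^{1/2}$ rather than from a supremum norm. We handle this by working throughout with the $L^1_\dbF(0,T;L^2)$ source norm that appears in \cite[Theorem 4.16]{Lu2021}, bounding $\int_{T-\delta}^T(\cdot)\,dt\le\delta^{1/2}\big(\int(\cdot)^2dt\big)^{1/2}$. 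Finally, the stated norm estimate follows from the a priori bound for the fixed point on each subinterval, using $\cW_2(\mu,\delta_0)$ as above, together with a discrete Gronwall argument across the subintervals.
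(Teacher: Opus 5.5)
Your proposal is correct and follows essentially the paper's route. Both arguments run a Banach fixed point on a short terminal interval $[T_1,T]$, using the $L^1_{\mathbb F}(T_1,T;L^2(\Omega;H))$ source estimate from \cite{Lu2021} and Cauchy--Schwarz in time to get the factor $(T-T_1)^{1/2}$ from the $Z$- and Wasserstein terms, then patch backwards and absorb to obtain the a priori bound. The only difference is that the paper freezes the whole argument $(Y',Z',\mathcal L(Y',Z'))$, so each step needs only the linear BSEE with a given source, whereas you freeze just the law and so rely on the Lipschitz-generator version of that theory, reading \textbf{(B2)} as a $\mathcal W_2$-Lipschitz bound for general measures exactly as the paper implicitly does.
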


{\it Proof.} Fix any $T_1\in [0,T)$, for any $(Y^{\prime}(\cdot),Z^{\prime}(\cdot))\in D_{\dbF}([T_1,T];L^2(\Omega;H))\times L^2_\dbF(T_1,T;$\\
$L^2(\Om;\cL_2^0))$, $F(\cdot,Y^{\prime}(\cdot),Z^{\prime}(\cdot), \mathcal L(Y^{\prime}(\cdot),Z^{\prime}(\cdot)))\in  L_{\mathbb F}^1(T_1,T;L^2(\Om;H))$.
Consider the following equation 
{\small
\begin{equation}\label{system3}
\begin{cases}
    dY(t)dt=-A^*Y(t)dt+F(t,Y^{\prime}(\cdot),Z^{\prime}(\cdot),\mathcal L(Y^{\prime}(\cdot),Z^{\prime}(\cdot)))dt+Z(t)dW(t),\ t\in [T_1,T),\\
    Y(T)=Y_T.
\end{cases}
\end{equation}
}
is a standard non McKean-Vlasov BSEE. 
Then by \cite[Chapter 4]{Lu2021}, it admits a unique transposition solution 
$(Y(\cdot),Z(\cdot))\in D_{\dbF}([T_1,T];L^2(\Omega;H))\times L^2_\dbF(T_1,T;L^2(\Om;\cL_2^0))$. This defines a map $\Phi$ from $D_{\dbF}([T_1,T];L^2(\Omega;H))\times L^2_\dbF(T_1,T;\\
L^2(\Om;\cL_2^0))$ into itself by $(Y^{\prime },Z^{\prime})\mapsto (Y,Z)$. 

We claim that for $T_1$ being sufficiently close to $T$, $\Phi$ is a contraction, i.e., 
\begin{eqnarray}\label{eq-3.4}
    & & \Big|\Phi(Y^{\prime },Z^{\prime})-\Phi(Y^{\prime\prime },Z^{\prime\prime})\Big|_{D_{\dbF}([T_1,T];L^2(\Omega;H))\times L^2_\dbF(T_1,T;L^2(\Om;\cL_2^0))}\\
    &\leq & \frac{1}{2}\big|(Y^{\prime },Z^{\prime})-(Y^{\prime\prime },Z^{\prime\prime})\big|_{D_{\dbF}([T_1,T];L^2(\Omega;H))\times L^2_\dbF(T_1,T;L^2(\Om;\cL_2^0))},\nonumber\\
    & & \qq \forall \   (Y^{\prime },Z^{\prime}),\ (Y^{\prime\prime },Z^{\prime\prime})\in D_{\dbF}([T_1,T];L^2(\Omega;H))\times L^2_\dbF(T_1,T;L^2(\Om;\cL_2^0)).\nonumber
\end{eqnarray}
To show this, put $\hat f(\cdot)=F(\cdot,Y^{\prime },Z^{\prime},\mathcal L(Y^{\prime }(\cdot),Z^{\prime}(\cdot)))-F(\cdot,Y^{\prime\prime }(\cdot),Z^{\prime\prime }(\cdot),\mathcal L(Y^{\prime\prime }(\cdot),Z^{\prime\prime }(\cdot)))$ and $(\hat Y(\cdot),\hat Z(\cdot))=\Phi(Y^{\prime },Z^{\prime})-\Phi(Y^{\prime\prime },Z^{\prime\prime}) $. Then $(\hat Y(\cdot), \hat Z(\cdot))$ satisfies the following standard non McKean-Vlasov BSEE
\begin{equation*}
    \begin{cases}
        d\hat Ydt=-A\hat Y(t)dt+\hat f(t)dt+\hat Z(t)dW(t), \ t\in [T_1,T),\\
        \hat Y(T)=0.
    \end{cases}
\end{equation*}
Applying the well-posedness result from \cite[Chapter 4]{Lu2021}, together with assumption {\bf (B2)}, we obtain the following estimate:
{\small
\begin{equation}\label{eq-3.5}
\begin{aligned}
   &   \qquad |(\hat Y(\cdot), \hat Z(\cdot))|_{D_{\dbF}([T_1,T];L^2(\Omega;H))\times L^2_\dbF(T_1,T;L^2(\Om;\cL_2^0))}\\
   &  \leq  \cC |F(\cdot,Y^{\prime}(\cdot),Z^{\prime}(\cdot),\mathcal L(Y^{\prime}(\cdot),Z^{\prime}(\cdot))-F(\cdot,Y^{\prime\prime }(\cdot),Z^{\prime\prime }(\cdot),\mathcal L(Y^{\prime\prime }(\cdot),Z^{\prime\prime }(\cdot)))|_{L_{\mathbb F}^1(T_1,T;L^2(\Om;H))}\\
   & \leq  \cC\int_{T_1}^T\Big( \mathbb E\Big(|Y^{\prime}(t)-Y^{\prime\prime }(t)|_H+|Z^{\prime}(t)-Z^{\prime\prime }(t)|_{\cL_2^0}\\
   &\hspace{2.1cm}+\cW_2(\cL(Y^{\prime}(t),Z^{\prime}(t)),\cL(Y^{\prime\prime }(t),Z^{\prime\prime }(t)))\Big)^2\Big)^{\frac{1}{2}}dt\\
   & \leq  \cC \int_{T_1}^T\Big( \mathbb E|Y^{\prime}(t)-Y^{\prime\prime }(t)|_H^2\Big)^{\frac{1}{2}}dt+\int_{T_1}^T\Big( \mathbb E|Z^{\prime}(t)-Z^{\prime\prime }(t)|_{\cL_2^0}^2\Big)^{\frac{1}{2}}dt   \\
   &  +\int_{T_1}^T  \Big(\mathbb E\Big(\cW_2(\cL(Y^{\prime}(t),Z^{\prime}(t)),\cL(Y^{\prime\prime }(t),Z^{\prime\prime }(t)))\Big)^2\Big)^{\frac{1}{2}}dt\\
   & \leq  \cC(T-T_1) \Big(\sup_{t\in [T_1,T]} \mathbb E|Y^{\prime}(t)-Y^{\prime\prime }(t)|_H^2\Big)^{\frac{1}{2}}+ \cC (T-T_1)^{\frac{1}{2}}\Big(\int_{T_1}^T\Big( \mathbb E|Z^{\prime}(t)-Z^{\prime\prime }(t)|_{\cL_2^0}^2\Big)dt\Big)^{\frac{1}{2}}\\
   &  + \cC \int_{T_1}^T\Big( \mathbb E|Y^{\prime}(t)-Y^{\prime\prime }(t)|_H^2+\mathbb E|Z^{\prime}(t)-Z^{\prime\prime }(t)|_{\cL_2^0}^2\Big)^{\frac{1}{2}}dt\\
   & \leq  \cC (T-T_1)^{\frac{1}{2}} |( Y^{\prime}(\cdot)-Y^{\prime\prime }(\cdot), Z^{\prime}(\cdot)-Z^{\prime\prime }(\cdot))|_{D_{\dbF}([T_1,T];L^2(\Omega;H))\times L^2_\dbF(T_1,T;L^2(\Om;\cL_2^0))}.
\end{aligned}
\end{equation}
}
Let us choose $T_1$, such that 
\begin{equation}\label{eq-1.5}
    \cC (T-T_1)^{1/2}\leq \frac{1}{2}.
\end{equation}
Then $\Phi$ is contractive. Hence it admits a unique fixed point, which is a transposition solution to \eqref{system3} on $[T_1,T]$.

Moreover, when $T_1$ satisfies \eqref{eq-1.5}, an argument similar to \eqref{eq-3.5} yields 
\begin{eqnarray}\label{eq-1.6}
   & &  | Y(\cdot), Z(\cdot))|_{D_{\dbF}([T_1,T];L^2(\Omega;H))\times L^2_\dbF(T_1,T;L^2(\Om;\cL_2^0))}\\
   &  \leq & \cC \Big(|Y_T|_{L_{\cF_T}^2(\Omega;H)}+|F(\cdot,Y^{\prime}(\cdot),Z^{\prime}(\cdot),\mathcal L(Y^{\prime}(\cdot),Z^{\prime}(\cdot)))|_{L_{\mathbb F}^1(T_1,T;L^2(\Om;H))}\Big)\nonumber\\
   & \leq & \cC \Big(|Y_T|_{L_{\cF_T}^2(\Omega;H)}+|F(\cdot,0,0,0)|_{L_{\mathbb F}^1(T_1,T;L^2(\Om;H))}\Big)\nonumber\\
   & & +\cC \Big( (T-T_1)+(T-T_1)^{1/2}\Big) |( Y^{\prime}(\cdot), Z^{\prime}(\cdot))|_{D_{\dbF}([T_1,T];L^2(\Omega;H))\times L^2_\dbF(T_1,T;L^2(\Om;\cL_2^0))}\nonumber\\
   & \leq & \cC \Big(|Y_T|_{L_{\cF_T}^2(\Omega;H)}+|F(\cdot,0,0,0)|_{L_{\mathbb F}^1(T_1,T;L^2(\Om;H))}\Big).\nonumber
\end{eqnarray}
Repeating the above argument, we obtain the existence of transposition solution to \eqref{system1}. The uniqueness follows from estimate \eqref{eq-1.6}. 
\endpf



\subsection{Regularity of second-order adjoint equation}

In this section, we give a brief introduction of the relaxed transposition solution, which gives the well-posedness to \eqref{ad-eq2}, see \cite{Lu2014}, \cite{Lu2021} for more detailed discussion.

Consider the following formally $\mathcal L(H)$-valued backward stochastic evolution equation:
\begin{equation}\label{eq-4.1-3}
\begin{cases}
    dP=-(A^*+J^*)Pdt-P(A+J)dt-K^*PKdt\\
    \hspace{1cm} -(K^*Q+QK)dt+Fdt+QdW(t),\hspace{1cm} \mbox{in } [0,T),\\
    P(T)=P_T,
\end{cases}
\end{equation}
where (q=p/(p-1)),
\begin{equation}\label{eq-4.1-4}
    J\in L_{\mathbb F}^{2q}(0,T;L^{\infty}(\Omega;\mathcal L(H))),\ K\in L_{\mathbb F}^{2q}(0,T;L^{\infty}(\Omega;\mathcal L(H;\mathcal L_2^0))),
\end{equation}
\begin{equation}\label{eq-4.1-5}
    F\in L_{\mathbb F}^1(0,T;L^{p}(\Omega;\mathcal L(H))),\ P_T\in L_{\mathcal F_T}^{p}(\Omega;\mathcal L(H)).
\end{equation}

Let
\begin{equation*}
    \begin{array}{ll}\ds
        \mathcal{L}_{pd}(L_{\mathbb{F}}^2(0,T;L^{4}(\Omega,H));L^2_{\mathbb{F}}(0,T;L^{\frac{4}{3}}(\Omega,H)))\\
        \ns\ds \deq \Big\{L\!\in\! \cL\big(L_{\mathbb{F}}^2(0,T;L^{4}(\Omega,H));L^2_{\mathbb{F}}(0,T;L^{\frac{4}{3}}(\Omega,H))\big) \big| \mbox{for }\ae (t,\omega)\in [0,T]\times\Omega, \mbox{there
            is }\\
        \ns\ds\q \wt L(t,\omega)\!\in\!\mathcal{L}(H)\;  \mbox{such that } \big( L v(\cd)\big)(t,\omega)
        = \wt L (t,\omega)v(t,\omega),
        \forall\; v(\cd)\in
        L_{\mathbb{F}}^2(0,T;L^{4}(\Omega,H))\Big\}.
    \end{array}
\end{equation*}
In the sequel, if there is no confusion,  we identify $L\in \mathcal{L}_{pd}(L_{\mathbb{F}}^2(0,T;L^{4}(\Omega,H));$\\
$L^2_{\mathbb{F}}(0,T; L^{\frac{4}{3}}(\Omega,H)))$ with $\wt L(\cd,\cd)$.

Let
\begin{eqnarray*}
    \cP[0,T] & \deq & \big\{P(\cdot,\cdot)\ |\ P(\cdot,\cdot)\in \mathcal{L}_{pd}(L_{\mathbb{F}}^2(0,T;L^{4}(\Omega,H));L^2_{\mathbb{F}}(0,T;L^{\frac{4}{3}}(\Omega,H))),\\
    & & \ \ \ P(\cdot,\cdot)\xi \in D_{\mathbb{F}}([t,T];L^{\frac{4}{3}}(\Omega,H))) \ \textup{and} \ |P(\cdot,\cdot)\xi|_{D_{\mathbb{F}}([t,T];L^{\frac{4}{3}}(\Omega,H))}\\
    & & \ \ \ \leq C |\xi|_{L_{\mathcal{F}_t}^{4}(\Omega;H)} \ \textup{for every}\ t\in [0,T] \ \textup{and} \ \xi \in L_{\mathcal{F}_t}^{4}(\Omega;H)\big\},
\end{eqnarray*}
and
\begin{eqnarray*}
 \mathcal{Q} [0,T]
& \deq &\big\{(Q^{(\cdot)},\widehat{Q}^{(\cdot)})\ |\ Q^{(t)},\widehat{Q}^{(t)}\in \mathcal{L}(\mathcal{H}_t;L_{\mathbb{F}}^2(t,T;L^{\frac{4}{3}}(\Omega;\mathcal{L}_2^0)))\\
    & & \hspace{0.8cm} \textup{and}\  Q^{(t)}(0,0,\cdot)^{*}=\widehat{Q}^{(t)}(0,0,\cdot) \ \textup{for any}\  t\in [0,T)\big\}
\end{eqnarray*}
with
\begin{equation*}
    \mathcal{H}_t\deq L_{\mathcal{F}_t}^{4}(\Omega;H)\times L_{\mathbb{F}}^2(t,T;L^{4}(\Omega;H))\times L_{\mathbb{F}}^2(t,T;L^{4}(\Omega;\mathcal{L}_2^0)),\ \ \forall \  t\in [0,T).
\end{equation*}
For $j=1, 2$ and $t\in [0,T)$, consider the following equation:
\begin{equation}\label{test-eq1}
    \begin{cases}
        d\f_j=(A+J)\f_jds+u_jds+K\f_jdW(s)+v_jdW(s) &\textup{ in } (t,T],\\
        \f_j(t)=\xi_j
    \end{cases}
\end{equation}
where $\xi_j \in L_{\mathcal{F}_t}^{4}(\Omega;H)$, $u_j\in L_{\mathbb{F}}^2(t,T;L^{4}(\Omega;H)) $ and $v_j \in
L_{\mathbb{F}}^2(t,T; L^{4}(\Omega;\mathcal{L}_2^0))$.  By the classical well-posedness result for SEEs, we know that \eqref{test-eq1} has a unique mild solution $\f_j\in C_\dbF([t,T];L^4(\Om;H))$ (e.g.,\cite[Section 3.2]{Lu2021}).

\begin{definition}\label{def2.1} A 3-tuple $(P(\cdot), Q^{(\cdot)},\widehat{Q}^{(\cd)})\in
    \mathcal{P}[0,T]\times \mathcal{Q}[0,T]$ is called a
    relaxed transposition solution to the equation \eqref{eq-4.1-3}  if for any $t\in
    [0,T]$, $\xi_j \in L_{\mathcal{F}_t}^{4}(\Omega; H)$, $
    u_j(\cdot)\in L_{\mathbb{F}}^2(t,T;L^{4}(\Omega;H))$
    and $v_j(\cdot)\in
    L_{\mathbb{F}}^2(t,T;L^{4}(\Omega;\mathcal{L}_2^0))$ ($j=1,2$), it holds that
    \begin{eqnarray*}
        & & \mathbb{E}\lan P_T\f_1(T),\f_2(T)\ran_H-\mathbb{E}\int_t^T \lan F(s)\f_1(s),\f_2(s)\ran_H ds\\
        & & =\mathbb{E}\langle P(t)\xi_1,\xi_2\rangle_H+\mathbb{E}\int_t^T \lan P(s)u_1(s),\f_2(s)\ran_Hds+\mathbb{E}\int_t^T \lan P(s)\f_1(s),u_2(s)\ran_Hds\\
        & & \ \ \ +\mathbb{E}\int_t^T \lan P(s)K(s)\f_1(s),v_2(s)\ran_{\mathcal{L}_2^0}ds +\mathbb{E}\int_t^T \lan P(s)v_1(s),K(s)\f_2(s)+v_2(s)\ran_{\mathcal{L}_2^0}ds\\
        & & \ \ \ +\mathbb{E}\int_t^T \lan v_1(s),\widehat{Q}^{(t)}(\xi_2,u_2,v_2)(s)\ran_{\mathcal{L}_2^0}ds +\mathbb{E}\int_t^T \lan  Q^{(t)}(\xi_1,u_1,v_1)(s),v_2(s)\ran_{\mathcal{L}_2^0}ds
    \end{eqnarray*}
\end{definition}

As an immediate corollary of \cite[Theorem 12.9]{Lu2021}, we have
the following well-posedness result for the equation \eqref{ad-eq2}.

\begin{proposition}\label{prp4.1} For any $F,\ J, \ K$ and $P_T$ satisfying \eqref{eq-4.1-4} and \eqref{eq-4.1-5}, the equation \eqref{eq-4.1-3}
    admits a unique relaxed transposition solution $(P(\cdot),
    Q^{(\cdot)}, \widehat{Q}^{(\cdot)})$. Furthermore,
    \begin{eqnarray}
        & & |P|_{\mathcal{L}(L_{\mathbb{F}}^2(0,T;L^{4}(\Omega,H));L^2_{\mathbb{F}}(0,T;L^{\frac{4}{3}}(\Omega,H)))} + \sup\limits_{t\in[0,T)}|(Q^{(t)},\widehat{Q}^{(t)})|_{ \mathcal{L}(\mathcal{H}_t;L_{\mathbb{F}}^2(t,T;L^{\frac{4}{3}}(\Omega;\mathcal{L}_2^0)))^2}\nonumber\\
        & & \leq C\big(|F|_{L_{\mathbb{F}}^1(0,T;L^2(\Omega;\mathcal{L}(H)))}+|P_T|_{L_{\mathcal{F}_T}^2(\Omega;\mathcal{L}(H))}\big).
    \end{eqnarray}
\end{proposition}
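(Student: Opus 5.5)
The plan is to obtain Proposition \ref{prp4.1} directly from \cite[Theorem 12.9]{Lu2021}. The only work is to line up the present setting with the one there, so that the conclusion transfers verbatim. No problem-specific coefficients enter: the statement concerns arbitrary $F$, $J$, $K$, $P_T$ subject to \eqref{eq-4.1-4}--\eqref{eq-4.1-5}.

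\emph{Step 1: the equation and the test systems.} I will check that \eqref{eq-4.1-3} is exactly the $\mathcal L(H)$-valued BSEE treated in \cite[Chapter 12]{Lu2021}, with the same sign conventions. The drift should read $-(A^*+J^*)P-P(A+J)-K^*PK-(K^*Q+QK)+F$, with diffusion $Q\,dW$. The pair of forward test equations \eqref{test-eq1} should coincide with the ones used there. Since $J\in L^{2q}_{\mathbb F}(0,T;L^\infty(\Omega;\mathcal L(H)))$ and $K\in L^{2q}_{\mathbb F}(0,T;L^\infty(\Omega;\mathcal L(H;\mathcal L^0_2)))$, the standard SEE theory (\cite[Section 3.2]{Lu2021}) gives unique mild solutions $\varphi_j\in C_{\mathbb F}([t,T];L^4(\Omega;H))$. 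These come with the bound
$$
|\varphi_j|_{C_{\mathbb F}([t,T];L^4(\Omega;H))}\le C\big(|\xi_j|_{L^4_{\mathcal F_t}(\Omega;H)}+|u_j|_{L^2_{\mathbb F}(t,T;L^4(\Omega;H))}+|v_j|_{L^2_{\mathbb F}(t,T;L^4(\Omega;\mathcal L_2^0))}\big),
$$
uniformly in $t$. This makes every term in Definition \ref{def2.1} meaningful. For example, $\mathbb E\int_t^T\langle F\varphi_1,\varphi_2\rangle\,ds$ is controlled via H\"older with exponents $(2,4,4)$ once $F\in L^1_{\mathbb F}(0,T;L^2(\Omega;\mathcal L(H)))$.

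\emph{Step 2: the solution spaces.} I will verify that $\mathcal P[0,T]$, $\mathcal Q[0,T]$ and $\mathcal H_t$ above are precisely the spaces in \cite{Lu2021} for the exponent choice $p=2$ (so $4=2p$ and $4/3=(2p)'$). I will also confirm that Definition \ref{def2.1} is the relaxed transposition identity used there.

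\emph{Step 3: the integrability exponents.} Our hypotheses are stated for general $p$, while the estimate is written with $L^2$ norms. I will therefore specialize to $p=2$, $q=2$, which is the case actually needed for \eqref{ad-eq2}. The main point to watch is this consistency: the theorem in \cite{Lu2021} is formulated with $L^p$ data, and the a priori bound there is in terms of $|F|_{L^1_{\mathbb F}(0,T;L^p(\Omega;\mathcal L(H)))}+|P_T|_{L^p_{\mathcal F_T}(\Omega;\mathcal L(H))}$. With $p=2$ this matches the displayed estimate. If a general $p$ is intended instead, I would read the norms on the right-hand side as $L^p$ norms and use $L^p\subset L^2$ for $p\ge2$.

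\emph{Step 4: conclusion.} Given Steps 1--3, \cite[Theorem 12.9]{Lu2021} yields existence and uniqueness of $(P,Q^{(\cdot)},\widehat Q^{(\cdot)})\in\mathcal P[0,T]\times\mathcal Q[0,T]$, together with the stated bound. Here the operator norm of $P$ is the one from $L^2_{\mathbb F}(0,T;L^4)$ to $L^2_{\mathbb F}(0,T;L^{4/3})$, and the bound on $(Q^{(t)},\widehat Q^{(t)})$ is uniform in $t\in[0,T)$.
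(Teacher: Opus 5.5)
Your proposal matches the paper, which obtains Proposition \ref{prp4.1} as an immediate corollary of \cite[Theorem 12.9]{Lu2021} with no further argument. Your reading $p=q=2$ is the one that makes \eqref{eq-4.1-4}--\eqref{eq-4.1-5}, the $L^4$/$L^{4/3}$ spaces and the $L^2$ norms in the estimate consistent, and it is all that \eqref{ad-eq2} needs. You should drop the fallback for $p>2$, though: there $2q<4$, so $J,K\in L^{2q}_{\mathbb F}(0,T;L^\infty)$ no longer gives the $L^4$-in-time integrability of the $p=2$ case, and $L^p\subset L^2$ alone does not reduce that case to the cited theorem.
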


Now we verify that the coefficients of equation \eqref{ad-eq2} satisfy \eqref{eq-4.1-4} and \eqref{eq-4.1-5}. In this case, we have that 
$$
\begin{cases}
J(t)=a_x(t),\q K(t)=b_x(t), \\
\ns\ds F(t) = \mathbb{H}_{xx}(t)  
+ \widetilde{\mathbb E} \big[\widetilde{\mathbb H}_{y\mu}(t)(\overline{X}(t))\big] ,\\
\ns\ds P_T = - h_{xx}\left(\overline{X}(T), \mathcal L\left( \overline{X}(T)\right)\right) 
   -\tilde{\mathbb{E}}\big[h_{y\mu} (\widetilde{\overline{X}}(T), \mathcal L(\widetilde{\overline{X}}(T)))(\overline{X}(T))\big].
\end{cases}
$$
From Assumption {\bf (A)}, it is clear that $J,\ K, F, \ P_T$ satisfy \eqref{eq-4.1-4} and \eqref{eq-4.1-5}. Therefore, we have the well-posedness of the second-order adjoint equation \eqref{ad-eq2} in relaxed tranposition solution sense:
\begin{proposition}\label{prop-4.2}
Under Assumption {\bf (A)}, equation \eqref{ad-eq2} admits a unique relaxed transposition solution.    
\end{proposition}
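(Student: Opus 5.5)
The plan is to obtain Proposition \ref{prop-4.2} as a direct application of Proposition \ref{prp4.1}, i.e.\ of \cite[Theorem 12.9]{Lu2021}. The equation \eqref{ad-eq2} is identified with \eqref{eq-4.1-3} by taking
$$J(t)=a_x(t),\quad K(t)=b_x(t),\quad F(t)=-\mathbb H_{xx}(t)-\widetilde{\mathbb E}\big[\widetilde{\mathbb H}_{y\mu}(t)(\overline X(t))\big],$$
with terminal datum $P_T$ as in \eqref{ad-eq2}. The sign of $F$ is chosen to match the convention in \eqref{eq-4.1-3}; only its norm matters for the hypotheses. It then suffices to check \eqref{eq-4.1-4} and \eqref{eq-4.1-5} for some $p>1$, for instance $p=2$ (so $q=2$), together with the adaptedness and measurability built into these spaces.

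\emph{Step 1: $J$ and $K$.} By Assumption {\bf (A)}(i)--(ii), $a_x$ and $b_x$ are uniformly bounded in $\mathcal L(H)$ and $\mathcal L(H;\mathcal L_2^0)$. Since $\overline X$ is adapted and continuous in $L^2$, $t\mapsto\mathcal L(\overline X(t))$ is deterministic and $\mathcal W_2$-continuous. Also $\bar u$ is adapted. Hence $J$ and $K$ are $\mathbb F$-adapted, essentially bounded processes, so they lie in $L^{2q}_{\mathbb F}(0,T;L^\infty(\Omega;\cdot))$ for every $q$.

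\emph{Step 2: $F$ and $P_T$.} Write $\mathbb H_{xx}(t)=\langle p(t),a_{xx}(t)\cdot\rangle+\langle q(t),b_{xx}(t)\cdot\rangle_{\mathcal L_2^0}-f_{xx}(t)$. Boundedness of the second derivatives gives
$$|\mathbb H_{xx}(t)|_{\mathcal L(H)}\le \cC\big(1+|p(t)|_H+|q(t)|_{\mathcal L_2^0}\big).$$
For the mean-field term, conditional Jensen and the boundedness of $a_{y\mu},b_{y\mu},f_{y\mu}$ give
$$\big|\widetilde{\mathbb E}[\widetilde{\mathbb H}_{y\mu}(t)(\overline X(t))]\big|\le \cC\big(1+\widetilde{\mathbb E}|\tilde p(t)|+\widetilde{\mathbb E}|\tilde q(t)|_{\mathcal L_2^0}\big).$$
The right-hand side is deterministic and square integrable in $t$. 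By Theorem \ref{thm1}, $(p,q)\in D_{\mathbb F}([0,T];L^2(\Omega;H))\times L^2_{\mathbb F}(0,T;L^2(\Omega;\mathcal L_2^0))$. The $p$-part is therefore uniformly bounded in $L^2(\Omega)$, and the $q$-part is in $L^2(0,T;L^2)\subset L^1(0,T;L^2)$. Hence $F\in L^1_{\mathbb F}(0,T;L^2(\Omega;\mathcal L(H)))$. The terminal datum $P_T$ is bounded by {\bf (A)}(iv) and $\mathcal F_T$-measurable, so $P_T\in L^2_{\mathcal F_T}(\Omega;\mathcal L(H))$.

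\emph{Step 3: measurability.} Before the bounds in Step 2 make sense, the expressions $\widetilde{\mathbb E}[\tilde\varphi_{y\mu}(t)(\overline X(t))]$ must be genuine adapted $\mathcal L(H)$-valued processes. This is the main obstacle and is handled first. Lemma \ref{lemma:joint measurability}, applied to the $y$-derivative of $\partial_\mu\varphi$, gives a jointly measurable version of $(x,\mu,y)\mapsto\varphi_{y\mu}(t,x,\mu,u)(y)$. Fubini on $\widetilde\Omega\times\Omega$ then shows that integrating out $\tilde\omega$ leaves a function of $(t,\overline X(t),\omega)$ that is $\mathcal F_t$-measurable. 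Since $\mathcal L(H)$ is nonseparable, this is done in the strong-operator sense, by testing against $h_1,h_2$ from a countable dense set. This is exactly the sense required by the $\mathcal L_{pd}$ framework in \cite{Lu2021}.

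With Steps 1--3 in place, Proposition \ref{prp4.1} yields existence and uniqueness of a relaxed transposition solution $(P,Q^{(\cdot)},\widehat Q^{(\cdot)})$ of \eqref{ad-eq2}, together with the stated estimate.
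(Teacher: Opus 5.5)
Your proposal is correct and follows the same route as the paper. You identify $J=a_x$, $K=b_x$, $F$ and $P_T$ in \eqref{eq-4.1-3}, verify \eqref{eq-4.1-4}--\eqref{eq-4.1-5} with $p=2$ (matching the $L^4/L^{4/3}$ spaces), and invoke Proposition~\ref{prp4.1}. Your version is more careful than the paper's one-line appeal to {\bf (A)}: you take the sign of $F$ consistent with \eqref{eq-4.1-3}, and you correctly note that $F$ is not bounded but only lies in $L^1_{\mathbb F}(0,T;L^2(\Omega;\mathcal L(H)))$, because $\mathbb H_{xx}$ and $\widetilde{\mathbb E}[\widetilde{\mathbb H}_{y\mu}]$ involve $(p,q)$, so the integrability of the first-order adjoint is genuinely needed.
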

%

%


    %
    %



\section{Proof of Pontryagin Maximum Principle}

With the estimates we established in Subsection \ref{subsec-VE}, we are now ready to prove our main result Theorem \ref{thm4.1} of this paper.  

Let us compute the variation of the cost functional. Using Proposition \ref{prop4.3}, \ref{prop-3.2} and \ref{prop4.4} 
\begin{equation}\label{eq-3.11}
\begin{aligned}
    0 & \leq   \cJ(u^{\e}(\cdot))-\cJ(\bar u(\cdot))\\
    & =  \mathbb E\Big[\int_0^T \big\{f(t, X^{\varepsilon}(t),\mathcal L( X^{\varepsilon}(t)),u^{\varepsilon}(t))- f(t,\overline X(t),\mathcal L(\overline X(t)),\bar u(t))\big\}dt\Big]\\
    &  +\mathbb E\Big[h(X^{\varepsilon}(T),\mathcal L(X^{\varepsilon}(T))-h(\overline X(T),\mathcal L (\overline X(T)))\Big]\\
    & = \mathbb E\Big[\int_0^T \big\{f_x(t)(y^{\varepsilon}(t)+z^{\varepsilon}(t))+\widetilde{\mathbb E}\big [f_{\mu}(t)(\widetilde{\overline{X}}(t))(\tilde y^{\varepsilon}(t)+\tilde z^{\varepsilon}(t))\big]\big\}dt\Big] \\
        &  + \mathbb E \Big[h_x(\overline X(T),\mathcal L(\overline X(T)))(y^{\varepsilon}(T)+z^{\varepsilon}(T))\\
        &\hspace{1cm}+ \widetilde{\mathbb E} \big[h_{\mu}(\overline X(T),\mathcal L(\overline X(T)))(\widetilde{\overline{X}}(T))(\tilde y^{\varepsilon}(T)+\tilde z^{\varepsilon}(T))\big] \Big]\\
       &  +  \mathbb E\Big[\int_0^T \big (\delta f(t)\chi_{E_{\varepsilon}}(t)  
       + \frac{1}{2} f_{xx}(t)[y^{\varepsilon}(t), y^\varepsilon (t)]  
       + \frac{1}{2}\widetilde{\mathbb E}\big[f_{y\mu}(t)(\widetilde{\overline{X}}(t)) [\tilde y^{\varepsilon}(t), \tilde{y}^\varepsilon (t)] \big] \big) dt  \Big]\\
       &  + \mathbb E\Big[\int_0^T \big(\widetilde{\mathbb E}\big[f_{\mu x}(t)(\widetilde{\overline{X}}(t))[\tilde y^{\varepsilon}(t), y^{\varepsilon}(t)]\big] 
       + \frac{1}{2}\widehat{\mathbb E}\widetilde{\mathbb E}\big[f_{\mu\mu}(t)(\widetilde{\overline{X}}(t), \widehat{\overline{X}} (t)) [\tilde y^{\varepsilon}(t), \widehat{y}^{\varepsilon}(t)]\big]\big) dt  \Big]\\
       &  +\frac{1}{2}\mathbb E \Big[ h_{xx}(T)[y^{\varepsilon}(T),y^{\varepsilon}(T)] 
       + \widetilde {\mathbb E}\big[ h_{y\mu}(T)(\widetilde{\overline{X}}(T))[\tilde{y}^{\varepsilon}(T), \tilde{y}^\varepsilon (T) ]\big]\Big]\\
       &  + \mathbb E\Big[\frac{1}{2}\widehat{\mathbb E}\widetilde{\mathbb E}\big[h_{\mu\mu}(T)(\widetilde{\overline{X}}(T), \widehat{\overline{X}}(T))[\tilde y^{\varepsilon}(T), \widehat{y}^{\varepsilon}(T)]\big] 
       + \widetilde {\mathbb E}\big[ h_{\mu x}(T)(\widetilde{\overline{X}}(T))[\tilde y^{\varepsilon}(T), y^\varepsilon (T)]\big]\Big] \\
       &  + o(\varepsilon).   
\end{aligned}
\end{equation}
Here, $(\widehat{\overline{X}}, \widehat{y}^\varepsilon)$ are independent copies of 
$(X, y^\varepsilon )$ (and independent of $(\widetilde{\overline{X}}, \tilde{y}^\varepsilon )$), 
on a separate probability space with corresponding expectation $\widehat{\mathbb{E}}$. 

To get rid of the variational processes, we need the following estimates and duality relations between 
the variational systems \eqref{eq-4.4}-\eqref{eq-4.5} and the adjoint equations \eqref{1-adeq} and 
\eqref{ad-eq2}. By Corollary \ref{cor3.1}, we have 
\begin{equation}\label{eq-4-2}
\begin{aligned}
    &  \Big|\mathbb E\Big[\int_0^T \widetilde{\mathbb E}\big[f_{\mu x}(t)(\widetilde{\overline{X}}(t))[\tilde y^{\varepsilon}(t), y^{\varepsilon}(t)]\big] dt \Big] \Big| 
    = \Big|\int_0^T \mathbb E\Big[\widetilde{\mathbb E}\big[f_{\mu x}(t)(\widetilde{\overline{X}}(t)) [\tilde y^{\varepsilon}(t), y^{\varepsilon}(t)]\big] \Big] dt\Big|\\
    & \leq  \cC\Big( \int_0^T \mathbb E \Big[ \big|\widetilde{\mathbb E}\big[f_{\mu x}(t)(\widetilde{\overline{X}}(t))[\cdot , \tilde y^{\varepsilon}(t)]\big]\big|^2 \Big] dt\Big)^{\frac{1}{2}} 
    \Big(\int_0^T \mathbb E \big[ | y^{\varepsilon}(t)|^2 \big] dt\Big)^{\frac{1}{2}}\\
    & =  o(\varepsilon^{\frac{1}{2}}) \cdot O(\varepsilon^\frac{1}{2})=o(\varepsilon)
\end{aligned}
\end{equation}
and 
\begin{equation}\label{eq-4-3}
\begin{aligned}
   &  \Big|\mathbb E\Big[\int_0^T \widehat{\mathbb E}\widetilde{\mathbb E}\big[f_{\mu\mu}(t)(\widetilde{\overline{X}}(t), \widehat{\overline{X}} (t)) [\tilde y^{\varepsilon}(t), \widehat{y}^{\varepsilon}(t)]\big] dt  \Big]\Big|\\
   &\leq  \mathbb E\Big[\int_0^T \widehat{\mathbb E}\big|\widetilde{\mathbb E} 
   \big[f_{\mu\mu}(t)(\widetilde{\overline{X}}(t), \widehat{\overline{X}} (t)) [\cdot , \tilde y^{\varepsilon}(t)]\big] \big|^2 \Big]^{\frac{1}{2}} \widehat{\mathbb E}\big[\big|\widehat y^{\varepsilon}(t)\big|^2 \big]^{\frac 12} dt \Big]\\
   & =  o(\varepsilon).
\end{aligned}
\end{equation}
Similarly, we have 
\begin{equation}\label{eq-4-4}
    \Big|\widetilde {\mathbb E}\big[ h_{\mu x}(T)(\widetilde{\overline{X}}(T))[\tilde y^{\varepsilon}(T), y^{\varepsilon}(T)]\big]\Big|=  o(\varepsilon).
\end{equation}
and 
\begin{equation}\label{eq-4-5}
    \Big|\mathbb E\widehat{\mathbb E}\widetilde{\mathbb E}\big[h_{\mu\mu}(T)(\widetilde{\overline{X}}(T), \widehat{\overline{X}} (T))[\tilde y^{\varepsilon}(T), \widehat{y}^{\varepsilon}(T)]\big]  \Big|=o(\varepsilon).
\end{equation}

Since $\overline{X}, y^\varepsilon, z^\varepsilon$ and $\widetilde{\overline{X}}, \tilde{y}^\varepsilon, \tilde{z}^\varepsilon$ have identical distribution we can rewrite 
$$
\mathbb{E} \tilde{\mathbb{E}} [f_\mu (t) (\widetilde{\overline{X}}(t)) (\tilde{y}^\varepsilon (t) + \tilde{z}^\varepsilon (t))] 
= \mathbb{E} \tilde{\mathbb{E}} [\tilde{f}_\mu (t) (\overline X (t)) (y^\varepsilon (t) + z^\varepsilon (t))] 
$$
and similar for the term $h_\mu$ in the expansion \eqref{eq-3.11}.  
Therefore we can reduce \eqref{eq-3.11} to  
\begin{equation}\label{eq-3.11-b}
\begin{aligned}
    0 & \leq   \cJ(u^{\e}(\cdot))-\cJ(\bar u(\cdot))\\
    & = \mathbb E\Big[\int_0^T \big\{(f_x(t)+\widetilde{\mathbb E}\big [ \tilde{f}_{\mu}(t)(\overline X(t))\big] )(y^{\varepsilon}(t)+z^{\varepsilon}(t))\big\}dt\Big] \\
        &  + \mathbb E \Big[ (h_x(\overline X(T),\mathcal L(\overline X(T))) 
        + \widetilde{\mathbb E} \big[\tilde{h}_{\mu} (\widetilde{\overline{X}}(T),\mathcal L(\widetilde{\overline{X}}(T)))( \overline X(T))
         (y^{\varepsilon}(T)+z^{\varepsilon}(T))\big] \Big]\\
       &  +  \mathbb E\Big[\int_0^T \big (\delta f(t)\chi_{E_{\varepsilon}}(t) 
       + \frac{1}{2} \big( f_{xx}(t) [y^\varepsilon (t), y^\varepsilon (t)] \\
       & \hspace{1.5cm}+ 
       \widetilde{\mathbb E}\big[  f_{y\mu}(t)(\widetilde{\overline{X}}(t)) [\tilde{y}^{\varepsilon}(t), \tilde{y}^\varepsilon (t)]\big] \big) dt  \Big]\\
       &  +\frac{1}{2}\mathbb E \Big[ h_{xx}(T)[y^{\varepsilon}(T), y^\varepsilon (T)]
       + \widetilde {\mathbb E}\big[  h_{y\mu}(T)(\widetilde{\overline{X}}(T)) [\tilde{y}^{\varepsilon}(T), \tilde{y}^\varepsilon (T)] \big]\Big] +o(\varepsilon).
\end{aligned}
\end{equation}

Next, by the definition of the transposition solution $p$ of \eqref{1-adeq}  
\begin{eqnarray}\label{eq-4.15}
\begin{aligned}
    &\mathbb E\big[\langle p(T),y^{\varepsilon}(T)\rangle\big] =  \mathbb E\Big[\int_0^T \langle y^{\varepsilon}(t), f_x(t)+\widetilde{\mathbb E} \big[\tilde{f}_{\mu}(t)(\overline X(t))\big]\rangle  dt   \Big]\nonumber\\
    &  + \mathbb E\Big[\int_0^T \big(\langle \delta a(t),p(t)\rangle +\langle \delta b(t),q(t)\rangle_{\mathcal L_2^0} \big) \chi_{E_{\varepsilon}}(t)dt\Big]\\
    &  + \mathbb E\Big[\int_0^T\big( \langle p(t),\widetilde{\mathbb E}(\delta a_{\mu}(t)(\widetilde{\overline{X}}(t))\tilde{y}^{\varepsilon}(t))\rangle + \langle q(t),\widetilde{\mathbb E}(\delta b_{\mu}(t)(\widetilde{\overline{X}}(t))\tilde{y}^{\varepsilon}(t))\rangle_{\mathcal L_2^0} \big)dt\Big],
\end{aligned}
\end{eqnarray}
and 
\begin{equation}\label{eq-4.16}
\begin{aligned}
   & \mathbb E\big[\langle p(T),z^{\varepsilon}(T)\rangle\big]  
    =  \mathbb E\Big[\int_0^T \langle z^{\varepsilon}(t), f_x(t)+\widetilde{\mathbb E} \big[\tilde{f}_{\mu}(t)(\overline X(t))\big]\rangle  dt \Big]\\
    & \quad+ \mathbb E\Big[\int_0^T \big(\langle \delta a_x(t)y^{\varepsilon}(t),p(t)\rangle +\langle \delta b_x(t)y^{\varepsilon}(t),q(t)\rangle_{\mathcal L_2^0} \big)\chi_{E_{\varepsilon}}(t)dt\Big]\\
    & \quad + \frac 12 \mathbb E\Big[\int_0^T \langle p(t), a_{xx}(t) [y^{\varepsilon}(t), y^\varepsilon (t) ] 
    + \widetilde{\mathbb E}\big[a_{y\mu}(t)(\widetilde{\overline{X}}(t))[\tilde{y}^{\varepsilon}(t), \tilde{y}^\varepsilon (t)]\big]\rangle dt\Big]\\
    &\quad+ \widetilde{\mathbb E}\big[b_{y\mu}(t)(\widetilde{\overline{X}}(t))[\tilde{y}^{\varepsilon}(t), \tilde{y}^\varepsilon (t)]\big]\rangle_{\mathcal L_2^0} dt\Big]. 
\end{aligned}
\end{equation}

Inserting \eqref{eq-4.15} and \eqref{eq-4.16} into \eqref{eq-3.11-b} and inserting also the terminal condition\\
$p(T)=-h_x(\overline X(T),\mathcal L(\overline X(T)))- \widetilde{\mathbb E} \big[ h_{\mu}(\widetilde{\overline{X}} (T),\mathcal L(\widetilde{\overline{X}}(T)))(\overline{X}(T))\big]$ yields 
\begin{equation}\label{eq-3.11-c}
\begin{aligned}
    0 & \leq   \cJ(u^{\e}(\cdot))-\cJ(\bar u(\cdot))\\
    & = - \mathbb{E}\Big[\int_0^T \frac 12\mathbb{H}_{xx} (t) [y^\varepsilon (t), y^\varepsilon (t)] 
    + \frac 12 \widetilde{\mathbb{E}} \big[ \mathbb{H}_{y\mu} (t)(\widetilde{\overline{X}}(t))[\tilde{y}^\varepsilon (t), \tilde{y}^\varepsilon (t)] \big] + \delta \mathbb{H} (t)\chi_{E_{\varepsilon}}(t)  dt  \big]  \\ 
    &  - \mathbb{E} \Big[ \int_0^T \big( \langle \delta a_x(t) y^\varepsilon (t) , p (t)\rangle 
    + \langle \delta b_x(t) y^\varepsilon (t) , q (t)\rangle_{\mathcal{L}_2^0}\big)\chi_{E_{\varepsilon}}(t)  dt \Big] \\ 
    &   - \mathbb{E} \Big[ \int_0^T \big( \langle \widetilde{\mathbb{E}} \big[ \delta a_\mu (t)(\widetilde{\overline{X}}(t)) \tilde{y}^\varepsilon (t) \big] , p (t)\rangle 
    + \langle \widetilde{\mathbb{E}} \big[ \delta b_\mu (t)(\widetilde{\overline{X}}(t)) \tilde{y}^\varepsilon (t) \big] , q (t)\rangle_{\mathcal{L}_2^0} \big) \chi_{E_{\varepsilon}}(t) dt \Big]  \\ 
    &   +\frac{1}{2}\mathbb E \Big[ h_{xx}(T)[y^{\varepsilon}(T), y^\varepsilon (T)]
       + \widetilde {\mathbb E}\big[  h_{y\mu}(T)(\widetilde{\overline{X}}(T)) [\tilde{y}^{\varepsilon}(T), \tilde{y}^\varepsilon (T)] \big]\Big] +o(\varepsilon).
\end{aligned}
\end{equation}
Furthermore, by the definition of the relaxed transposition solution $P$ of \eqref{ad-eq2} applied to $\varphi_1 = \varphi_2 = y^\varepsilon$, $\xi_1 = \xi_2 = 0$, $u_1 = u_2 = 0$ and $v_1 = v_2 = \delta b \chi_{E_\varepsilon}$, 
\begin{equation}\label{eq-4.17}
\begin{aligned}
    &  \mathbb E \big[\langle P(T)y^{\varepsilon}(T),y^{\varepsilon}(T)\rangle \big]\\
    & =  - \mathbb{E} \Big[\int_0^T \mathbb{H}_{xx}(t) [y^{\varepsilon}(t),y^{\varepsilon}(t)]
     + \widetilde{\mathbb E} \big[ \widetilde{\mathbb H}_{y\mu }(t)(\overline X(t))[y^{\varepsilon}(t),y^{\varepsilon}(t)]\big] dt \Big] \\
  &  \quad  + \mathbb{E} \Big[ \int_0^T \big( \langle P(t) b_{x}(t)  y^{\varepsilon}(t),\delta b(t)\rangle_{\mathcal{L}_2^0}\big)\chi_{E_{\varepsilon}}(t) dt \Big]  \\
  &  \quad  +\mathbb{E} \Big[ \int_0^T \big( \langle P(t)\delta b(t),  b_{x}(t)  y^{\varepsilon}(t)  
  + \delta b(t)\rangle_{\mathcal{L}_2^0}\big) \chi_{E_{\varepsilon}}(t)dt \Big]  \\
  &  \quad  +\mathbb{E}\Big[ \int_0^T \big(\langle \delta b(t),\hat{Q}^{(t)}(0,0,\delta b(t)\chi_{E_{\varepsilon}})(t)\rangle_{\mathcal{L}_2^0}\\
  &\hspace{2cm}+ \langle Q^{(t)}(0,0,\delta b(t)\chi_{E_{\varepsilon}}(t))(t),
 \delta b(t)\rangle_{\mathcal{L}_2^0}\big) \chi_{E_{\varepsilon}}(t)dt \Big]. 
\end{aligned}
\end{equation}
Inserting \eqref{eq-4.17} into \eqref{eq-3.11-c}, using 
$$
\mathbb{E} \widetilde{\mathbb E} \Big[ \widetilde{\mathbb H}_{y\mu}(t)(\overline X(t))
[y^{\varepsilon}(t),y^{\varepsilon}(t)]\Big]
= \mathbb{E}\widetilde{\mathbb E} \Big[\mathbb H_{y\mu}(t)(\widetilde{\overline{X}}(t)) 
[\tilde{y}^{\varepsilon}(t), \tilde{y}^{\varepsilon}(t)]\Big] , 
$$
similarly also for the term $h_{y\mu}$, and using the terminal condition 
$P(T)=- h_{xx}(T)+\widetilde{\mathbb{E}}\big[\tilde{h}_{y\mu}(T)(\widetilde{\overline{X}}(T))\big]$,  
we have 

\begin{equation}\label{eq-4.23}
\begin{aligned}
     0 & \leq   \cJ(u^{\e}(\cdot))-\cJ(\bar u(\cdot))\\
       & =   -\mathbb{E}\Big[\int_0^T  \big( \delta \mathbb{H} (t) + \langle \delta a_x(t) y^\varepsilon (t) , p (t)\rangle + \langle \delta b_x(t) y^\varepsilon (t) , q (t)\rangle_{\mathcal{L}_2^0} \big)\chi_{E_{\varepsilon}}(t) dt \Big] \\ 
    & \quad  - \mathbb{E} \Big[ \int_0^T  \big( \langle \widetilde{\mathbb{E}} \big[ \delta a_\mu (t)(\widetilde{\overline{X}}(t)) \tilde{y}^\varepsilon (t) \big] , p (t)\rangle 
    + \langle \widetilde{\mathbb{E}} \big[ \delta b_\mu (t)(\widetilde{\overline{X}}(t)) \tilde{y}^\varepsilon (t) \big] , q (t)\rangle_{\mathcal{L}_2^0}\big) \chi_{E_{\varepsilon}}(t)  dt \Big] \\ 
  & \quad  - \frac 12 \mathbb{E} \Big[ \int_0^T \big(\langle P(t) b_{x}(t)  y^{\varepsilon}(t),\delta b(t)\rangle_{\mathcal{L}_2^0}\big)\chi_{E_{\varepsilon}}(t) dt \Big]  \\
  &  \quad - \frac 12 \mathbb{E} \Big[ \int_0^T \big( \langle P(t)\delta b(t),  b_{x}(t)  y^{\varepsilon}(t)  
  + \delta b(t)\rangle_{\mathcal{L}_2^0}\big) \chi_{E_{\varepsilon}}(t)dt \Big] \\
  &  \quad  - \frac 12 \mathbb{E}\Big[ \int_0^T \big(\langle \delta b(t),\hat{Q}^{(t)}(0,0,\delta b(t)\chi_{E_{\varepsilon}})(t)\rangle_{\mathcal{L}_2^0}\\
 &\hspace{2.5cm}+ \langle Q^{(t)}(0,0,\delta b(t)\chi_{E_{\varepsilon}}(t))(t),
 \delta b(t)\rangle_{\mathcal{L}_2^0}\big) \chi_{E_{\varepsilon}}(t)dt \Big]\Big\}  +o(\varepsilon).
\end{aligned}
\end{equation}
Now let's get rid of the terms involving $y^{\varepsilon}(t)$. By the boundedness of $a_x$ and $b_x$, together with the definition of $(p(\cdot),q(\cdot))$ and \eqref{eq-4.7}, we see that as $\varepsilon\to 0$, 
\begin{equation}\label{eq-4.24}
\begin{aligned}
    &  \Big|\mathbb E\Big[\int_0^T \big(\langle \delta a_x(t)y^{\varepsilon}(t),p(t)\rangle +\langle \delta b_x(t)y^{\varepsilon}(t),q(t)\rangle_{\mathcal{L}_2^0} \big) \chi_{E_{\varepsilon}}(t)dt \Big] \Big|\\
    & \leq  \cC \mathbb E\Big[ |E_{\varepsilon}|^{\frac{1}{2}}\Big(\int_{E_{\varepsilon}(t)}|p(t)|^2dt\Big)^{\frac{1}{2}}\sup\limits_{t\in[0,T)}|y^{\varepsilon}(t)|+|E_{\varepsilon}|^{\frac{1}{2}}\Big(\int_{E_{\varepsilon}(t)}|q(t)|^2dt\Big)^{\frac{1}{2}}\sup\limits_{t\in[0,T)}|y^{\varepsilon}(t)|\Big]\\
     & \leq  \cC\varepsilon^{\frac{1}{2}} \Big(\mathbb E \Big[\int_{E_{\varepsilon}(t)}|p(t)|^2dt \Big]^{\frac{1}{2}} 
     + \mathbb E\Big[ \int_{E_{\varepsilon}(t)}|q(t)|^2dt \Big]^{\frac{1}{2}}\Big) 
     \mathbb E\big[\sup\limits_{t\in[0,T)}|y^{\varepsilon}(t)|^2\big]^{\frac{1}{2}}\\
     & \leq  \cC\varepsilon \Big( \mathbb E\Big[\int_{E_{\varepsilon}(t)}|p(t)|^2dt\Big]^{\frac{1}{2}} 
     + \mathbb E \Big[\int_{E_{\varepsilon}(t)}|q(t)|^2dt\Big]^{\frac{1}{2}}\Big)=o(\varepsilon).
\end{aligned}
\end{equation}
Similarly, 
{\small
\begin{equation}\label{eq-4.25}
\begin{aligned}
    &  \Big|\mathbb{E} \Big[ \int_0^T \big( \langle P(t) b_x(t)y^{\varepsilon}(t), \delta b(t)\rangle_{\mathcal{L}_2^0} +  \langle P(t)\delta b(t),b_{x}(t) y^{\varepsilon}(t) \rangle_{\mathcal{L}_2^0}\big) \chi_{E_{\varepsilon}}(t)dt \Big] \Big|\\
    & \leq  \cC\mathbb E\Big[|E_{\varepsilon}|^{\frac{1}{2}}\Big(\int_{E_{\varepsilon}(t)}|P(t)|^2dt\Big)^{\frac{1}{2}} \sup\limits_{t\in[0,T)}|y^{\varepsilon}(t)|\Big]\\
     & \leq  \cC\varepsilon^{\frac{1}{2}} \mathbb E \Big[ \int_{E_{\varepsilon}(t)}|P(t)|^2dt\Big]^{\frac{1}{2}}\mathbb E \Big[ \sup\limits_{t\in[0,T)}|y^{\varepsilon}(t)|^2\Big]^{\frac{1}{2}}\\
     & \leq  \cC\varepsilon \mathbb E\Big[ \int_{E_{\varepsilon}(t)}|P(t)|^2dt\Big]^{\frac{1}{2}} =o(\varepsilon).
\end{aligned}
\end{equation}
}
Next, by the boundedness of $a_{\mu}$ and $b_{\mu}$, we get from Corollary \ref{cor3.1} 
\begin{equation}\label{eq-4.26}
\begin{aligned}
  &  \Big| \mathbb E\Big[\int_0^T\big( \langle p(t),\widetilde{\mathbb E}\big[\delta a_{\mu}(t)(\widetilde{\overline{X}}(t))\tilde{y}^{\varepsilon}(t)\big]\rangle + \langle q(t),\widetilde{\mathbb E}\big[\delta b_{\mu}(t)(\widetilde{\overline{X}}(t))\tilde{y}^{\varepsilon}(t)\big]\rangle\big) \chi_{E_\varepsilon} (t) dt \Big]\Big|  \\
    & \leq  \cC\mathbb E\Big[|E_{\varepsilon}|^{\frac{1}{2}}\Big(\int_{E_{\varepsilon}(t)}|p(t)|^2dt\Big)^{\frac{1}{2}}\Big(\int_{E_{\varepsilon}(t)}|\widetilde{\mathbb E}\big[\delta a_{\mu}(t)(\widetilde{\overline{X}}(t))\tilde{y}^{\varepsilon}(t)\big]|^2dt\Big)^{\frac{1}{2}}\Big]\\
    & +\cC\mathbb E\Big[|E_{\varepsilon}|^{\frac{1}{2}}\Big(\int_{E_{\varepsilon}(t)}|q(t)|^2dt\Big)^{\frac{1}{2}}\Big(\int_{E_{\varepsilon}(t)}|\widetilde{\mathbb E}\big[\delta b_{\mu}(t)(\widetilde{\overline{X}}(t))\tilde{y}^{\varepsilon}(t)\big]|^2dt\Big)^{\frac{1}{2}}\Big)\\
      & \leq  \cC\varepsilon^{\frac{1}{2}}\mathbb E\Big[ \int_{E_{\varepsilon}(t)}|p(t)|^2dt\Big)]^{\frac{1}{2}}\Big(\int_0^T \mathbb{E} \big[ |\widetilde{\mathbb E}\big[\delta a_{\mu}(t)(\widetilde{\overline{X}}(t))\tilde{y}^{\varepsilon}(t)\big]|^2 \big] dt \Big)^{\frac{1}{2}}\\
    &   +\cC\varepsilon^{\frac{1}{2}} \mathbb E\Big[\int_{E_{\varepsilon}(t)}|q(t)|^2dt\Big]^{\frac{1}{2}}\Big(\int_0^T \mathbb E\big[ |\widetilde{\mathbb E}\big[\delta b_{\mu}(t)(\widetilde{\overline{X}}(t))\tilde{y}^{\varepsilon}(t)\big]|^2 \big] dt\Big)^{\frac{1}{2}}\\
     & \leq  \cC\varepsilon \Big\{\Big(\mathbb E \Big[ \int_{E_{\varepsilon}(t)}|p(t)|^2dt\Big]^{\frac{1}{2}} + \mathbb E\Big[ \int_{E_{\varepsilon}(t)}|q(t)|^2dt\Big]^{\frac{1}{2}} \Big\}\\
     & = o(\varepsilon).
\end{aligned}
\end{equation}
Note that the terms containing $\hat{Q}^{(t)}(0,0,\delta b(t)\chi_{E_{\varepsilon}})(t)$ and $Q^{(t)}(0,0,\delta b(t)\chi_{E_{\varepsilon}})(t)$ admit the same properties as in the standard non-McKean-Vlasov SPDE case (discussed in \cite[Chapter 9]{Lu2014}), therefore,
\begin{eqnarray}\label{eq-4-14}
  & &  \Big|\mathbb{E}\Big[ \int_0^T \langle \delta b(t),\hat{Q}^{(t)}(0,0,\delta b(t)\chi_{E_{\varepsilon}})(t)\rangle_{\mathcal{L}_2^0} \chi_{E_{\varepsilon}}(t) dt \Big]\Big|\nonumber\\
   & & \leq \big|\hat{Q}^{(0)}(0,0,\delta b(t)\chi_{E_{\varepsilon}}(\cdot))\big|_{L_\mathbb{F}^2(0,T;L^{4/3}(\Omega;\mathcal{L}_2^0))}\mathbb{E}\Big[ \Big(\int_0^T |\delta b(t)\chi_{E_{\varepsilon}}(t)|_{\mathcal{L}_2^0}^4dt\Big)^{1/2}\Big]^{1/2}\nonumber\\
  & & \leq C|\delta b(\cdot)\chi_{E_{\varepsilon}}(\cdot) |_{L_\mathbb{F}^2(0,T;L^4(\Omega,\mathcal{L}_2^0))} \mathbb{E}\Big[\Big( \int_0^T |\delta b(t)\chi_{E_{\varepsilon}}(t)|_{\mathcal{L}_2^0}^4 dt \Big)^{1/2} \Big]^{1/2}\\
  & & =o(\varepsilon), \nonumber
\end{eqnarray}
and 
\begin{eqnarray}\label{eq-4-15}
\Big|\mathbb{E}\Big[\int_0^T\langle Q^{(t)}(0,0,\delta b(t)\chi_{E_{\varepsilon}}(t))(t),
 \delta b(t)\rangle_{\mathcal{L}_2^0}\chi_{E_{\varepsilon}}(t)dt\Big]\Big| 
 = o(\varepsilon)
\end{eqnarray}

Combining \eqref{eq-4.23}-\eqref{eq-4-15}, we get
\begin{eqnarray}\label{eq-4.28}
    0 & \leq  & \cJ(u^{\e}(\cdot))-\cJ(\bar u(\cdot))\nonumber\\
    & =& -\mathbb E\Big[\int_0^T \delta \mathbb H(t)\chi_{E_{\varepsilon}}(t)+ \frac 12\langle P(t)\delta b(t),\delta b(t)\rangle \chi_{E_{\varepsilon}}(t)dt\Big] + o(\varepsilon).
\end{eqnarray}
Finally, from Lebesgue differentiation theorem, we deduce from \eqref{eq-4.28}, for any $u\in U$, and
$a.e.\  t\in [0,T]$, it holds $\mathbb P$-almost surely that 
\begin{eqnarray*}
    0& \leq& \mathbb H(t,\overline X(t), \mathcal L (\overline X(t)), \bar u(t), p(t),P(t))- \mathbb H(t,\overline X(t), \mathcal L (\overline X(t)), u, p(t),P(t))\\
    & & -\frac{1}{2} \big\langle P(t)\big (b(t)-b(t,\overline X(t),\mathcal L(\overline X(t)),u)\big ),b(t)-b(t,\overline X(t),\mathcal L(\overline X(t)),u)\big\rangle_{\mathcal L_2^0}.
\end{eqnarray*}
This proves Theorem \ref{thm4.1}.

\section{Example}
In this section, we provide an example to illustrate the application of our McKean-Vlasov SPDE control problem. 

Let $G\subset\dbR^n$ be a bounded domain with the smooth boundary
$\pa G$.  Let $H=L^2(G)$ and $U$ be a bounded closed subset of
$L^2(G)$. Consider the following stochastic parabolic equation:
\begin{equation}\label{system3-a}
\begin{cases}
\ds   dy =\big(\Delta y + \tilde a(t,y,\mathcal L(y),u)\big) dt+\tilde b(t,y,\mathcal L(y),u) dW(t) &\textup{in }  (0,T]\times G,\\
\ns\ds   y =0 &\textup{on }   (0,T]\times \pa G,\\
\ns\ds   y(0)=\xi(x) &\textup{in }  G,
\end{cases}
\end{equation}
where $ x \in L^2(G)$, $u(\cdot)\in \cU[0,T]$, $\xi\in L^2_{\mathcal F_0}(\Omega;L^2(G))$,

and the following
cost functional:
\begin{equation}\label{cost3}
\mathcal{J}(x;u(\cdot))= \mathbb{E}\Big(\int_0^T\int_G \tilde
f(t,y(t),\mathcal L(y(t)),u(t))dxdt+\int_G \tilde h(y(T),\mathcal L(y(T)))dx \Big),
\end{equation}
with 
$\tilde a, \tilde b$ and
$\tilde f,\tilde h$  satisfy the following condition:

\ss

\no{\bf (C)} {\it For $\f=\tilde a,\tilde b, \tilde f, \tilde h$, and for all $(t,u)\in [0,T]\times U$,

(i) $\tilde a(t,\cdot,\cdot,u)\in C_b^{2,2}(H\times \mathcal P_2(H); H)$,

(ii) $\tilde b(t,\cdot,\cdot,u)\in C_b^{2,2}(H\times \mathcal P_2(H); \mathcal L_2^0)$,

(iii) $\tilde f(t,\cdot,\cdot,u)\in C_b^{2,2}(H\times \mathcal P_2(H); \mathbb R)$,

(iv) $\tilde h(\cdot,\cdot)\in C_b^{2,2}(H\times \mathcal P_2(H); \mathbb R)$.}

\ss

Under {\bf (C)}, it is easy to see that ({\bf
A}) hold. Then we know that all assumptions in Theorem
\ref{thm4.1} are fulfilled. This formulation can be applied to neuroscience. In particular, \eqref{system3-a}-\eqref{cost3} provide a spatially extended analogue of the finite-dimensional McKean-Vlasov SDE control problem studied in \cite{Hocquet2021}.

\section*{Acknowledgements}

LC and WS acknowledge support from DFG CRC/TRR 388 ’Rough Analysis, Stochastic
Dynamics and Related Fields’, Project A10.


\end{document}